\newtheorem{theorem}{Theorem}
\newtheorem{conjecture}[theorem]{Conjecture}
\newtheorem{proposition}[theorem]{Proposition}
\newtheorem{remark}[theorem]{Remark}
\newtheorem{problem}[theorem]{Problem}
\newtheorem{definition}[theorem]{Definition}
\newtheorem{example}[theorem]{Example}
\newtheorem*{definition*}{Definition}
\newtheorem*{example*}{Example}
\newtheorem{corollary}[theorem]{Corollary}
\newcommand{\mscomm}[1]{\begingroup\textbf{\color{green}#1}\endgroup}
\definecolor{drot}{rgb}{0.7,0,0.1}
\newcommand{\cre}[1]{{\color{drot} #1}}
\newcommand{\bd}{\mathbf d}
\newcommand{\bv}{\mathbf v}
\newcommand{\bn}{\mathbf n}
\newcommand{\bnt}{\tilde{\mathbf n}}
\begin{document}

\begin{frontmatter}

\title{Characterizing envelopes of moving rotational cones and applications in CNC machining}

	
\author[HSE,RAS]{Mikhail Skopenkov\corref{cor1}}	
\ead{mikhail.skopenkov@gmail.com}
\author[Harbin]{Pengbo Bo}
\ead{pbbo@hit.edu.cn}
\author[BCAM]{Michael Barto\v{n}}
\ead{mbarton@bcamath.org}
\author[KAUST]{Helmut Pottmann}	
\ead{helmut.pottmann@kaust.edu.sa}
\cortext[cor1]{Corresponding author}

\address[HSE]{National Research University Higher School of Economics, Faculty of Mathematics, Usacheva 6, Moscow 119048, Russia}
\address[RAS]
{Institute for Information Transmission Problems of the Russian Academy of Sciences, Bolshoy Karetny 19 bld.1, Moscow 127051, Russia}
\address[Harbin]{School of Computer Science and Technology,
Harbin Institute of Technology, West Wenhua Str. 2, 264209 Weihai, China}
\address[BCAM]{BCAM -- Basque Center for Applied Mathematics,  Alameda de Mazarredo 14, 48009 Bilbao, Basque Country, Spain}
\address[KAUST]{ King Abdullah University of Science and Technology, P.O. Box 2187, 4700 Thuwal, 23955-6900, Kingdom of Saudi Arabia}

\begin{abstract}

Motivated by applications in CNC machining, we provide a characterization of surfaces which are enveloped by a one-parametric family of congruent rotational cones.  As limit cases, we also address developable surfaces and ruled surfaces.
The characterizations are higher order nonlinear PDEs generalizing the ones by Gauss and Monge
for developable surfaces and ruled surfaces, respectively.
The derivation includes results on local approximations of a surface by cones of revolution, which are expressed
by contact order in the space of planes. These results are themselves of interest in geometric computing, for example in cutter selection and positioning for flank CNC machining.

\end{abstract}

\begin{keyword}
envelope of cones \sep  Laguerre geometry \sep ruled surface \sep higher-order contact \sep flank CNC machining
\end{keyword}

\end{frontmatter}

\section{Introduction}\label{sec-intro}

Various manufacturing technologies, such as hot wire cutting, electrical discharge machining or computer numerically controlled (CNC) machining are based on a moving \emph{tool}, the active part of which can be a curve or a surface. They generate surfaces which are swept by a simple curve, e.g. a straight line segment or a circular arc, or are enveloped by a simple surface. The latter
case mostly refers to CNC machining where the moving tool is part of a rotational surface (sphere, rotational cylinder, rotational cone, torus). In order to produce a given shape with such a manufacturing process, one has to approximate the target shape by surfaces which are generated by a moving tool of the available type. Depending on the application,  such an approximation has to be highly accurate and, for example in the case of CNC machining may have to meet a numerical tolerance of a few micrometers for objects of the size of tens of centimeters. Such high precision pushes demands on the path-planning algorithms which greatly benefit from a higher order analysis of the contact between the reference surface and the surface generated by
the moving tool.

With the flank CNC machining application in mind, we present such an analysis for envelopes of rotational cones. In order to obtain contact of order $n$ between an envelope of a moving rotational
cone and a design surface $\Phi$, it is not necessary that each position of the cone has contact of order $n$ with
$\Phi$, when viewing the surfaces as point sets. This is obvious anyway, since 2nd order contact between  a cone and a surface $\Phi$ would already imply
vanishing Gaussian curvature of $\Phi$, i.e., a developable surface $\Phi$. One needs contact of order $n$ between the cone and the surface, viewed in the space of planes.  It is related to the fact that a
cone possesses just a
one-parameter family of tangent planes. This indicates the advantage of using a geometry, in which the (oriented) planes in Euclidean space
are the basic elements. Therefore, we use Laguerre geometry and work in a point model of the set of oriented planes, known as the \emph{isotropic model} of Laguerre geometry. There, a cone appears
as a curve (an isotropic circle) and not as a surface. That is, the analysis of cone-surface
contact is transferred to the study of a curve-surface contact,
which is conceptually simpler.

When we speak of higher order contact between a surface generated by a conical milling tool and a reference
surface, it is important to note the following: Second order contact, also referred to as \emph{osculation},
means that the surfaces locally penetrate tangentially. Thus, this case is not directly suitable for CNC
machining, but may still be useful for initial estimates of good tool positions. However, third order contact, so-called \emph{hyperosculation},  is locally penetration-free in the very neighborhood of the contact point and therefore very well-suited for CNC machining, in particular
for initialization of optimization algorithms which aim at high-precision machining.

\subsection{Contributions and overview}

Our main contribution is a careful analysis of plane-based higher order contact between cones of
revolution and a given reference surface. This leads to a nonlinear PDE which characterizes exact
envelopes of congruent rotational cones (see Theorem~\ref{cor-cone-envelope}). From a practical perspective, this means that we can
detect the (rare) cases in which a surface can be milled exactly in a single path by flank milling
with an appropriate conical tool, provided that this tool motion is collision free and accessible.
Probably more importantly, a computational approach to locally well fitting tool positions is very helpful
for the initialization of numerical optimization algorithms for high-precision tool motion planning.
On our way towards the characterization of envelopes of moving rotational cones, we discuss other
special types of surfaces as well.

The paper is structured as follows: We discuss relevant previous work in Section~\ref{ssec-previous}. Section \ref{sec-developable} derives a PDE that characterizes the graph of a bivariate function as a developable surface (Theorem~\ref{th-developable}).
Section~\ref{sec-ruled} then extends this characterization to all ruled surfaces (Theorem~\ref{th-ruled}). To extend to envelopes of cones, in Section~\ref{sec-model} we introduce the isotropic model of Laguerre geometry and discuss the contact order between a developable surface and a doubly curved surface, expressed in the space
of planes. Section~\ref{sec-conics} characterizes envelopes of congruent rotational cones in the isotropic model and formulates conditions on second order and third order plane-based contact. This is the basis for proving that a certain PDE characterizes envelopes of
congruent rotational cones (Section~\ref{sec-cones},  Theorem~\ref{cor-cone-envelope}). In Section~\ref{sec:cylinder} we address the
limit case of envelopes of congruent rotational cylinders (Corollary~\ref{cor-offset-ruled}), and for completeness we also discuss envelopes
of spheres in Section~\ref{sec:sphere} (Corollary~\ref{cor-pipe}).  Section~\ref{sec-results} shows examples of hyperosculating cone positions and its application to flank CNC machining. Finally, Section~\ref{sec-conclu} concludes the paper and indicates directions for future research.

\subsection{Previous work}\label{ssec-previous}


\medskip

\textbf{Geometry.} Higher order contact between curves and/or surfaces has been well-studied in the past, see e.g. \cite{kennedy:1994-PlanrCrvs,montaldi:1986-HigherOrderContact,maekawa:1999-SrfSrfIntrs}. It appears, for example, in surface-surface intersection: Using marching methods is straightforward for transversal intersections, however, when the surfaces in question
have higher order contact, the computation of the intersection curve is quite complex \citep{maekawa:1999-SrfSrfIntrs}. Higher order contact between a circle and a surface in Euclidean 3-space is studied in \cite{montaldi:1986-HigherOrderContact}, in particular the existence of  circles with 5-th order contact at the umbilical points of a surface.

Another class of relevant research deals with the approximation of general free-form (NURBS) surfaces by ruled surfaces \cite{hoschek:1998-InterpolRuled,VanSosin-2019-LineCutting}, or even developable surfaces \cite{pottwall:2001,rabinovich:2018-DiscreteGeoNets,solomon:2012-FlexibleDevel,elber:2006-PiecewiseDevel,Tang-2016-Develop}. For simple geometries, the process of approximation can be even \emph{interactive}, while the design of very complex shapes requires many rounds of optimization and is still beyond real-time performance \cite{Tang-2016-Develop}.


With the blossom of modern free-form architecture, another type of research appeared recently. A curved geometry on a large scale requires fine approximation in order to, for example, create panels,  molds for their
production and support structures. This requires segmentation of the whole complex free-form surface into manufacturable patches, while minimizing the cost of the whole manufacturing process \cite{eigensatz:2010-Paneling}. To this end, another promising direction is to use to simple, ideally congruent, curved geometric entities such as circular arcs \cite{Snakes-2013,bo:2011-CAS} in a repetitive manner.

 \begin{figure}[!tb]
\vrule width0pt\hfill
\begin{overpic}[width=0.48\columnwidth]{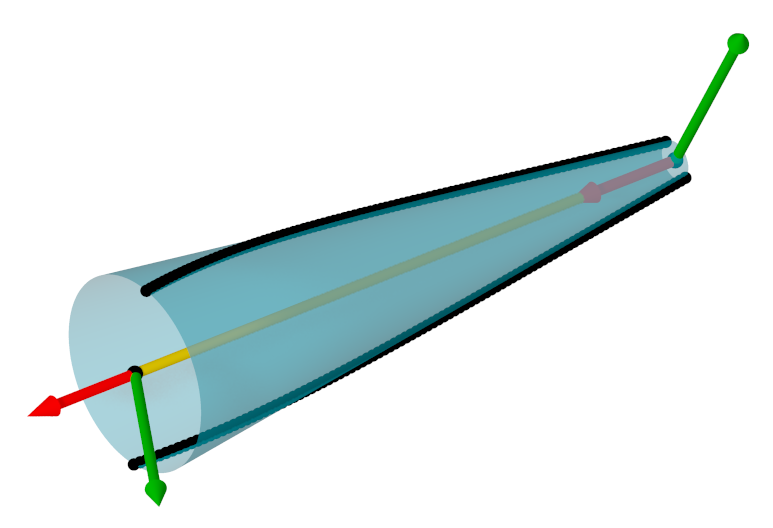}
{\small
  \put(0,-2){(a)}
  \put(55,0){\fcolorbox{gray}{white}{\includegraphics[width=0.19\textwidth]{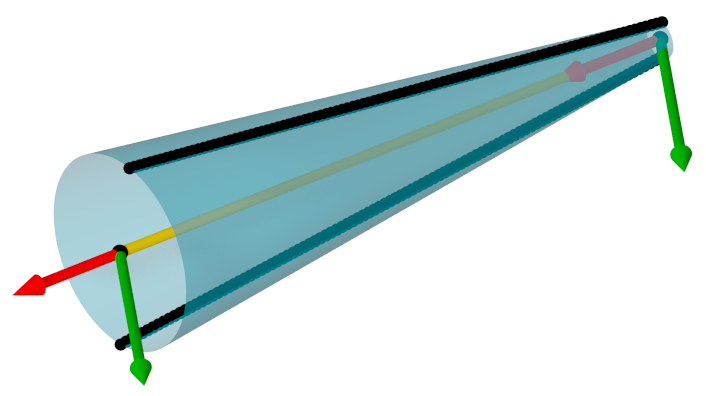}}}
  }
\end{overpic}
\hfill
\begin{overpic}[width=0.48\columnwidth]{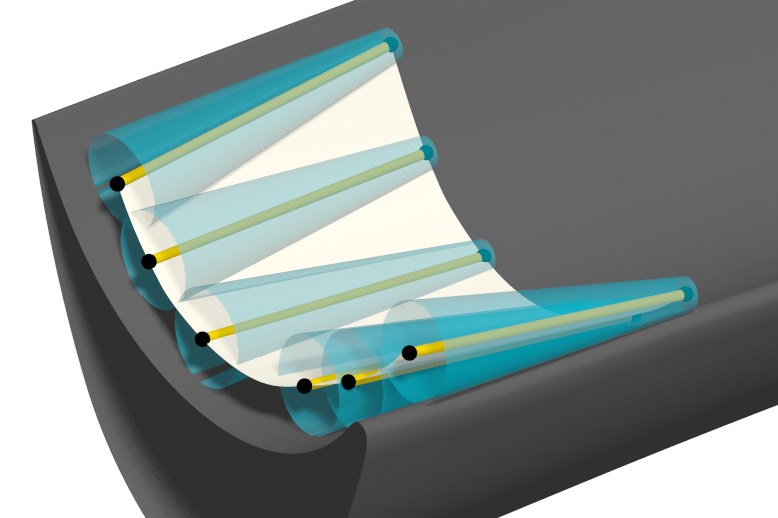}
{\small
  \put(0,-2){(b)}
  }
\end{overpic}
\hfill\vrule width0pt\\
\vspace{-12pt}
\caption{Flank milling with a conical tool. (a) The rotation of the tool about its axis generates a truncated cone (transparent) whose
instantaneous motion is determined by a pair of velocity vectors (green); their projections onto the axis of the cone are two identical vectors (red). The contact curve with the envelope is known as the \emph{characteristic} (black) and is in general position an algebraic curve of degree four. Bottom framed: For special instantaneous motions, such as translation, the characteristic degenerates to a pair of straight lines. (b) In  5-axis flank CNC machining, the goal is to move the tool tangentially to the reference surface (black), that is, to approximate the input surface by an envelope of the moving truncated cone.}
\label{fig:CNC}
\end{figure}

\textbf{CNC machining.} The problem of approximating a general free-form surface by an envelope of a
moving simple object (e.g. a quadric) has been inspired by applications in 5-axis CNC machining.
We refer to the very final stage of 5-axis CNC machining, known as \emph{flank machining}, where the tool, typically a cone or a cylinder, moves tangentially along the to-be-manufactured surface, having a contact with the surface -- theoretically -- along a whole curve, see Fig.~\ref{fig:CNC}.



In the case of 5-axis flank milling with \emph{cylindrical} tools, the tool path-finding problem can be alternatively formulated as approximating the offset surface of the input surface (offset by the radius of the tool) by a set of ruled surfaces. Therefore a lot of literature is devoted to this equivalent formulation, see e.g. \cite{Hsu-2008-Ruled,Gong-2005,li-fm-2006,Menzel-2004-Triple,Redonnet-1998,sprott2008-cmo,Wang-2014,Xu-2017-CuttingForce}
and the fact that a free-form surface can be approximated by ruled surfaces arbitrarily well \cite{Elber-1997}. However, this approximation of a general, doubly-curved surface by ruled surfaces within fine tolerances typically requires an excessive number of patches \cite{Elber-1997}. On the other hand, negatively
curved surfaces can be approximated even by a reasonably small number of smoothly joining ruled surface strips \cite{floery-aag-2012}.

In the case of approximation with \emph{conical} tools, the literature is a lot more sparse.
One can machine a ruled surface perfectly with a cylindrical or conical tool only if the  tangent plane along the ruling is constant, i.e., the surface is developable. For a general (non-developable) ruled surface, an approximation approach is necessary \cite{li-fm-2006}. For general free-form surfaces, an alternative approach is to use an approximation of the surface's distance function and look for directions in which its Hessian vanishes \cite{IniConical-2017}.  Along these 3D directions, the distance from the reference surface changes linearly and therefore provides good initial candidates for the milling axis positions.
%

Another important issue is the accessibility of the surface by a machining tool. A conservative estimate is proposed in the context of 5-axis ball-end milling \cite{Ezair-2018-Collision}. The admissible directions of the tool are encoded using normal bounding cones which enables to quickly find whole volumes in the configuration space that correspond to possible tool paths. As a result, there is no need to compute accessibility for individual cutter contact points which brings significant computational savings.

Real-life manufacturing of free-form surfaces using conical tools is conducted in \cite{Calleja-2018-FlankMillConicalTools}. Using the initialization strategy for flank milling with conical tools introduced in \cite{IniConical-2017}, one quickly finds initial motions (ruled surfaces) of the milling axis and reveals the parts of free-form surfaces that can be efficiently approximated by conical envelopes within very fine machining tolerances. Consequently, high accuracy leads to a reduced machining time as only few sweeps are needed to cover large portions of the surface \cite{Calleja-2018-FlankMillConicalTools}.

Another strong stream of research deals with \emph{curved} tools and especially \emph{barrels} \cite{li-fm-2008,Luo-2016-Barrel,Urbikain-2017-Barrel}. Barrel tools are shown to fit well free-form surfaces, especially in concave regions where the principal curvatures of the tool match their counterparts of the surface. The most recent research focuses on\emph{ custom-shaped tools}. That is, not only the 3D motion of the tool, but also the shape itself are the unknowns in  path-planning \cite{Gong-2009,Yu-2017-OptimizingSize,Yu-2017-ShapeOpt,Zheng-2012-CutterSizeOpt,zhuetal-2012,zhu-2010-goo}. Typically, the initial milling trajectory is a part of the input or is indicated by the user. Recent research focuses on  automatic path initialization for 5-axis flank milling \cite{IniGeneral-2018,IniConical-2017}. For a specific shape of the milling tool (conical or doubly curved), an automatic initialization of the tool's motion can be achieved by integrating the admissible multi-valued vector field that corresponds to directions in which the point-surface distance changes according to the prescribed shape of the milling tool (prescribed by a meridian curve) \cite{IniGeneral-2018}.

On the conceptual level, our research in this paper is closely related to \cite{CurvCatering1-1993,CurvCatering2-1993,MillingCircles-2015}, which concerns research on 5-axis  flat-end milling with cylindrical tools, where the bottom circle is posed in third order contact (hyperosculation) with the reference surface. In this work, however, we have to deal with higher order contact in the space of planes, i.e.,
we look for hyperosculation between a special conic and a curved surface in the isotropic model of Laguerre geometry, and not in Euclidean space.


\section{Developable surfaces}\label{sec-developable}

Developable surfaces can be mapped isometrically into the plane and therefore appear when working with thin sheets of materials which are much more easily bent than stretched, such as paper, certain plastics, and sheet metal. We first treat this well-known class of surfaces and in this way introduce to our approach at hand of a well-known case.

There are several properties of developable surfaces which may serve as equivalent definitions. For instance, they are locally envelopes of one-parameter families of planes. Hence, we use the following characterization of a developable surface: a tangent plane touches the surface along a straight line segment (\emph{ruling}); see Fig.~\ref{fig:DevelSrf}.

We derive a well-known PDE for developable surfaces (see Theorem~\ref{th-developable} below) which is equivalent to
expressing vanishing Gaussian curvature \cite[Example~5 in \S3.3]{docarmo:1976:DG}.  We take the surface to be the graph of a smooth function $f$. Then the required PDE is obtained by differentiation along a ruling. Conversely, given $f$ satisfying the PDE, the rulings are reconstructed as follows. First we get the ruling directions from the derivatives of $f$. Then we integrate the resulting vector field. Finally, we use the PDE to ensure that the integral curves are actually line segments and the tangent planes are constant along the segments. Such reconstruction works for a generic point on the surface.

\subsection*{Derivation of the PDE}
Assume that a $C^3$ function $f(x,y)$ is defined in an open disk and a plane $z=ax+by+c$ touches the graph along a line segment $(x+ut, y+vt, z+wt)$, where $t$ runs through a segment $(-\epsilon,\epsilon)$ and $a,b,c,u,v,w\in\mathbb{R}$ are fixed. Then $f_x(x+ut,y+vt)=a$ and $f_y(x+ut,y+vt)=b$ identically. Differentiating with respect to $t$ we get $f_{xx}u+f_{xy}v=f_{xy}u+f_{yy}v=0$. Since $(u,v)\ne (0,0)$ it follows that
$$f_{xx}f_{yy}- {f_{xy}}^2= 0,$$
which is nothing but vanishing of the Gaussian curvature $K=\frac{f_{xx}f_{yy}-{f_{xy}}^2}{(1+f_x^2+f_y^2)^2}$; see e.g.,
\cite[Example~5 in \S3.3]{docarmo:1976:DG}.


 \begin{figure}[!tb]
\vrule width0pt\hfill
\begin{overpic}[width=0.48\columnwidth]{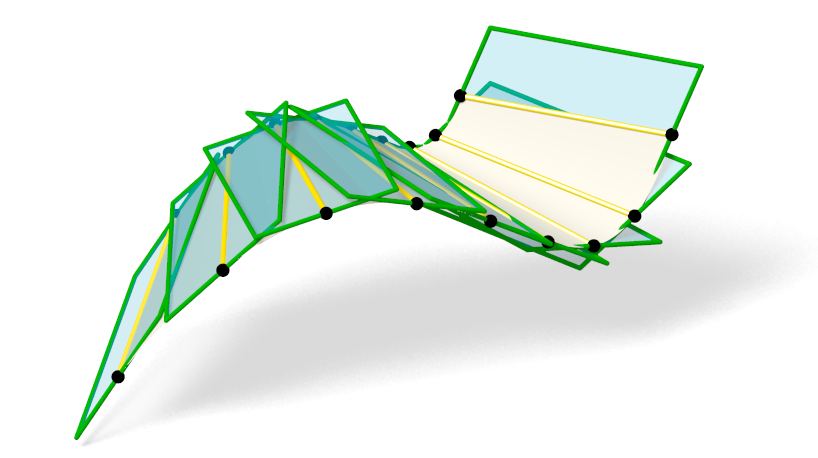}
{\small
  \put(0,-2){(a)}
  }
\end{overpic}
\hfill
\begin{overpic}[width=0.48\columnwidth]{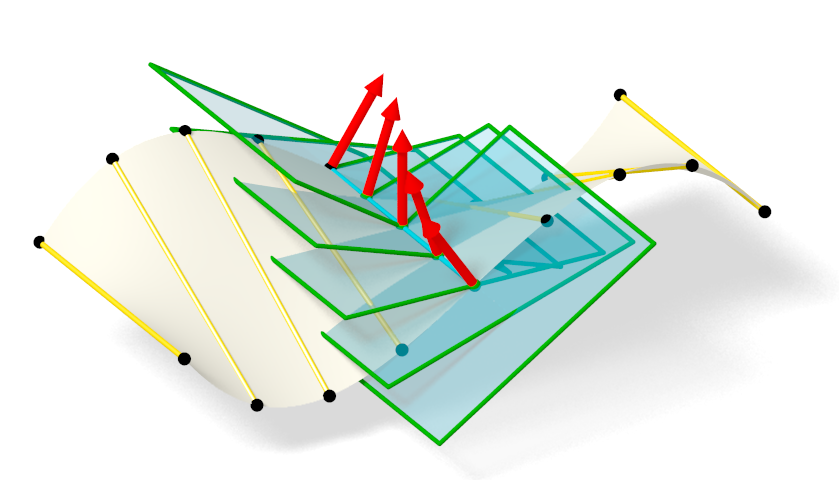}
{\small
  \put(0,-2){(b)}
  }
\end{overpic}
\hfill\vrule width0pt\\
\vspace{-12pt}
\caption{(a) A developable ruled surface is an envelope of a one-parameter family of planes. Every plane touches the surface along the whole ruling. In contrast, (b) tangent planes vary along a generic ruling of a non-developable ruled surface.}
\label{fig:DevelSrf}
\end{figure}


\subsection*{Reconstruction of the rulings}
To show that the resulting PDE implies that the surface is developable, first consider a point where $f_{xx}f_{yy}\ne 0$. By the continuity, $f_{xx}f_{yy}\ne 0$ in a neighborhood of the point.
%
Consider the vector field $(u,v)=(f_{yy},-f_{xy})/\sqrt{f_{yy}^2+f_{xy}^2}$ in this neighborhood. It is $C^1$, if $f$ is $C^3$. Integrate the field: by the Picard-Lindel\"of theorem on the existence of integral curves for $C^1$ fields, for some $\epsilon>0$ there is a regular curve $(x(t),y(t))$ with $\dot{x}(t)=u(x(t),y(t))$ and $\dot{y}(t)=v(x(t),y(t))$ for each $t\in(-\epsilon,\epsilon)$.

Let us prove that $(x(t),y(t))$ is actually a straight line segment. Hereafter all equations are understood as holding for each $t\in(-\epsilon,\epsilon)$, and all the functions $u=u(x(t),y(t)), v=v(x(t),y(t)), f=f(x(t),y(t))$ and their derivatives are evaluated at the point $(x(t),y(t))$.
Differentiating
\begin{equation}\label{eq-auxiliary0}
f_{xx}u+{f_{xy}}v=\frac{f_{xx}f_{yy}- {f_{xy}}^2}{\sqrt{f_{yy}^2+f_{xy}^2}}=0
\end{equation}
with respect to $t$ and substituting $\dot{x}=u$, $\dot{y}=v$, we get
\begin{equation}\label{eq-auxiliary1}
f_{xxx}\dot{x}u+f_{xxy}(\dot{y}u+\dot{x}v)+f_{xyy}\dot{y}v
+f_{xx}\dot{u}+{f_{xy}}\dot{v}=
f_{xxx}u^2+2f_{xxy}uv+f_{xyy}v^2
+f_{xx}\dot{u}+{f_{xy}}\dot{v}
=0.
\end{equation}
Let us simplify the resulting expression further. Differentiating the equation $f_{xx}f_{yy}- {f_{xy}}^2= 0$ with respect to $x$, multiplying by $f_{yy}/(f_{yy}^2+f_{xy}^2)$, and using the equation again, we get
\begin{equation*}\label{eq-auxiliary}
(f_{xxx}f_{yy}-2f_{xxy}f_{xy}+f_{xyy}f_{xx})\frac{f_{yy}}{f_{yy}^2+f_{xy}^2}
=f_{xxx}u^2+2f_{xxy}uv+f_{xyy}v^2=0.
\end{equation*}
Subtracting the resulting equation from~\eqref{eq-auxiliary1} we arrive at
$$
f_{xx}\dot{u}+{f_{xy}}\dot{v}=0.
$$
Comparing with~\eqref{eq-auxiliary0} we get $(\dot{u},\dot{v})\parallel(u,v)$ because $f_{xx}\ne 0$. Since $u^2+v^2=1$ it follows that $u\dot{u}+v\dot{v}=0$, hence $\dot{u}=\dot{v}=0$ and $(x(t),y(t))$ is a line segment. The tangent plane is constant along the segment because $\frac{d}{dt}f_x(x(t),y(t))=f_{xx}u+f_{xy}v=0$ and $\frac{d}{dt}f_y(x(t),y(t))=f_{xy}u+f_{yy}v=0$.
In particular, the plane is tangent to the graph of $f$ along the segment $(x+ut,y+vt,f(x,y)+(f_xu+f_yv)t)$,
where this time all $x,y,u,v,f_x,f_y$ are evaluated at $t=0$.

\subsection*{Technical conventions}
Now we address a technical issue: we need to consider separately the subsets where one of the derivatives $f_{xx}$ or $f_{yy}$ vanishes. In (each connected component of) their interior, the graph of the function $f$ is a cylinder (not necessarily of revolution) and thus is trivially developable. Indeed, if, say, $f_{xx}=0$ inside some square (with the sides parallel to the $x$- and $y$-axes), then by the PDE we get $f_{xy}=0$, hence $f_x=\mathrm{const}$ and $f(x)=ax+b(y)$ for some $a,b(y)$. Then the points on each segment $y=\mathrm{const}$ have a common tangent plane.

Theoretically the boundaries of the subsets where $f_{xx}=0$ or $f_{yy}=0$ can be complicated fractals (see also \cite[Example~1 in \S5.8]{docarmo:1976:DG}):

\begin{example} Take a Cantor set of positive Lebesque measure. Take a $C^\infty$ function $g(x)$ vanishing on the set and positive outside it. Take a $C^\infty$ function $f(x,y)$ not depending on $y$ such that $f_{xx}(x,y)=g(x)$. Then $f_{xx}f_{yy}- {f_{xy}}^2= 0$, and the boundary of the set $f_{xx}=0$ is formed by all $(x,y)$ with $x$ in the Cantor set, hence it is an uncountable union of segments and has positive measure.
\end{example}


Practically the boundary (if nonempty at all) is a curve on the surface, hence ``negligible'' (although still sensible because our algorithm may become unstable near it). To avoid too much technicalities while keeping our work mathematically correct, we prefer to limit ourselves to ``generic'' points on a surface.


\begin{definition*} A \emph{negligible} subset is a countable union of subsets such that the closure of each one has no interior points. We say that an assertion holds at a \emph{generic} point, if it holds outside a negligible set.
\end{definition*}

For instance, the boundary of the zero set of a continuous function is always negligible. On the other hand, whatever small disc in the plane is not negligible (this is the Baire category theorem).

The above PDE derivation remains true even if the assumption on the tangent plane is imposed at a \emph{generic} point of the surface rather than \emph{each} point. Indeed, then we conclude that the PDE holds at a generic point $(x,y)$; but since the left-hand side is continuous, the PDE must hold at \emph{each} point as well.

We have arrived at the following theorem.

\begin{theorem}[characterization of developable surfaces] \label{th-developable} For a $C^3$ function $f\colon D\to \mathbb{R}$ defined in an open disk $D\subset\mathbb{R}^2$ the following $2$ conditions are equivalent:
\begin{enumerate}
  \item\label{item2-th-developable}
  The tangent plane to the graph of $f$ at a generic point is   tangent to the graph along at least one straight line segment passing through the point.
  \item\label{item3-th-developable}
  For each $(x,y)\in D$ we have $f_{xx}f_{yy}- {f_{xy}}^2= 0$, i.e., vanishing Gaussian curvature.
\end{enumerate}
\end{theorem}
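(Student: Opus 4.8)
The plan is to prove the two implications separately, treating \eqref{item3-th-developable}$\Rightarrow$\eqref{item2-th-developable} as the substantive direction.

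For \eqref{item2-th-developable}$\Rightarrow$\eqref{item3-th-developable}: at a generic point $(x,y)$ where the tangent plane $z=ax+by+c$ touches the graph along a segment $t\mapsto(x+ut,y+vt,z+wt)$ with $(u,v)\neq(0,0)$, the identities $f_x(x+ut,y+vt)=a$ and $f_y(x+ut,y+vt)=b$ hold for small $t$; differentiating each in $t$ at $t=0$ yields the linear system $f_{xx}u+f_{xy}v=0$, $f_{xy}u+f_{yy}v=0$, which has a nonzero solution, forcing $f_{xx}f_{yy}-f_{xy}^2=0$ at $(x,y)$. Since this expression is continuous and the complement of a negligible set is dense by the Baire category theorem, it vanishes on all of $D$.

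For \eqref{item3-th-developable}$\Rightarrow$\eqref{item2-th-developable}: first work on the open set $U$ where $f_{xx}f_{yy}\neq0$. There I would take the $C^1$ unit vector field $(u,v)=(f_{yy},-f_{xy})/\sqrt{f_{yy}^2+f_{xy}^2}$ (well-defined and $C^1$ since $f\in C^3$ and the denominator is locally bounded away from $0$), and invoke the Picard--Lindel\"of theorem to obtain integral curves $(x(t),y(t))$. The key step is to show each such curve is a line segment: the PDE gives $f_{xx}u+f_{xy}v=(f_{xx}f_{yy}-f_{xy}^2)/\sqrt{f_{yy}^2+f_{xy}^2}=0$ along the curve; differentiating this in $t$ with $\dot x=u$, $\dot y=v$ produces $f_{xxx}u^2+2f_{xxy}uv+f_{xyy}v^2+f_{xx}\dot u+f_{xy}\dot v=0$, while differentiating the PDE itself with respect to $x$, rescaling by $f_{yy}/(f_{yy}^2+f_{xy}^2)$, and using the PDE again yields exactly $f_{xxx}u^2+2f_{xxy}uv+f_{xyy}v^2=0$. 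Subtracting, all third-order terms cancel and leave $f_{xx}\dot u+f_{xy}\dot v=0$; together with $f_{xx}\neq0$, the proportionality $(\dot u,\dot v)\parallel(u,v)$, and $u^2+v^2=1$, this forces $\dot u=\dot v=0$, so the curve is a segment, and $\frac{d}{dt}f_x=\frac{d}{dt}f_y=0$ along it shows the tangent plane is constant. On the interiors of the complementary sets where $f_{xx}=0$ or $f_{yy}=0$, the PDE forces $f_{xy}=0$ as well, so $f$ splits as $ax+b(y)$ (or symmetrically $a(x)+by$) and the graph is a cylinder, trivially ruled with constant tangent plane along each ruling. The leftover points lie on the boundary of a zero set of a continuous function, hence form a negligible set, so condition \eqref{item2-th-developable} holds at a generic point.

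I expect the main obstacle to be precisely the cancellation of third-order derivatives in the reconstruction step: one must arrange the two differentiations so that the cubic-in-$(u,v)$ combination of $f_{xxx},f_{xxy},f_{xyy}$ appearing from differentiating the ruling condition matches, term for term, the one coming from $\partial_x$ of the Monge--Amp\`ere-type equation $f_{xx}f_{yy}-f_{xy}^2=0$. A secondary but genuine subtlety is the bookkeeping of the exceptional set where $f_{xx}f_{yy}=0$, which is why the statement is phrased in terms of generic points and why the notions of \emph{negligible} set and the Baire category theorem are needed; the displayed Cantor-set example shows this bookkeeping cannot simply be waved away.
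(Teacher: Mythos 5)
Your proposal is correct and follows essentially the same route as the paper's proof: the same differentiation of the tangency identities for the forward direction, and for the converse the same unit vector field $(f_{yy},-f_{xy})/\sqrt{f_{yy}^2+f_{xy}^2}$ integrated via Picard--Lindel\"of, with the identical cancellation of third-order terms (differentiating the Monge--Amp\`ere equation in $x$ and rescaling by $f_{yy}/(f_{yy}^2+f_{xy}^2)$) to force $\dot u=\dot v=0$, plus the same treatment of the degenerate sets $f_{xx}=0$, $f_{yy}=0$ as cylinders with negligible boundary. No substantive differences to report.
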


\begin{remark} Theorem~\ref{th-developable} remains true, if ``a generic point'' is replaced by ``each point''. The proof is obtained by the above argument plus \cite[Proposition~3 in \S5.8]{docarmo:1976:DG} due to W.S.~Massey. The proof of the additional proposition is more complicated although still elementary.
\end{remark}

\section{Ruled surfaces}\label{sec-ruled}

Developable surfaces are (composed of) special ruled surfaces. The converse is not true. At a generic, so-called non-torsal ruling of a ruled surface, the tangent plane is not constant (Fig.~\ref{fig:DevelSrf}(b); for a detailed discussion, see \cite{pottwall:2001}). Hence, we treat ruled surfaces separately as they appear as limits of surfaces enveloped by a family of congruent rotational cones when the opening angle tends to zero whereas the vertices stay fixed.
Again, we derive a PDE characterizing ruled surfaces. It is by far less known than the one for developable surfaces. The classical origin is found in affine differential geometry (see Blaschke \cite{blaschke:1923}, cf.~\cite{monge-1780}), where ruled surfaces are characterized by the vanishing of a 3rd order differential invariant, called {\it Pick's invariant}. To our knowledge, the resulting PDE was first written explicitly by R.~Bryant recently \cite{briant}. All that is equivalent to the result (Theorem~\ref{th-ruled}) given below. Our approach is elementary and does not require knowledge in affine differential geometry.

The PDE for ruled surfaces is found in similar way as the one for developable surfaces in Section~\ref{sec-developable}. Again we consider the graph of a smooth function $f$. Differentiation along a ruling gives a system of algebraic equations on the ruling direction. Taking the resultant, we get a single PDE on $f$ (plus an inequality guaranteeing that the solutions are real). Conversely, given $f$ satisfying the PDE and the inequality, the rulings are reconstructed as follows. First we pick up a suitable normalized solution of our system at each point to get a smooth vector field (directions of ruling projections). Then we prove that the integral curves of the field are straight line segments and 
the restriction of $f$ to these segments is linear.


\subsection*{Derivation of the PDE}
Assume that the segment $(x+ut, y+vt, z+wt)$, where $t$ runs through $(-\varepsilon,\varepsilon)$ and $u,v,w\in\mathbb{R}$ are fixed, is contained in the graph of a $C^3$ function $f(x,y)$.
Then $z+wt=f(x+ut,y+vt)$ identically. Differentiating $3$ times with respect to $t$ consecutively, we get
\begin{equation}\label{eq-th-ruled}
         \begin{cases}
          f_{xx} u^2 + 2f_{xy}uv + f_{yy}v^2=0, & \\
          f_{xxx}u^3+3f_{xxy}u^2v+3f_{xyy}uv^2+f_{yyy}v^3=0. &
        \end{cases}
      \end{equation}


The solvability of the system~\eqref{eq-th-ruled} is analysed directly. The two equations~\eqref{eq-th-ruled} have a common solution $(u,v)$, if and only if the resultant of the left-hand-side polynomials vanishes:
     \begin{multline}\label{eq-th-ruled-resultant}
        {f_{yy}}^3{f_{xxx}}^2+6{f_{yy}}{f_{xxx}}{f_{yyy}}{f_{xy}}{f_{xx}}
        -6{f_{yy}}^2{f_{xxx}}{f_{xyy}}{f_{xx}}-6{f_{yyy}}{f_{xy}}{f_{xx}}^2{f_{xyy}} \\
        +9{f_{yy}}{f_{xyy}}^2{f_{xx}}^2-6{f_{xy}}{f_{yy}}^2{f_{xxy}}{f_{xxx}}
        +12{f_{xy}}^2{f_{xxy}}{f_{yyy}}{f_{xx}}-18{f_{xy}}{f_{yy}}{f_{xxy}}{f_{xyy}}{f_{xx}}
        \\
        +12{f_{yy}}{f_{xyy}}{f_{xy}}^2{f_{xxx}}-8{f_{yyy}}{f_{xy}}^3{f_{xxx}}
        +9{f_{xx}}{f_{yy}}^2{f_{xxy}}^2-6{f_{yy}}{f_{xxy}}{f_{yyy}}{f_{xx}}^2
        +{f_{yyy}}^2{f_{xx}}^3 = 0.
     \end{multline}
The first equation of~\eqref{eq-th-ruled} has a real solution (and moreover all solutions are proportional to real ones), if and only if $f_{xx}f_{yy}- {f_{xy}}^2\le 0$, i.e., the Gaussian curvature $K$ of the surface is non-positive. By the property of the resultant, \eqref{eq-th-ruled} has a real solution $(u,v)$, if and only if we have~\eqref{eq-th-ruled-resultant} and $f_{xx}f_{yy}- {f_{xy}}^2\le 0$.

Geometrically, the first equation of~\eqref{eq-th-ruled} expresses that the segment is an asymptotic direction (direction of vanishing normal curvature; see \cite{docarmo:1976:DG}). If both equations in~\eqref{eq-th-ruled} are satisfied, the line in direction $(u,v,w)$ (with $w=f_xu+f_yv$) has 3rd order contact with the surface (cf.~Definition~\ref{def-contact} below). 



In particular, \eqref{eq-th-ruled} has a real solution $(u,v)$ at \emph{each} point $(x,y)$, even if we assume that the surface contains a line segment through a \emph{generic} point only. 
Indeed, then the PDE and the inequality hold at a generic point $(x,y)$; but since their left-hand sides are continuous, they must hold everywhere.

\subsection*{Yet another technical issue}

To prove that the resulting PDE and the inequality imply that the surface is ruled, we address yet another technical issue. Assume that \eqref{eq-th-ruled} has a nonzero real solution $(u,v)$ at each point $(x,y)$. We would like to pick up a nonzero solution $(u(x,y),v(x,y))$ \emph{smoothly} ($C^1$) depending on the point $(x,y)$. This is not possible in general: for instance, the solutions of the system
$$
\begin{cases}
  u^2-v^2=0, \\
  xu^3-|x|v^3=0;
\end{cases}
$$
are proportional to $(1,1)$ for $x\ge 0$ and to $(1,-1)$ for $x\le 0$. Thus we have to restrict to a smaller domain as follows.

Since the first equation of~\eqref{eq-th-ruled} has a real solution, it follows that $f_{xx}f_{yy}- {f_{xy}}^2\le 0$.
In the interior of the subset where $f_{xx}f_{yy}- {f_{xy}}^2=0$, the surface is developable, hence ruled, by Theorem~\ref{th-developable}. Drop the negligible boundary of the subset and further restrict to the subset where $f_{xx}f_{yy}-{f_{xy}}^2<0$. 

Consider the auxiliary system consisting of the first equation of~\eqref{eq-th-ruled} and the equation $u^2+v^2=1$. The former
is quadratic with the discriminant $f_{xx}f_{yy}- {f_{xy}}^2<0$,
hence defines a pair of lines passing through the origin in the $(u,v)$ plane. The latter equation defines a circle transversal to the lines. Hence the system has exactly $4$ solutions, none of which are multiple. By the implicit function theorem, in a sufficiently small neighborhood of any point $(x_0,y_0)$ the solutions form $4$ smooth branches $(u_k(x,y),v_k(x,y))$, where $k=1,2,3,4$.

For each $k=1,2,3,4$ consider the closed subset where $(u_k(x,y),v_k(x,y))$ satisfies the second equation of~\eqref{eq-th-ruled} as well. These $4$ subsets cover
the whole neighborhood in question and have negligible boundary. Thus a generic point belongs to the interior of one of these subsets.

\subsection*{Reconstruction of the rulings}

We have proved that if \eqref{eq-th-ruled} has a nonzero real solution $(u,v)$ and $f_{xx}f_{yy}-{f_{xy}}^2\ne 0$, then in a neighborhood of a generic point there is a solution $(u(x,y),v(x,y))$ depending smoothly ($C^1$) on $(x,y)$ such that $u(x,y)^2+v(x,y)^2=1$.

Then by the Picard-Lindel\"of theorem for some $\epsilon>0$ there is a regular curve $(x(t),y(t))$ with $\dot{x}(t)=u(x(t),y(t))$ and $\dot{y}(t)=v(x(t),y(t))$ for each $t\in(-\epsilon,\epsilon)$. Let us prove that $(x(t),y(t))$ is a straight line segment and $f(x(t),y(t))$ is linear.

The left-hand side of the first equation of~\eqref{eq-th-ruled} is a function on the curve $(x(t),y(t))$ vanishing identically. Differentiating the function with respect to $t$ we get
$$
f_{xxx}\dot{x}u^2+f_{xxy}(\dot{y}u^2+2\dot{x}uv)
+f_{xyy}(2\dot{y}uv+\dot{x}v^2)+f_{yyy}\dot{y}v^2
+2f_{xx}u\dot{u}+2f_{xy}(u\dot{v}+v\dot{u})+2f_{yy}v\dot{v}
=0.
$$
Substituting $\dot{x}=u$, $\dot{y}=v$, and subtracting the second equation of~\eqref{eq-th-ruled}, we get
$$
f_{xx}u\dot{u}+f_{xy}(u\dot{v}+v\dot{u})+f_{yy}v\dot{v}
=0.
$$
Then by the first equation of~\eqref{eq-th-ruled} both $(\dot{u},\dot{v})$ and $(u,v)$ are orthogonal to the vector $(f_{xx}u+f_{xy}v,f_{xy}u+f_{yy}v)$. The latter is nonzero because $f_{xx}f_{yy}- {f_{xy}}^2\ne 0$ and $u^2+v^2=1$. Hence $(\dot{u},\dot{v})\parallel(u,v)$.
Since $u^2+v^2=1$ it follows that $\dot{u}=\dot{v}=0$ and $(x(t),y(t))$ is a line segment. The restriction of $f$ to the segment is linear because $\frac{d^2}{dt^2}f(x(t),y(t))=0$ by the first equation of~\eqref{eq-th-ruled}.

We have arrived at the following theorem.

\begin{theorem}[characterization of ruled surfaces] \label{th-ruled} For a $C^3$ function $f\colon D\to \mathbb{R}$ defined in an open disk $D\subset\mathbb{R}^2$ the following $3$ conditions are equivalent:
\begin{enumerate}
  \item\label{item1-th-ruled}
  Through a generic point of the graph of $f$ there passes a line segment completely contained in the graph.
  \item\label{item2-th-ruled}
  For each $(x,y)\in D$, the two equations~\eqref{eq-th-ruled} have a common nonzero real solution $(u,v)$.
  \item\label{item3-th-ruled}
  For each $(x,y)\in D$ we have~\eqref{eq-th-ruled-resultant} and $f_{xx}f_{yy}- {f_{xy}}^2\le 0$ (i.e., Gaussian curvature is nonpositive).
\end{enumerate}
\end{theorem}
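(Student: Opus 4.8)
The plan is to prove the three equivalences in a cycle, following the template already established for Theorem~\ref{th-developable}, but being careful about the real-solvability subtleties that the resultant alone does not see.

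\textbf{Step 1: \ref{item1-th-ruled} $\Rightarrow$ \ref{item2-th-ruled}.} This is essentially done in the ``Derivation of the PDE'' subsection above. If a line segment $(x+ut,y+vt,z+wt)$ lies on the graph through a generic point, then $z+wt=f(x+ut,y+vt)$ identically in $t$; differentiating twice and three times in $t$ and evaluating at $t=0$ yields the two equations~\eqref{eq-th-ruled} with a common nonzero real solution $(u,v)$ (note $w=f_xu+f_yv$ from the first derivative). This gives \ref{item2-th-ruled} at a generic point. To upgrade ``generic'' to ``each'': the left-hand sides of~\eqref{eq-th-ruled-resultant} and of the inequality $f_{xx}f_{yy}-f_{xy}^2\le 0$ are continuous in $(x,y)$, so holding on a dense set forces them to hold everywhere; and by the resultant property plus the sign condition on the discriminant of the first (quadratic) equation, \eqref{eq-th-ruled} then has a common real solution at \emph{each} point. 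So \ref{item1-th-ruled} $\Rightarrow$ \ref{item3-th-ruled}, and \ref{item3-th-ruled} $\Rightarrow$ \ref{item2-th-ruled} is immediate from the definition of the resultant together with the sign condition.

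\textbf{Step 2: \ref{item2-th-ruled} (equivalently \ref{item3-th-ruled}) $\Rightarrow$ \ref{item1-th-ruled}.} This is the substance, and it splits according to the sign of $f_{xx}f_{yy}-f_{xy}^2$. On the interior of the set where this quantity is zero, Theorem~\ref{th-developable} already gives a ruling through each point; its boundary is negligible. On the open set where $f_{xx}f_{yy}-f_{xy}^2<0$, I would argue exactly as in the ``Yet another technical issue'' and ``Reconstruction of the rulings'' subsections: the first equation of~\eqref{eq-th-ruled} together with $u^2+v^2=1$ has four simple solutions, giving (locally, via the implicit function theorem) four $C^1$ branches $(u_k,v_k)$; a generic point lies in the interior of one of the four closed sets where the second equation of~\eqref{eq-th-ruled} also holds, so near such a point we get a $C^1$ unit vector field $(u,v)$ solving both equations of~\eqref{eq-th-ruled}. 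Picard--Lindel\"of produces an integral curve $(x(t),y(t))$; differentiating the first equation of~\eqref{eq-th-ruled} along it, substituting $\dot x=u,\dot y=v$, and subtracting the second equation yields $f_{xx}u\dot u+f_{xy}(u\dot v+v\dot u)+f_{yy}v\dot v=0$, which says $(\dot u,\dot v)\perp(f_{xx}u+f_{xy}v,\,f_{xy}u+f_{yy}v)$; the same vector is orthogonal to $(u,v)$ by the first equation, and it is nonzero because the Hessian determinant is nonzero and $u^2+v^2=1$, so $(\dot u,\dot v)\parallel(u,v)$, and then $u^2+v^2=1$ forces $\dot u=\dot v=0$. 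Hence $(x(t),y(t))$ is a line segment, and $\frac{d^2}{dt^2}f(x(t),y(t))=f_{xx}u^2+2f_{xy}uv+f_{yy}v^2=0$ by the first equation of~\eqref{eq-th-ruled}, so $f$ restricted to the segment is affine; thus the spatial segment $(x+ut,y+vt,f(x,y)+(f_xu+f_yv)t)$ lies on the graph and passes through the (generic) starting point, giving \ref{item1-th-ruled}.

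\textbf{Main obstacle.} The genuine difficulty is not the differential-geometric computation but the bookkeeping needed to obtain a $C^1$ vector field of ruling directions: the resultant vanishing only guarantees a common root, and as the displayed toy system $u^2-v^2=0,\ xu^3-|x|v^3=0$ shows, the chosen root can jump. The fix — restricting to the open set where the Hessian determinant is strictly negative, separating into four smooth branches of the quadratic, and noting that the coincidence locus with the cubic decomposes the neighborhood into finitely many closed pieces with negligible boundary — is exactly why the statement is phrased for \emph{generic} points. A secondary point to handle cleanly is the passage from ``generic'' back to ``each $(x,y)$'' in condition~\ref{item3-th-ruled}, which works only because the PDE and the inequality have continuous left-hand sides, and from ``generic'' to ``each'' in~\ref{item2-th-ruled}, which additionally uses that a real common root exists wherever the resultant vanishes and the quadratic's discriminant is $\le 0$.
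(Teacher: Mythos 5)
Your proposal is correct and follows essentially the same route as the paper's own argument: the identical differentiation along a ruling for \ref{item1-th-ruled}$\Rightarrow$\ref{item2-th-ruled}/\ref{item3-th-ruled} (with the continuity of the resultant and of $f_{xx}f_{yy}-f_{xy}^2$ upgrading ``generic'' to ``each'', and the resultant property plus the sign condition giving \ref{item3-th-ruled}$\Leftrightarrow$\ref{item2-th-ruled}), and the identical converse via reduction to Theorem~\ref{th-developable} on the set where the Hessian determinant vanishes, the four simple branches of the quadratic intersected with $u^2+v^2=1$, a locally $C^1$ choice of branch near a generic point, Picard--Lindel\"of, and the orthogonality argument forcing $\dot u=\dot v=0$ and linearity of $f$ along the integral curve. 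There is nothing substantive to add or correct.
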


\begin{remark}
In case of strictly negative Gaussian curvature, our argument shows that the graph contains a \emph{continuous} family of line segments (and even an analytic family, if $f$ is analytic, cf.~\cite[Proof of Corollary~3]{Skopenkov-Pottmann-Grohs:2012}).
\end{remark}


\section{Surfaces enveloped by a family of rotational cones, using a point model of the space of planes}
\label{sec-model}

Now we come to the main topic of the paper: how to characterize surfaces enveloped by a one-parametric family of congruent cones? To minimize technicalities, we consider surfaces tangent to cones along curves rather than arbitrary envelopes of cones, and exclude certain positions of these curves.
In this section we reduce the problem to the characterization of surfaces containing a special conic through each point, which is tractable by the methods already discussed.

\subsection*{Motivation}

The motivation for using a plane-based approach is the following. A cone has just a one-parameter family
of tangent planes $T(u)$. Moving the cone, seen as set of its tangent planes, under a generic smooth one-parameter motion,  \emph{we obtain a two-parameter family of planes $T(u,v)$. These are precisely the tangent planes of the envelope!}

One can convert the resulting (plane) representation of the envelope into its dual (point) variant by computing the intersection points
\begin{equation} \label{dual-to-point}
    \mathbf{r}(u,v) = T(u,v) \cap T_u(u,v) \cap T_v(u,v),
\end{equation}
where $T_u(u,v)$ and $T_v(u,v)$ are the planes with the equations obtained from the equation of $T(u,v)$ by taking partial derivatives. That is, if $T(u,v)$ has the equation
 $$n_1(u,v)x+n_2(u,v)y+n_3(u,v)z+h(u,v)=0,$$
 then $T_u(u,v)$ is given by
 $$\frac{\partial n_1(u,v)}{\partial u}x+\frac{\partial n_2(u,v)}{\partial u}y+\frac{\partial n_3(u,v)}{\partial u}z+\frac{\partial h(u,v)}{\partial u}=0. $$
 This equation will not degenerate, as the intersections $T(u,v) \cap T_u(u,v)$ are the rulings of the cone. However, $T_v(u,v)$
 may degenerate. Even all four partial derivatives with respect to $v$ may vanish simultaneously at particular points.
 Also, even if $T_v(u,v)$ is a well defined plane,  the intersection (\ref{dual-to-point}) may be at infinity or be an entire straight line. For our purposes, it is
 not important to discuss all these cases and the corresponding properties of the generating motion. This is why we talked about
 a \emph{generic} motion, which we want to define as one where (\ref{dual-to-point}) is always a well-defined point in $\mathbb{R}^3$ smoothly depending on $u,v$.



 A few more remarks are in place:  Note that we consider the whole unbounded moving cone and the possibly unbounded
envelope. Also note that the envelope may consist of several parts and may have self-intersections. For example, when a rotational cylinder of radius $r$ moves in a way such that its axis
remains tangent to a generic space curve $c$, the envelope consists of two offset surfaces of the tangent developable of the curve $c$ and a pipe surface (the envelope of
spheres of radius $r$, centered at $c$). By the way, the latter part of the envelope is useless for the CNC machining application we have in mind. We prefer to avoid envelopes in the precise statements of our results because this notion has slightly different definitions in the literature. (Sometimes this even leads to confusion: e.g., osculating circles of a generic curve are nested but all tangent to the curve; their envelope is the curve itself or empty depending on the choice of definition. In view of that notice that \cite[Lemma~7]{Skopenkov-Pottmann-Grohs:2012} remains true for nested circles and should be applied in case~(3) of the proof of Theorem~4 there.)

Anyway, converting the plane representation of the envelope into the point one is a postprocessing step and is not necessary for a characterization of these envelopes when we work in the space of planes.

\subsection*{Definition of the point model}

Since geometric processing is easier in terms of points rather than planes, we apply a map that transforms planes to points and use a certain duality between plane and point coordinates. As we work with rotational cones, we use a transformation which allows us easily to recognize these cones in the point model.
The right setting is that of {\it Laguerre geometry}\footnote{Another well-known assignment of points to planes is \emph{polarity} with respect to the unit sphere. But it does not work that well because leads to ``linear'' functions on the sphere rather than on the plane, which are hard to deal with.},
the geometry of oriented planes \cite{blaschke:1929, cecil:1992}. Laguerre geometry has already been useful in various applications
in CAGD, see \cite{DOHM2009111,KRASAUSKAS2000101,krasauskas-zube,peternell-1998-lgaro,pottmann-1998-alcagd}.
We try to give a concise, precise, and self-contained introduction to the subject; this is an update of \cite[\S2.3]{Skopenkov-Pottmann-Grohs:2012}.

We introduce the following coordinates for planes in space. Let an oriented plane $P$ be given by the equation $n_1x+n_2y+n_3z+h=0$, where $(n_1,n_2,n_3)\ne(0,0,-1)$ is the oriented unit normal to the plane and $|h|$ is the distance from the origin. The desired coordinates of the plane $P$ is the triple
\begin{equation}\label{eq-isotropic-model}
\left(\frac{n_1}{n_3+1},\frac{n_2}{n_3+1},\frac{h}{n_3+1}\right). 
\end{equation}
For the geometric considerations which lead to such coordinates, 
we refer to \cite{pottmann-1998-alcagd,pottmann-2009-lms,pottmann-2007-ds}.
To think geometrically, denote by $P^i$ the point with these coordinates, see Fig.~\ref{fig:IsotropicMap}. This correspondence between planes and points is called the \emph{isotropic model of Laguerre geometry}; see \cite{pottmann-1998-alcagd,pottmann-2009-lms,pottmann-2007-ds}.
The simple non-Euclidean geometry in the point model,
known as {\it isotropic geometry}, is treated in detail in \cite{Sachs:1990}.

 \begin{figure}[!tb]
\vrule width0pt
\begin{overpic}[width=0.32\columnwidth]{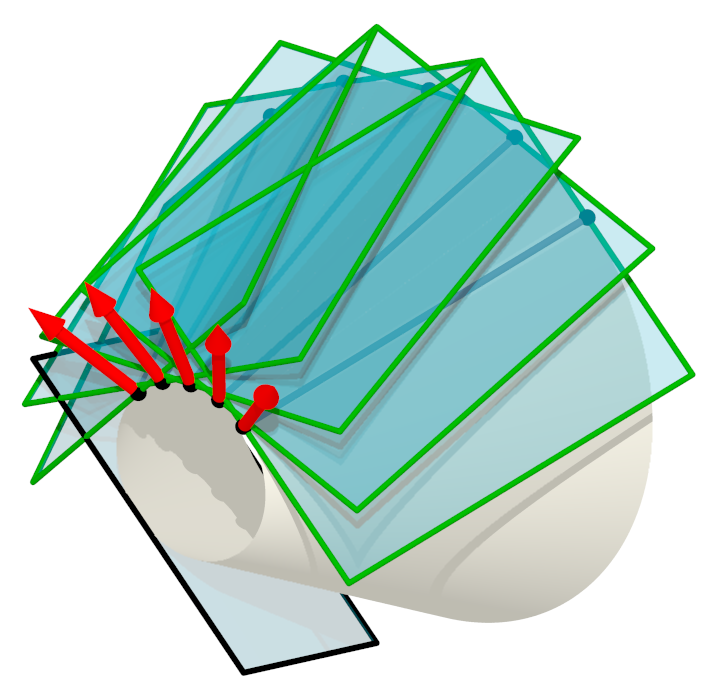}
{\small
	\put(0,80){\fcolorbox{gray}{white}{Euclidean space}}
	\put(70,65){\fcolorbox{gray}{white}{\includegraphics[width=0.09\textwidth]{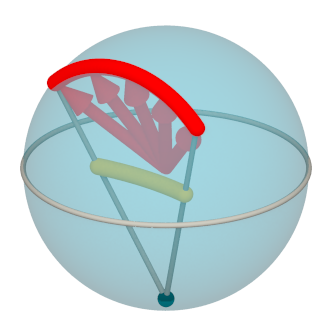}}}
	\put(38,0){\fcolorbox{gray}{white}{\tiny $n_1x+n_2y+n_3z+h=0$}}
  	\put(0,-2){(a)}
}
\end{overpic}
\hfill
\begin{overpic}[width=0.32\columnwidth]{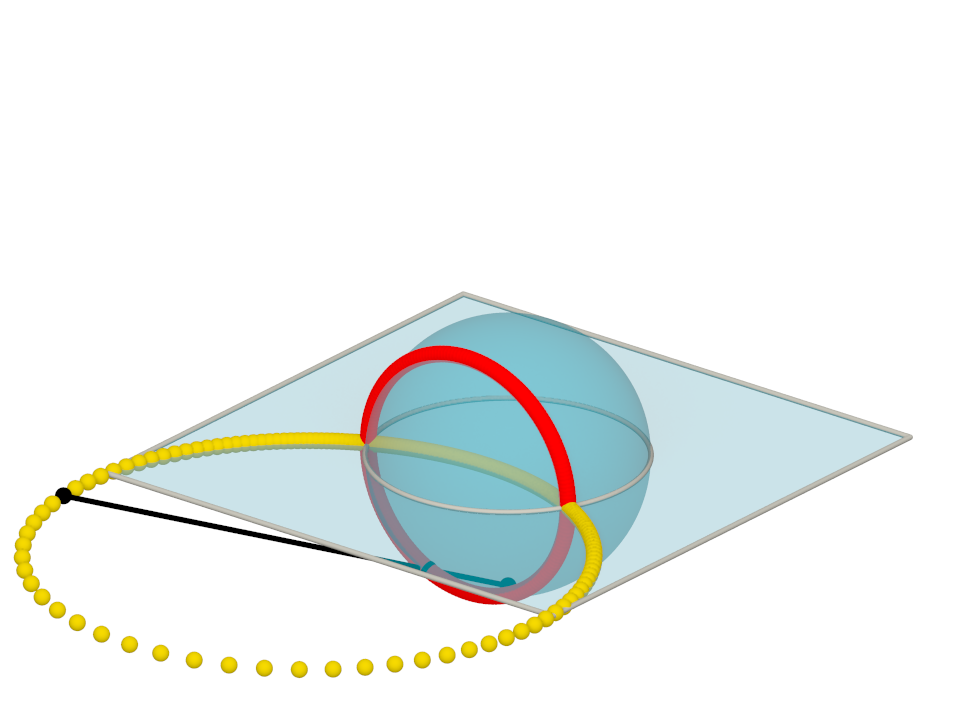}
{\small
	\put(0,-2){(b)}
	\put(83,20){$z=0$}
  }
\end{overpic}
\hfill
\begin{overpic}[width=0.32\columnwidth]{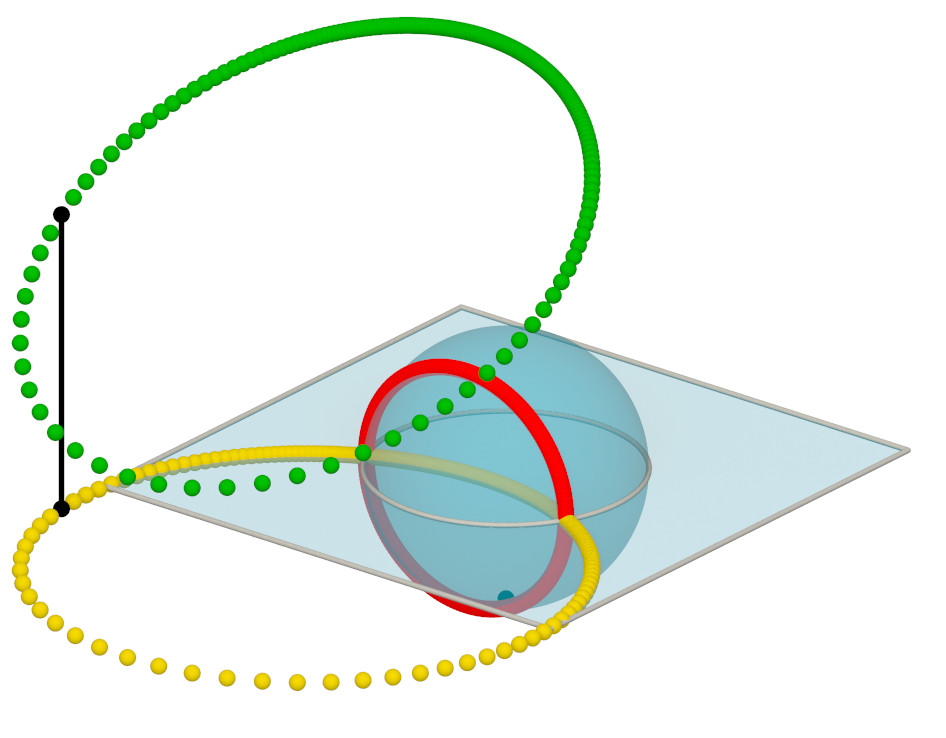}
{\small
\put(-40,80){\fcolorbox{gray}{white}{Isotropic space}}
  \put(0,-2){(c)}
  \put(7,53){$\left(\frac{n_1}{n_3+1},\frac{n_2}{n_3+1},\frac{h}{n_3+1}\right)$}
  }
\end{overpic}
\hfill\vrule width0pt\\
\vspace{-12pt}
\caption{In the isotropic model of Laguerre geometry, planes appear as points and the tangent planes of a rotational cone are seen as special conics (isotropic circles): (a) A cone is considered as a one-parameter family of oriented planes and its normals (red) define a circle (only an arc is shown) on the Gaussian sphere (top-framed). (b) The Gaussian image (red circle) is projected from the south pole $(0,0,-1)$ to the $z=0$ plane to define the ``top view'' of the isotropic image (yellow). (d) The isotropic image of the cone (green conic), see Eq.~\eqref{eq-isotropic-model}.}
\label{fig:IsotropicMap}
\end{figure}

To map an oriented surface $\Phi$ to the isotropic model, we consider the set $\Phi^i$ of points $P^i$, where $P$ runs through all oriented tangent planes to $\Phi$ with the oriented normals distinct from $(0,0,-1)$. Hereafter by an \emph{oriented surface} we mean the image of a proper injective $C^2$ map of an open disk --- or more generally of a smooth $2$-manifold --- into $\mathbb{R}^3$ with nondegenerate differential at each point, equipped with oriented unit normals continuously depending on the point.
For example, a sphere with center $(m_1,m_2,m_3)$, radius $R$, and inwards oriented normals is mapped to the rotational paraboloid (possibly degenerating to a plane),
\begin{equation}
  z=\frac{R+m_3}{2}(x^2+y^2)-m_1x-m_2y+\frac{R-m_3}{2} \label{iso-sphere}.
\end{equation}

The projection $(x,y,z) \mapsto (x,y,0)$ of space onto the $xy$-plane is called \emph{top view}. The top view of $\Phi^i$ is actually the stereographic projection of the Gaussian spherical image of $\Phi$ from the point $(0,0,-1)$ to the $xy$-plane, see
Fig.~\ref{fig:IsotropicMap}(b). In particular, if the Gaussian curvature of $\Phi$ does not vanish, then $\Phi^i$ is locally a graph of a function.

By a \emph{cone} we mean an {\it oriented cone of revolution}\footnote{To be precise, we exclude the vertex to get a smooth surface.}.
The \emph{opening angle} $\theta$ of a cone is the angle between the axis and a ruling. A cone, viewed as the common tangent planes of two oriented spheres, is mapped to the common points of two paraboloids of form~\eqref{iso-sphere}, i.e. a conic with the top view being a circle (or a parabola with the top view being a line). Such a conic is called a \emph{circle in isotropic geometry}, or \emph{isotropic circle}.

The shape of the top view can be obtained algebraically by eliminating $z$ from the system of two equations of form~\eqref{iso-sphere}. But geometry gives more insight: the top view is the stereographic projection of the Gaussian spherical image of the cone, i.e., the projection of a circle of intrinsic radius $\pi/2-\theta$ on the unit sphere; see Fig.~\ref{fig:IsotropicMap}. This leads to the following key observations.

\begin{proposition} \label{prop-cone-translation} For a cone $C$ with the opening angle $\theta$ such that all the oriented unit normals are distinct from $(0,0,-1)$ the set $C^i$ is a conic satisfying the following condition:

$(\Theta)$ the top view of the conic is the stereographic projection of a circle of intrinsic radius $\pi/2-\theta$ in the unit sphere (not passing through the projection center $(0,0,-1)$).
\end{proposition}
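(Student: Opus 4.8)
The plan is to prove Proposition~\ref{prop-cone-translation} by a direct geometric chain of correspondences, tracking the cone through three pictures: Euclidean space, the Gaussian sphere, and the isotropic model. First I would recall that the oriented unit normals of a cone of revolution with axis direction $\mathbf{a}$ and opening angle $\theta$ are exactly the unit vectors $\mathbf{n}$ making a fixed angle $\pi/2-\theta$ with $\mathbf{a}$ (the normal to a tangent plane along a ruling is perpendicular to that ruling, and the rulings make angle $\theta$ with the axis). Hence the Gaussian spherical image of $C$ is precisely the circle $\{\mathbf{n}\in S^2 : \langle \mathbf{n},\mathbf{a}\rangle = \sin\theta\}$, which is a circle of (intrinsic, i.e. spherical) radius $\pi/2-\theta$ on the unit sphere. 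The hypothesis that no oriented normal equals $(0,0,-1)$ says exactly that this circle avoids the projection center $(0,0,-1)$.

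Next I would invoke the description of the isotropic model already set up in the excerpt: the coordinates~\eqref{eq-isotropic-model} assign to an oriented plane $P$ the point $P^i=\left(\tfrac{n_1}{n_3+1},\tfrac{n_2}{n_3+1},\tfrac{h}{n_3+1}\right)$, and the paper has already observed that the top view $(x,y,z)\mapsto(x,y,0)$ of $P^i$ is the stereographic projection of the Gaussian image from $(0,0,-1)$. Composing: the top view of $C^i$ is the stereographic projection from $(0,0,-1)$ of the circle of spherical radius $\pi/2-\theta$ found above. That is condition $(\Theta)$. It remains only to confirm that $C^i$ is genuinely a conic and not some larger set; for this I would use the description already given in the excerpt --- a cone is the family of common tangent planes of two oriented spheres, and under~\eqref{eq-isotropic-model} the isotropic image of an oriented sphere is a paraboloid of the form~\eqref{iso-sphere}, so $C^i$ lies on the intersection of two such paraboloids, which (being nonempty and one-dimensional, as its top view is a circle) is a conic. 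Equivalently one can parametrize the tangent planes of $C$ by the ruling angle and observe directly that~\eqref{eq-isotropic-model} yields a rational quadratic parametrization lying in a plane.

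The only genuinely delicate point is the bookkeeping that ties the normalization $n_1^2+n_2^2+n_3^2=1$ together with the requirement $(n_1,n_2,n_3)\neq(0,0,-1)$ to the stereographic projection formula, and checking that the circle on $S^2$ does not degenerate (it is a genuine circle, not a point, precisely because $0<\theta<\pi/2$, so $\pi/2-\theta$ lies strictly between $0$ and $\pi/2$). Once that is in place, the statement is essentially a restatement of facts already assembled in this section, so I expect no serious obstacle; the main care is simply to keep the orientations consistent (inwards vs.\ outwards normals, and which of the two nappes of the conic one is describing) so that the spherical radius comes out as $\pi/2-\theta$ rather than $\pi/2+\theta$ or $\theta$.
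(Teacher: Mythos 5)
Your proposal is correct and follows essentially the same route as the paper, which establishes the proposition through the preceding discussion: the cone, viewed as the common tangent planes of two oriented spheres, maps to the intersection of two paraboloids of form~\eqref{iso-sphere} (hence a conic), while the top view of $C^i$ is the stereographic projection of the Gaussian spherical image of $C$, a circle of intrinsic radius $\pi/2-\theta$ avoiding $(0,0,-1)$ by hypothesis. Your extra remarks on nondegeneracy and orientation bookkeeping are consistent with, and slightly more explicit than, the paper's treatment.
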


\begin{proposition} \label{prop-translation} Let $\Phi$ be an oriented  surface in $\mathbb{R}^3$ with nowhere vanishing Gaussian curvature and the oriented unit normals distinct from $(0,0,-1)$. Then the following two conditions are equivalent:
	\begin{itemize}
		\item through each point of $\Phi$ there passes an oriented cone which is tangent to $\Phi$ along a continuous curve  containing the point (not a ruling because the Gaussian curvature of $\Phi$ does not vanish), has the opening angle $\theta$, and has no oriented unit normals of the form $(0,0,-1)$;
	    \item through each point of $\Phi^i$ there passes an arc of a conic contained in $\Phi^i$ and satisfying condition~$(\Theta)$.
	\end{itemize}
\end{proposition}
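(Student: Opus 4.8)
The plan is to push everything through the isotropic model and use two auxiliary facts. First, because the Gaussian curvature of $\Phi$ is nowhere zero, the Gauss map of $\Phi$ is a local diffeomorphism onto its image; composing with the stereographic projection hidden in \eqref{eq-isotropic-model}, the assignment $q\mapsto (T_q\Phi)^i$ (point of $\Phi$ $\mapsto$ its oriented tangent plane $\mapsto$ its isotropic image) is a local diffeomorphism, so locally the points of $\Phi$ and of $\Phi^i$ correspond bijectively and, in particular, this assignment is locally injective along any curve. Second, I will use the converse of Proposition~\ref{prop-cone-translation}: every conic satisfying $(\Theta)$ all of whose points are images of oriented planes with normal $\ne(0,0,-1)$ equals $C^i$ for a unique oriented cone $C$ of opening angle $\theta$ with no oriented unit normal of the form $(0,0,-1)$. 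This converse follows by reversing the construction preceding Proposition~\ref{prop-cone-translation}: such a conic lies in a non-vertical plane, hence over its circular top view it is an affine graph, hence it lies on a pencil of paraboloids of form~\eqref{iso-sphere}; two of these recover two oriented spheres whose common oriented tangent planes form the desired cone, and $(\Theta)$ pins its opening angle to $\theta$. I would state this as a one-line lemma and leave the routine verification (or a reference to the surrounding discussion) to the reader.

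For the implication ``cone $\Rightarrow$ conic arc'', fix $p^i\in\Phi^i$, write $p^i=P^i$ with $P=T_p\Phi$ for a point $p\in\Phi$, and take the cone $C$ through $p$ supplied by the first condition, tangent to $\Phi$ along a continuous curve $\gamma\ni p$, with opening angle $\theta$ and no normal $(0,0,-1)$. For every $q\in\gamma$ we have $T_qC=T_q\Phi$ as oriented planes, so $\alpha:=\{(T_q\Phi)^i:q\in\gamma\}$ lies in $C^i\cap\Phi^i$ and contains $p^i$. By Proposition~\ref{prop-cone-translation}, $C^i$ is a conic satisfying $(\Theta)$; and since $q\mapsto (T_q\Phi)^i$ is locally injective, near $p^i$ the connected set $\alpha$ is a genuine arc of the conic $C^i$. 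Thus $\alpha$ is an arc of a conic satisfying $(\Theta)$, contained in $\Phi^i$ and passing through $p^i$, which is the second condition.

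For the converse, ``conic arc $\Rightarrow$ cone'', let $\alpha\subset\Phi^i$ be an arc of a conic through $p^i$ satisfying $(\Theta)$. By the lemma, the conic carrying $\alpha$ equals $C^i$ for an oriented cone $C$ of opening angle $\theta$ with no normal $(0,0,-1)$. Parametrize $\alpha$ so that the corresponding oriented planes $P(t):\langle\mathbf n(t),\mathbf x\rangle+h(t)=0$, $|\mathbf n(t)|=1$, have $\dot{\mathbf n}(t)\ne 0$ (possible since the Gauss image of $C$ is a regular circle). Each $P(t)$ is simultaneously an oriented tangent plane of $C$ (as $P(t)^i\in C^i$) and of $\Phi$ (as $P(t)^i\in\Phi^i$); being equal as points of the isotropic model they are literally the same oriented planes, so the orientations are compatible. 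Since the Gauss map of $\Phi$ is a local diffeomorphism, there is a unique smooth curve $\mathbf q(t)$ on $\Phi$ near $p$ with $T_{\mathbf q(t)}\Phi=P(t)$, and $\mathbf q(t_0)=p$ for the $t_0$ with $P(t_0)=T_p\Phi$. From $\langle\mathbf n(t),\mathbf q(t)\rangle+h(t)=0$, differentiation gives $\langle\dot{\mathbf n},\mathbf q\rangle+\langle\mathbf n,\dot{\mathbf q}\rangle+\dot h=0$, and the middle term vanishes because $\dot{\mathbf q}$ is tangent to $\Phi$ while $\mathbf n$ is its unit normal at $\mathbf q(t)$; hence $\langle\dot{\mathbf n},\mathbf q\rangle+\dot h=0$, i.e. $\mathbf q(t)$ also lies on the derived plane $\dot P(t):\langle\dot{\mathbf n},\mathbf x\rangle+\dot h=0$. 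Therefore $\mathbf q(t)$ lies on the characteristic line $P(t)\cap\dot P(t)$ of the one-parameter family $\{P(s)\}$, which is a ruling of the envelope of that family; and the envelope is exactly $C$. Consequently $\mathbf q(t)\in C$, so $\mathbf q(\cdot)$ is a continuous curve inside $C\cap\Phi$ along which $C$ and $\Phi$ share the oriented tangent plane $P(t)$, and it passes through $p$. Non-constancy of $\mathbf q(\cdot)$ and the fact that it is not contained in one ruling of $C$ follow from $\mathbf n(\cdot)$ being non-constant together with $K\ne 0$. This is the first condition.

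The main obstacle is the final computation: showing that the mere coincidence of oriented tangent planes forces the contact point $\mathbf q(t)$ onto a ruling of the cone $C$, so that $C$ is tangent to $\Phi$ along an entire curve rather than at isolated points — this is precisely where the rigidity supplied by Laguerre geometry enters. The other delicate but essentially bookkeeping point is the converse of Proposition~\ref{prop-cone-translation}, i.e. reconstructing an oriented cone from an abstract conic satisfying $(\Theta)$ and certifying its opening angle; I expect to dispatch it by the pencil-of-spheres argument indicated above.
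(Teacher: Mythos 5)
Your overall strategy is sound and, frankly, more explicit than anything the paper prints for this statement: Proposition~\ref{prop-translation} is presented there as a ``key observation'' following from the tangency-preserving nature of $\Phi\mapsto\Phi^i$, and the only written argument of this kind is the proof of the cylinder analogue (Proposition~\ref{prop-offset-ruled}), whose reciprocal implication is dismissed as ``analogous''. Your forward direction is correct; your lemma reconstructing the cone from a $(\Theta)$-conic via the pencil of paraboloids of form \eqref{iso-sphere} is fine (and matches the paper's two-spheres description of a cone); and the characteristic-line computation $\mathbf q(t)\in P(t)\cap\dot P(t)$ --- the step you yourself singled out as the main obstacle --- is exactly right: it is what forces the contact points onto the cone and yields tangency along a curve, not just coincidence of tangent planes.

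The genuine gap is one step earlier, in the sentence ``Since the Gauss map of $\Phi$ is a local diffeomorphism, there is a unique smooth curve $\mathbf q(t)$ on $\Phi$ near $p$ with $T_{\mathbf q(t)}\Phi=P(t)$.'' The local inverse of the Gauss map produces a smooth curve $\mathbf q(t)$ whose oriented unit normal is $\mathbf n(t)$, i.e.\ whose tangent plane is \emph{parallel} to $P(t)$; equality of planes additionally requires the offsets to match, $\langle\mathbf n(t),\mathbf q(t)\rangle+h(t)=0$, and this is not delivered by the curvature hypothesis. What you actually know from $P(t)^i\in\Phi^i$ is only that $P(t)$ is tangent to $\Phi$ at \emph{some} point, possibly far from $p$: near $p^i$ the set $\Phi^i$ need not coincide with the image of a neighborhood of $p$, because distant parts of $\Phi$ can contribute points of $\Phi^i$ arbitrarily close to $p^i$ (several ``sheets''), and your arc could a priori leave the local sheet at $p^i$. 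So you must either prove that, near $p^i$, the arc lies in $\{(T_q\Phi)^i: q\text{ near }p\}$, or assume it. In the setting where the proposition is actually applied this is harmless: under Condition (*) the surface $\Phi^i$ is an embedded graph and Proposition~\ref{cor1} recovers the contact point uniquely from $f,f_x,f_y$ via \eqref{eq-cor1}, so points of $\Phi$ and of $\Phi^i$ correspond bijectively and your step is immediate. But in the generality in which Proposition~\ref{prop-translation} is stated, this step needs an explicit justification (the paper itself glosses over the same point by giving no proof). Everything downstream of that step in your argument is correct.
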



Practically the pieces of $\Phi$ where the Gaussian curvature vanishes are developable, hence trivially millable by a conical tool (possibly except the boundary of the set where the mean curvature vanishes).
Thus in what follows we assume that the design surface $\Phi$ satisfies the following condition (likewise, this should be assumed throughout
\cite[\S2.3]{Skopenkov-Pottmann-Grohs:2012}).

\smallskip \noindent
{\bf Condition (*)} $\Phi$ is an oriented surface in $\mathbb{R}^3$ with nowhere vanishing Gaussian curvature such that all the oriented unit normals are distinct from $(0,0,-1)$, and $\Phi^i$ is the graph of a $C^4$ function $f\colon D\to \mathbb{R}$ in a disk $D\subset \mathbb{R}^2$.
\smallskip

This reduces the characterization of surfaces enveloped by a family of cones to the characterization of surfaces (actually graphs of functions) containing a special conic through each point.

\subsection*{Contact order in the space of planes}
\label{ssec-contact-order}

Recall that the derivation of the PDE for ruled surfaces has been based on expressing 3rd order contact between a straight line
and a surface. We take a similar approach in the isotropic model of Laguerre geometry by looking at higher order contact between
an isotropic circle and a surface. We now informally discuss the geometric meaning in the original design space. In the rest of \S\ref{sec-model} we omit the very technical formulations of the statements and proofs, because these subsections are not used in the proofs of main results, however, they help in understanding of the whole concept.


\begin{definition} \label{def-contact} Let $f$ be a $C^n$ function in a disk $D\subset\mathbb{R}^2$. Let $(x(t),y(t),z(t))$, where $t$ runs through an interval $I$, be a $C^n$ curve such that $(\dot{x}(t),\dot{y}(t))\ne 0$ for each $t\in I$. We say that the curve \emph{has contact of order} $n$ with the graph of $f$ at $t=0$, if
\begin{equation*}
    \frac{z(t)-f(x(t),y(t))}{t^n}\to 0\text{ as }t\to 0.
  \end{equation*}
In particular, a curve intersecting the graph for $t=0$ has contact of order $0$; a curve tangent to the graph for $t=0$
has contact of order $1$, etc.; a curve fully contained in the graph has contact of \emph{infinite} order.
\end{definition}

Likewise, two  curves $c_1, c_2$ have contact of order $n$ at a common point if there are regular parameterizations $c_1(t), c_2(t)$
of these curves that agree for some $t=t_0$ in function value and derivatives up to the $n$-th order. A totally analogous definition
holds for two surfaces. Contact order $n$ between a curve $c$ and surface $\Phi$, as given in the above definition, can also be defined as follows: the surface $\Phi$ contains a regular smooth curve $c_1$ which has $n$-th order contact with the curve $c$. This curve $c_1 \subset \Phi$ is not uniquely determined. If there is one such
curve $c_1$, there are infinitely many other curves in $\Phi$ which verify $n$-th order contact with $c$. For example, consider a tangent line $c$ at an elliptic point and a pencil of planes that contain $c$. Then each plane of the pencil intersects $\Phi$ in a curve $c_1$ that each has the first order contact with $c$.

Consider two regularly parametrized curves $C_1^i(t), C_2^i(t)$ in the isotropic model which have contact of order $n \ge 1$ at some common point
$C_1^i(t_0)=C_2^i(t_0)$. The curves as point sets correspond to plane families in design space. Their envelopes are two developable surfaces $C_1, C_2$.
It is not hard to show that these developable surfaces have
a common ruling and contact of order $n$ at each point of the ruling (see \cite{pottwall:2001}). Let us now assume that we have contact of order $n$ between a curve $C^i$ and a surface $\Phi^i$ in the isotropic model.
In design space, this corresponds to a developable surface $C$ and a surface $\Phi$. However, $C$ and $\Phi$ do \emph{not}
have contact of order $n$ if we view these surfaces as point sets: For instance, if a cone $C$ is tangent to a sphere $\Phi$ along a circle, then $C^i$ is contained in $\Phi^i$, hence has contact of arbitrarily high order. But the rulings of $C$ have contact order just $1$ with the sphere $\Phi$.
We have to view $C$ and $\Phi$ as plane sets. This means that there exist (in fact, infinitely many) tangent developable surfaces of $\Phi$ which have $n$-th order contact with $C$. We illustrate this in the following at hand of examples that are very relevant for our setting.

\subsection*{Second order contact}

Let $C^i$ be a curve in the isotropic model. At each point $C^i(t_0)$, the curve has an {\it osculating isotropic circle} $C_o^i$. 
It has 2nd order contact with $C^i$ at $C^i(t_0)$, lies in the osculating plane and its top view is the Euclidean osculating circle of the top view of $C^i$, see Fig.~\ref{fig:Osculation}. In the original space, $C^i$ corresponds to a set of planes which envelope a certain developable surface $C$. The osculating isotropic circle corresponds to a cone of revolution $C_o$. It has 2nd order contact with the developable
surface $C$ along an entire common ruling and is called its {\it osculating cone} along that ruling. The vertex of the cone lies on
the (singular) regression curve of $C$ (see e.g. \cite{pottwall:2001}, Theorem 6.1.4).

Assume now that the curve $C^i$ lies on some surface $\Phi^i$. An isotropic osculating circle $C_o^i$ of $C^i$ has 2nd order
contact with $\Phi^i$. Mapping back to design space, we obtain a developable surface $C$ which is tangent to a surface $\Phi$
along some curve. The isotropic circle $C_o^i$ corresponds to an osculating cone $C_o$ of $C$. That cone does not have 2nd order contact with the surface, if one views the cone as a point set. However, the cone has 2nd order contact in plane space. This means that there exist tangent developable surfaces of $\Phi$ which have 2nd order contact with $C_o$. Among those tangent developables we can take a special one, namely the cone $C_1$ (not necessarily of revolution) which shares the vertex $v$ with $C_o$. It is enveloped by all tangent planes of $\Phi$ that pass through the point $v$ (or, equivalently, are tangent to the sphere with center $v$ and radius zero).
(In general position, $v$ does not belong to the surface $\Phi$, and in particular $v^i$ (isotropic
sphere) is transversal to $\Phi^i$ at the contact point of $C^i$ and $C^i_o$.) The intersection of $C_1$ with a plane $P$ is the contour of $\Phi$ under a central projection from the point $v$ onto the plane $P$. For example, if we take an image plane orthogonal to the axis of
$C_o$, its intersection with $C_o$ is a circle. This circle is the osculating circle of the contour of $\Phi$ for
projection from $v$ onto $P$.

 \begin{figure}[!tb]
\vrule width0pt
\begin{overpic}[width=0.45\columnwidth]{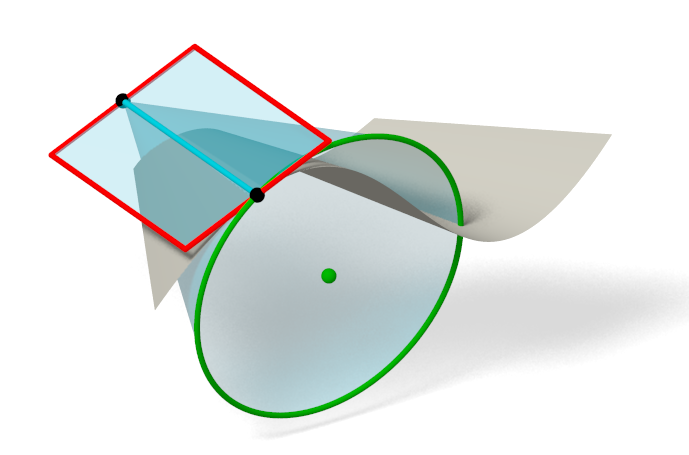}
{\small
	\put(0,-2){(a)}
	\put(60,8){$C_o$}
	\put(80,40){$C$}
  }
\end{overpic}
\hfill
\begin{overpic}[width=0.45\columnwidth]{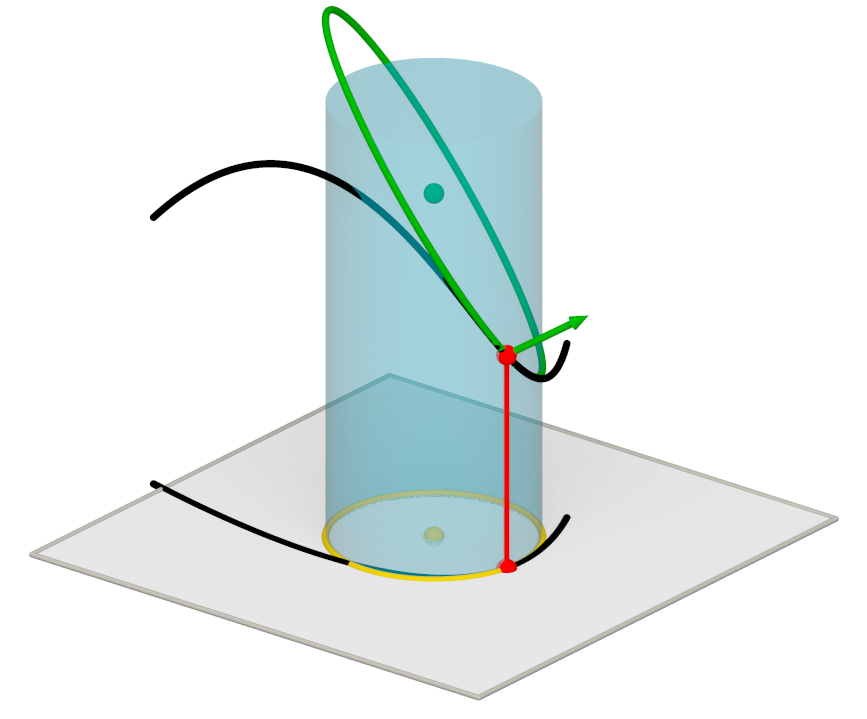}
{\small
	\put(0,-2){(b)}
	\put(75,20){$z=0$}
	\put(20,63){$C^i$}
	\put(48,40){$C^i(t_0)$}
	\put(32,76){$C_o^i$}	
  }
\end{overpic}
\hfill\vrule width0pt\\
\vspace{-12pt}
\caption{Osculating cone of a developable surface. a) In design space, a developable surface $C$ (grey) has second order contact with a cone $C_o$ (transparent, green base). They share a tangent plane along the common ruling (blue), however, they locally intersect.  b) In the isotropic model, the developable surface $C$ corresponds to a 3D curve $C^i$ and the osculating cone $C_o$ is mapped to an osculating isotropic circle $C_o^i$ (green) that has second order contact with $C^i$. The top view is a circle (yellow) in the plane $z=0$ that osculates the projection of $C^i$.}
\label{fig:Osculation}
\end{figure}


 \begin{figure}[!tb]
\vrule width0pt
\begin{overpic}[width=0.45\columnwidth]{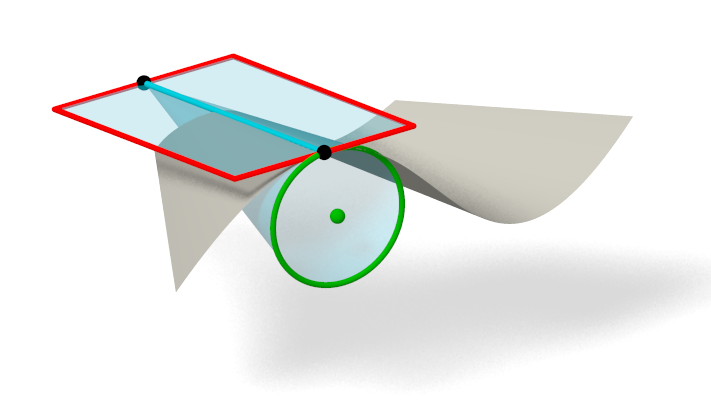}
{\small
	\put(0,-2){(a)}
	\put(53,15){$C_o$}
	\put(75,35){$C$}
  }
\end{overpic}
\hfill
\begin{overpic}[width=0.45\columnwidth]{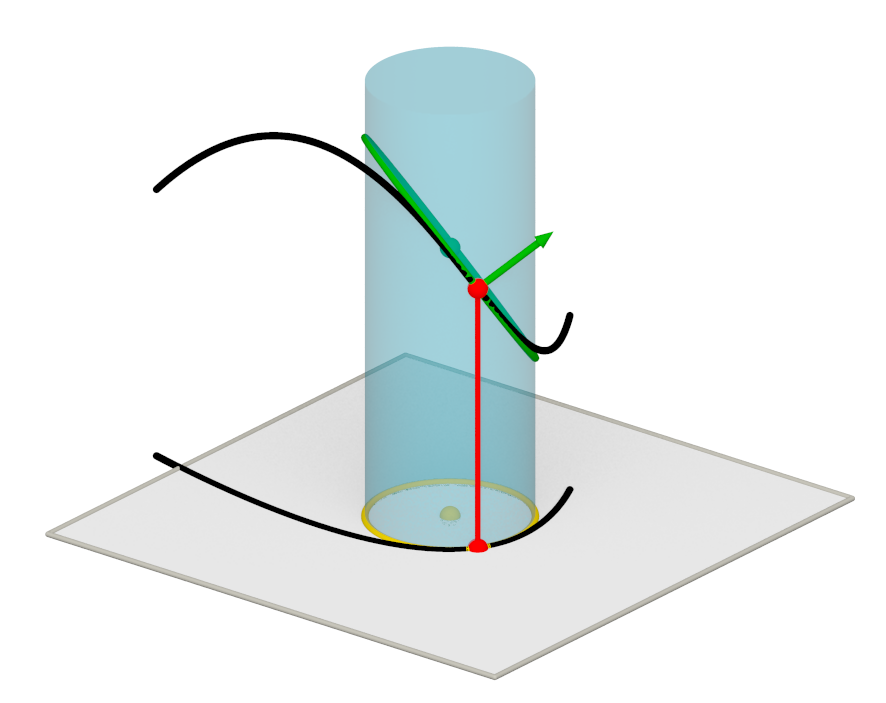}
{\small
	\put(0,-2){(b)}
	\put(75,20){$z=0$}
	\put(20,63){$C^i$}
	\put(45,60){$C_o^i$}	
  }
\end{overpic}
\hfill\vrule width0pt\\
\vspace{-12pt}
\caption{Hyperosculation. a) For certain positions of the ruling (blue) the osculating cone $C_o$ can have third order contact with a
developable surface $C$, i.e., it becomes hyperosculating.  b) In the isotropic model, this situation corresponds to hyperosculation between the isotropic circle (green) and the 3D curve $C^i$ that represents the developable surface $C$. In the top view, the isotropic circle (yellow) hyperosculates the projection of $C^i$.}
\label{fig:Hyperosculation}
\end{figure}

{\em Mannheim sphere.} Like in Euclidean geometry, there is a Meusnier's theorem
in isotropic geometry: Given a surface $\Phi^i$, and a point $P^i \in \Phi^i$
with a surface tangent $T^i$. Then, the osculating isotropic  circles of all curves $C^i \subset \Phi^i$ which
pass through $P^i$ with tangent $T^i$, lie in an isotropic sphere. Mapping back to design space, we
obtain Mannheim's theorem \cite{kruppa:1957}: The osculating cones of all developable surfaces $C$
which are tangent to a given surface $\Phi$ and have a common ruling $R$ (tangent to $\Phi$),  are tangentially circumscribed to a sphere ({\emph{Mannheim sphere}}). This gives an overview of all cones
which have 2nd order contact (as plane sets) with a given surface at a fixed point and allows one to apply additional constraints, for example, on the opening angle of the cone.

\subsection*{Third order contact}

In Euclidean geometry, there are results on circles which have third order contact with a given
surface at a given point. These have even been proposed for CNC machining with a cylindrical cutter
since the bottom circle of the cutter will actually generate the shape and thus 3rd order contact
leads to a good surface finish, at least in theory \cite{CurvCatering1-1993,CurvCatering2-1993,MillingCircles-2015}.
In fact, 2nd order contact is in general not enough because an osculating circle of a surface will locally change
the side of the surface and thus cause gauging. This local interference is not present for 3rd order
contact. In practice, it is hard to find a path which
leads the cutter in such a way that the bottom circle stays in third order contact with the surface.
However, knowing that third order contact is a limit of a double contact,
Kim et al.  \cite{MillingCircles-2015} used hyperosculating circles for initializing an optimization algorithm
which leads the cutter such that it has a double contact with the target surface.

We will derive analogous characterizations of hyperosculating circles in isotropic geometry,
which we expect to have applications in the original design space. There, one obtains hyperosculating
cones (in the plane geometric sense; see Fig.~\ref{fig:Hyperosculation}). Moreover, near such positions one can find
doubly tangent cones. This is not exploited for CNC machining in the present paper, but could be the topic of
future research.


 \begin{figure}[!tb]
\vrule width0pt
\begin{overpic}[width=0.45\columnwidth]{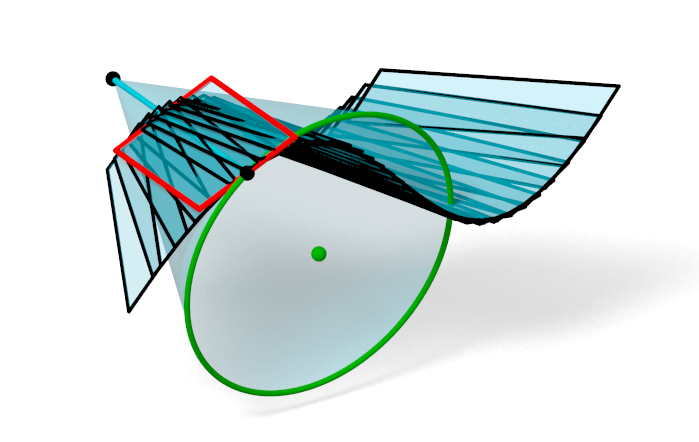}
{\small
	\put(0,-2){(a)}
  }
\end{overpic}
\hfill
\begin{overpic}[width=0.45\columnwidth]{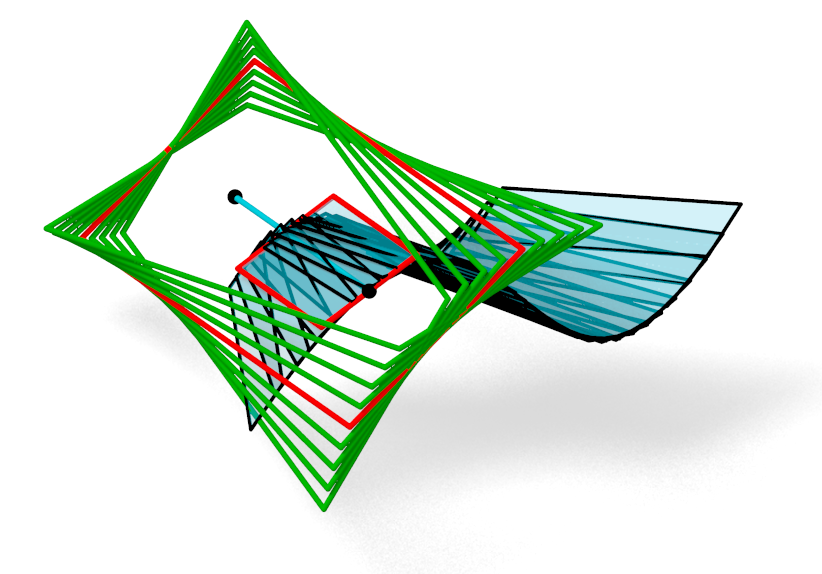}
{\small
	\put(0,-2){(b)}
  }
\end{overpic}
\hfill\vrule width0pt\\
\caption{Osculation in the space of planes. (a) A developable surface is represented as a one parameter family of tangent planes (transparent) and is osculated by a cone (green base) along a common tangent plane (red). (b) The osculating cone is also represented by one-parameter family of tangent planes (green) and the two families osculate at the red plane. Observe that the two red planes are identical.}
\label{fig:OsculationPlanes}
\end{figure}

\begin{figure}[!tb]
\vrule width0pt\hfill
\begin{overpic}[width=0.68\columnwidth]{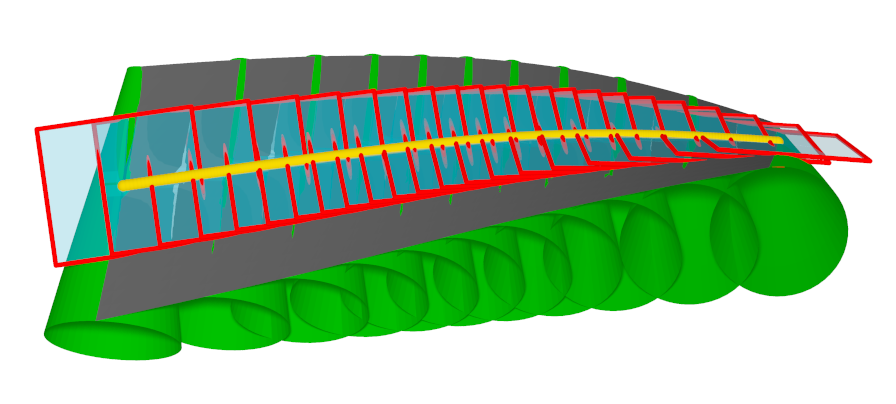}
{\small
    \put(0,-2){(a)}
	\put(20,39){$\Phi$}
    \put(9,37){$C(t)$}
    \put(5,33){$D(t)$}
   \put(5.2,12){\line(2,3){8}}
   \put(1,10.3){$c(t)$}
}
\end{overpic}
\hfill
\begin{overpic}[width=0.28\columnwidth]{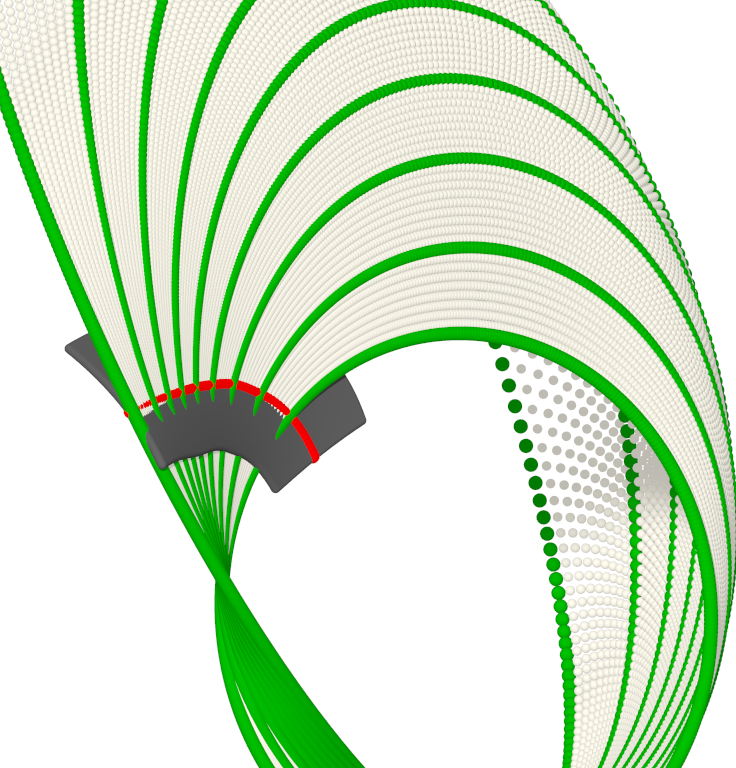}
{\small
  \put(0,-2){(b)}
   \put(51,49){$C^i(t)$}
   \put(38,34){$D^i(t)$}
   \put(3,47){$\Phi^i$}
  }
\end{overpic}
\hfill\vrule width0pt\\
\vspace{-12pt}
\caption{ Approximation of a general surface. (a) A reference surface $\Phi$ (grey) is approximated by an envelope of a moving cone (green) in the neighborhood of the contact curve $c(t)$ (yellow). Along this curve, they share a one-parameter family of tangent planes $D(t)$ (red). (b) In the isotropic space, the tangent planes are mapped to a curve $D^i(t)$ (red) that lies on the isotropic image  $\Phi^i$ of the surface. The tangent planes of each cone are mapped to an isotropic circle (green) 
intersecting the curve  $D^i$ transversely.}
\label{fig:PlaneContact}
\end{figure}

\subsection*{Higher order contact between a surface and an envelope of cones}

 Let us now return to the application in CNC machining with a conical tool. We want to approximate a given surface $\Phi$ with nowhere vanishing Gaussian curvature
(design surface) by an envelope of a moving rotational cone (tool). We claim that \emph{having contact of order $n$ between tool positions
and $\Phi$ in the space of planes guarantees
contact of order $n$ between $\Phi$ and
the envelope surface $\Psi$ generated by the moving
tool, and vice versa. We only have to make
sure that the tool is always moved into an appropriate} direction. We will now show that almost all directions are appropriate, except
for the one which is conjugate to the ruling of the tool at the contact point.

To explain that, let us first have a look into the isotropic model.
There, the tool positions $C(t)$ appear as isotropic circles $C^i(t)$ which have contact of order $n$ with $\Phi^i$. As long as the curve of
contact points $D^i(t)$ of these circles is \emph{transversal} to the circles, we have contact of order $n$ between
the surface $\Psi^i$ generated by the isotropic circles  (envelope in the isotropic model) and $\Phi^i$, see Fig.~\ref{fig:PlaneContact}. Mapping back to $\mathbb{R}^3$,
we obtain contact of order $n$ between the surface $\Phi$ and the envelope $\Psi$. We just have to clarify how to recognize
the mentioned transversality in the isotropic model directly in design space $\mathbb{R}^3$. The contact curve $D^i$ corresponds to a developable surface $D \subset \mathbb{R}^3$ which is the envelope of the common tangent planes $D(t)$ of the moving tool and $\Phi$ at the cutter contact points. In other words, this developable surface $D$ is tangent to $\Phi$ along the set of cutter contact points $c(t)$,
which form a curve $c \subset \Phi$.  The mentioned transversality in the isotropic model means that none
of the isotropic circles $C^i(t)$ is tangent to the contact curve $D^i$. As we have already discussed, two curves in the isotropic
model share a common point and tangent,
if the corresponding developable surfaces
in Euclidean space share a common tangent plane and ruling.
Hence, the cone rulings $r_C(t)$ through the contact points $c(t)$ have to be different from the rulings $r_D(t)$ of $D$.

Essentially, we are still in the space of planes. To get information about an appropriate contact curve $c(t)$, we recall
a classical result: Given a curve $c$ on a surface $\Phi$, the developable surface $D$ which is tangent to $\Phi$
along $c$ has rulings $r_D(t)$ which are conjugate to the tangents of $c$ (see, e.g., \cite{pottwall:2001}, page 334).
Rulings $r_C(t)$ and $r_D(t)$ are different, if their conjugate directions with respect to $\Phi$ are different.
Hence, the tangent of the cutter contact curve $c$ has to be different from the direction
which is conjugate to the cutter's ruling $r_C(t)$. Ideally, one will want to move
the cutter orthogonal to that conjugate direction to obtain the widest machined strips.

Note that we are interested here in contact of order $n\ge 2$, which essentially means $n=2$
or $n=3$. There, the envelope of the moving tool and the target surface $\Phi$ share the conjugacy
relation at the contact points. The conjugate direction of the cone ruling $r_C(t)$ at the contact point
with respect to the envelope of the cones is the tangent to the characteristic (since the cone is the tangent
developable of the envelope along the characteristic). Hence, an appropriate direction of the tool movement is one which is transversal to the characteristic. This is exactly what one would expect. But note that when we want to plan the motion and want to move from one position to the
next, we can use the conjugate direction of $r_C(t)$ with respect to $\Phi$. Moving as orthogonal
as possible to that conjugate direction and satisfying other machining constraints, will lead to a next appropriate cone position. For actual machining, osculation ($n=2$) will lead to undercutting. However, one can use a motion with an osculating envelope as a guide and work with a slightly smaller cutting tool to avoid undercutting. Recall once again that the discussion throughout this section is valid under certain technical assumptions like general position and non-vanishing Gaussian curvature.

\section{Surfaces containing a special conic through each point}
\label{sec-conics}

In this section we characterize the surfaces containing a conic satisfying condition~$(\Theta)$ through each point (Theorem~\ref{conj-reciprocal} below). This is similar to the characterization of ruled surfaces in Section~\ref{sec-developable}. We consider the graph of a smooth function $f$.
The conics on the graph are parametrized by trigonometric functions. Differentiation with respect to the parameter gives a system of algebraic equations on the tangential direction to the top view of the conic at a given point. Solvability of the system is the required condition on $f$. Conversely, given $f$ such that the system has a solution, the conics are reconstructed as follows. First we pick up a suitable solution to get a vector field tangential to the top views of the future conics. Then we prove that the integral curves of the field are circles. Finally we show that the restriction of $f$ to these circles is linear. Such reconstruction works under the minor restrictions that the solution of the system is not multiple and continuously depends on the point.

\subsection*{Conics parametrization}

\begin{proposition}\label{prop-conic} Each conic satisfying condition~$(\Theta)$ can be parametrized as
\begin{equation}\label{eq-conj-reciprocal}
  \begin{cases}
    x(t) &= x+v\sin t+u(1-\cos t), \\
    y(t) &= y-u\sin t+v(1-\cos t), \\
    z(t) &= z+a\sin t+b(1-\cos t),
  \end{cases}
  \end{equation}
where $a,b,u,v,x,y,z\in \mathbb{R}$ satisfy
\begin{equation}\label{eq-conj-characterization1}
\left(x^2+y^2+1+2xu+2yv\right)^2
-4\tan^2\theta\left(u^2+v^2\right)=0.
\end{equation}
\end{proposition}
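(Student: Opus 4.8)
The plan is to unpack condition~$(\Theta)$ into a \emph{shape} part (the top view of the conic is a Euclidean circle) and a \emph{metric} part (a constraint on that circle coming from the prescribed intrinsic radius $\pi/2-\theta$), then to parametrize the conic simply by rotating about the center of its top view, and finally to translate the metric part into~\eqref{eq-conj-characterization1} by a stereographic-projection computation.

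First, a conic is a plane curve, and stereographic projection carries a spherical circle not through the projection center to a genuine Euclidean circle. Hence condition~$(\Theta)$ forces the top view of the given conic $C^i$ to be a Euclidean circle with some center $m=(m_1,m_2)$ and radius $\rho>0$; being bounded, $C^i$ is an ellipse or circle lying in a non-vertical plane, so on $C^i$ the height $z$ is an affine function of $(x,y)$, say $z=z_0+\lambda x+\mu y$.

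Next I would set up the parametrization. For a prescribed point $(x,y,z)\in C^i$, its top view lies on the top-view circle, so $(x,y)=m-(u,v)$ for some $(u,v)$ with $u^2+v^2=\rho^2$; equivalently $m=(x+u,y+v)$. Rotating the vector $(-u,-v)$ about $m$ through angle $t$ gives exactly the first two lines of~\eqref{eq-conj-reciprocal} (the consistency of the $x$- and $y$-equations is automatic, as both encode the same rotation). Substituting these into $z=z_0+\lambda x+\mu y$ and collecting the $\sin t$- and $(1-\cos t)$-terms yields $z(t)=z+a\sin t+b(1-\cos t)$, where $a,b$ range over all of $\mathbb{R}$ as the slope $(\lambda,\mu)$ varies. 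This establishes~\eqref{eq-conj-reciprocal} and records that the top-view circle has center $m=(x+u,y+v)$ and satisfies $\rho^2=u^2+v^2$.

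It remains to show that this circle is the stereographic image of a spherical circle of intrinsic radius $\pi/2-\theta$ exactly when~\eqref{eq-conj-characterization1} holds. I would write inverse stereographic projection from $(0,0,-1)$ as $(X,Y)\mapsto(X^2+Y^2+1)^{-1}(2X,\,2Y,\,1-X^2-Y^2)$, and a spherical circle as the section by a plane $\alpha\xi_1+\beta\xi_2+\gamma\xi_3=\delta$ with $\alpha^2+\beta^2+\gamma^2=1$; its intrinsic radius equals $\pi/2-\theta$ precisely when $|\delta|=\cos(\pi/2-\theta)=\sin\theta$, and not passing through $(0,0,-1)$ reads $\gamma+\delta\neq0$. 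Substituting the projection formula into the plane equation shows the top view is the circle $(\gamma+\delta)(X^2+Y^2)-2\alpha X-2\beta Y+(\delta-\gamma)=0$, whence $m=(\gamma+\delta)^{-1}(\alpha,\beta)$, $\rho^2=(1-\delta^2)/(\gamma+\delta)^2$, and $|m|^2-\rho^2+1=2\delta/(\gamma+\delta)$. Thus $(|m|^2-\rho^2+1)^2=4\delta^2/(\gamma+\delta)^2$ equals $4\rho^2\tan^2\theta=4(1-\delta^2)\tan^2\theta/(\gamma+\delta)^2$ iff $\delta^2=\sin^2\theta$. Finally, with the $m,\rho$ of the previous paragraph one has $|m|^2-\rho^2+1=x^2+y^2+1+2xu+2yv$ and $\rho^2=u^2+v^2$, so the identity $(|m|^2-\rho^2+1)^2=4\rho^2\tan^2\theta$ is literally~\eqref{eq-conj-characterization1}.

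The only genuinely delicate step is this last computation: keeping the normalization $\alpha^2+\beta^2+\gamma^2=1$ straight and matching ``intrinsic radius $\pi/2-\theta$'' with ``$|\delta|=\sin\theta$'' so as to land on the constant $4\tan^2\theta$. The square in~\eqref{eq-conj-characterization1} conveniently absorbs the sign of $\delta$, which reflects the two orientations of the cone's normals; everything else is a routine change of variables.
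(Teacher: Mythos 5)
Your proof is correct, and its first half (the top view is a Euclidean circle, parametrize it by rotation about the center $(x+u,y+v)$, and use that the conic lies in a non-vertical plane so that $z$ is affine in $(x,y)$, giving \eqref{eq-conj-reciprocal}) coincides with the paper's argument. Where you diverge is in deriving the constraint \eqref{eq-conj-characterization1}: you do it fully analytically, writing the spherical circle as the section of the unit sphere by a plane $\alpha\xi_1+\beta\xi_2+\gamma\xi_3=\delta$ with unit normal, pushing it through the explicit stereographic projection, and reading off the center $m$, the radius $\rho$, and the identity $|m|^2-\rho^2+1=2\delta/(\gamma+\delta)$, so that the intrinsic-radius condition becomes $|\delta|=\sin\theta$ and hence \eqref{eq-conj-characterization1}. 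The paper instead argues synthetically: it takes the points $A,B$ of the top-view circle nearest to and farthest from the origin, uses that the inverse stereographic image is a spherical circle of intrinsic radius $\angle ASB$ (an inscribed-angle fact, $S=(0,0,-1)$), and applies the tangent addition formula to get $\pm\cot\theta=2\sqrt{u^2+v^2}\,/\,(1+(x+u)^2+(y+v)^2-u^2-v^2)$, then squares. The paper's route is shorter but must track sign cases (origin inside or outside the circle, acute versus obtuse angle); your route is case-free because the square in \eqref{eq-conj-characterization1} absorbs the sign of $\delta$, it reproves along the way that the projection of a spherical circle avoiding the projection center is a Euclidean circle (which the synthetic argument takes as known), and it yields explicit formulas for $m$ and $\rho$ in terms of the plane of the spherical circle that could be reused. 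Both are complete proofs of the proposition.
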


\begin{proof}[Proof of Proposition~\ref{prop-conic}]
Let $(x,y,z)$ be a point on the conic and $(x+u,y+v)$ be the center of the top-view circle. Clearly, then the circle is parametrized by $x(t)$ and $y(t)$ from~\eqref{eq-conj-reciprocal}. Since a conic is a planar curve, $z(t)$ must be a linear function in $x(t)$ and $y(t)$, and we arrive at~\eqref{eq-conj-reciprocal} for some $a,b\in\mathbb{R}$.

Now turn to condition~$(\Theta)$. Let
$A$ and $B$ be the points of the top-view circle which are the closest and the furthest from the origin $O$ (or just opposite points, if $O$ is the center). The inverse stereographic projection of the circle from the point $S=(0,0,-1)$ is a circle of intrinsic radius $\angle ASB$ in the unit sphere. Thus
\begin{multline*}
\pm\cot \theta
=\tan ASB
=\frac{\tan BSO\mp\tan ASO}{1\pm \tan ASO\tan BSO}
=\frac{OB\mp OA}{1\pm OA\cdot OB}
=\frac{2\sqrt{u^2+v^2}}{1+(x+u)^2+(y+v)^2-u^2-v^2},
\end{multline*}
where the choice of sign in the left-hand side depends on if
$\angle ASB$ is acute or obtuse, and the other signs
depend on if $O$ is outside or inside the top-view circle.
We arrive at~\eqref{eq-conj-characterization1}.
\end{proof}

\subsection*{Derivation of the system}

Let us derive PDEs for functions whose graphs contain a conic satisfying condition~($\Theta$) through each point.
We are not actually using that the entire conic is contained in the graph; a sufficiently high contact suffices.

\begin{example} The graph of the function $f(x,y)=\frac{y^2}{x^2+y^2}$, where $(x,y)\ne (0,0)$, is covered by a $2$-dimensional family of conics~\eqref{eq-conj-reciprocal} with $u=-\frac{x}{2}$, $v=-\frac{y}{2}$, $z=\frac{y^2}{x^2+y^2}$, $a=\frac{xy}{x^2+y^2}$, $b=\frac{x^2-y^2}{2(x^2+y^2)}$. The ones with $x^2+y^2=\cot^2\theta$ satisfy condition~$(\Theta)$. We return to this example in Section~\ref{sec-results}.
\end{example}

\begin{proposition}\label{prop-tangency-order} Conic~\eqref{eq-conj-reciprocal} has contact of order $2$ with the graph of $f$ (``\emph{osculation}''), if and only if
\begin{equation}\label{eq-prop-tangency-order2}
\begin{cases}
z=f(x,y),&\\
a=f_x v-f_y u, &\\
b=f_x u+f_y v+f_{xx}v^2-2f_{xy}uv+f_{yy}u^2. &
\end{cases}
\end{equation}
The contact order is $3$  (``\emph{hyperosculation}''), if and only if in addition
\begin{equation}\label{eq-conj-characterization2}
f_{xxx}v^3-3f_{xxy}v^2u+3f_{xyy}vu^2-f_{yyy}u^3 
+3(f_{xx}-f_{yy})uv+3f_{xy}(v^2-u^2)=0.
\end{equation}
The contact order is $4$, if and only if in addition
\begin{multline}\label{eq-conj-characterization3}
f_{xxxx}v^4-4f_{xxxy}v^3u+6f_{xxyy}v^2u^2-4f_{xyyy}vu^3+f_{yyyy}u^4 \\
\qquad +6 u v^2 f_{xxx}+
6v(v^2-2u^2)f_{xxy}+6u(u^2-2v^2)f_{xyy}+6 u^2 v f_{yyy} \\
\qquad +3 (u^2- v^2)(f_{xx}-f_{yy})+12uvf_{xy}=0.
\end{multline}
\end{proposition}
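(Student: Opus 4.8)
The plan is to compute the Taylor expansion of the discrepancy function
$$\delta(t) := z(t) - f(x(t),y(t))$$
at $t=0$, where $x(t),y(t),z(t)$ are given by the parametrization~\eqref{eq-conj-reciprocal}, and to read off the contact-order conditions from the vanishing of successive coefficients. By Definition~\ref{def-contact}, contact of order $n$ at $t=0$ means $\delta(t)/t^n\to 0$, i.e. $\delta(0)=\dot\delta(0)=\dots=\delta^{(n)}(0)=0$ (note $\delta$ is $C^n$ since $f$ is $C^n$ and the parametrization is smooth). So the task is purely a matter of organizing the chain-rule bookkeeping.

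First I would record the derivatives of the parametrization at $t=0$. From~\eqref{eq-conj-reciprocal} one gets $x(0)=x$, $y(0)=y$, $z(0)=z$; $\dot x(0)=v$, $\dot y(0)=-u$, $\dot z(0)=a$; $\ddot x(0)=u$, $\ddot y(0)=v$, $\ddot z(0)=b$; and then $\dddot x(0)=-v$, $\dddot y(0)=u$, $\dddot z(0)=-a$, while the fourth derivatives are $x^{(4)}(0)=-u$, $y^{(4)}(0)=-v$, $z^{(4)}(0)=-b$ (the pattern simply repeats with a sign flip every two orders, since $\sin$ and $1-\cos$ satisfy this). The key point is that the top-view tangent direction at $t=0$ is $(\dot x,\dot y)=(v,-u)$, which explains why $u,v$ enter the later equations in the ``rotated'' combination seen in~\eqref{eq-conj-characterization2} and~\eqref{eq-conj-characterization3}.

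Next I would differentiate $\delta$ using the chain rule. Order $0$: $\delta(0)=z-f(x,y)$, giving $z=f(x,y)$. Order $1$: $\dot\delta(0)=\dot z - f_x\dot x - f_y\dot y = a - f_xv + f_yu$, giving $a = f_xv - f_yu$. Order $2$: $\ddot\delta(0)=\ddot z - f_x\ddot x - f_y\ddot y - f_{xx}\dot x^2 - 2f_{xy}\dot x\dot y - f_{yy}\dot y^2 = b - f_xu - f_yv - f_{xx}v^2 + 2f_{xy}uv - f_{yy}u^2$, giving the third line of~\eqref{eq-prop-tangency-order2}. For order $3$ one differentiates once more; the terms involving $\dddot x,\dddot y,\dddot z$ recombine, using the already-established equations of~\eqref{eq-prop-tangency-order2}, with the second-derivative terms, and after cancellation exactly~\eqref{eq-conj-characterization2} remains. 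For order $4$ the same procedure, now invoking~\eqref{eq-prop-tangency-order2} and~\eqref{eq-conj-characterization2}, collapses the fourth-order condition to~\eqref{eq-conj-characterization3}. In each of the last two steps the ``$-$'' signs in the third and fourth derivatives of the parametrization are what produce the lower-order correction terms (the terms $3(f_{xx}-f_{yy})uv+\dots$ in~\eqref{eq-conj-characterization2} and the analogous block in~\eqref{eq-conj-characterization3}).

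The main obstacle is purely computational: faithfully expanding $\dddot\delta(0)$ and $\delta^{(4)}(0)$ via Fa\`a di Bruno / the multivariate chain rule, keeping track of all mixed partials up to order $3$ (resp. $4$) and all products of the parametrization derivatives, and then verifying that substituting the previously derived relations~\eqref{eq-prop-tangency-order2} (resp. also~\eqref{eq-conj-characterization2}) genuinely reduces the expression to the stated clean form. I expect no conceptual difficulty --- the logical structure (each new order adds one new equation, given the previous ones) is immediate from Definition~\ref{def-contact} --- but the algebra for the fourth-order case is lengthy and best done with symbolic computation; I would present only the final simplified identities, as the statement does.
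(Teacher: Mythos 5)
Your proposal is correct and follows essentially the same route as the paper: consecutive differentiation of $z(t)-f(x(t),y(t))$ at $t=0$ using the derivatives of the parametrization~\eqref{eq-conj-reciprocal} (with $\dot x(0)=v$, $\dot y(0)=-u$, etc.), with each new vanishing derivative yielding the next condition after substituting the previously obtained relations. The paper's proof likewise only displays the second-order computation and leaves the third- and fourth-order bookkeeping implicit, so your plan, including the justification via Taylor's theorem that contact of order $n$ amounts to the vanishing of the first $n$ derivatives, matches it.
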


\begin{proof}[Proof of Proposition~\ref{prop-tangency-order}]
   The proof is by consecutive differentiation of $z(t)-f(x(t),y(t))$ with respect to $t$ and evaluating at $t=0$. For instance, the second derivative is
   \begin{multline*}
   -a\sin t+b\cos t-f_x(u\cos t-v\sin t)-f_y(v\cos t+u\sin t)\\
   -f_{xx}(v\cos t+u\sin t)^2-2f_{xy}(v\cos t+u\sin t)(v\sin t-u\cos t)
   -f_{yy}(v\sin t-u\cos t)^2.
   \end{multline*}
   For contact of order $2$, this must vanish at $t=0$, which gives the third equation of~\eqref{eq-prop-tangency-order2}.
\end{proof}

Equation~\eqref{eq-prop-tangency-order2} is an expression that links together the point $(x,y,z)$ in the isotropic space, the center $(x+u,y+v,0)$ of the Euclidean circle in the plane $z=0$, two parameters $a$ and $b$ that control the inclination of the plane that contains the isotropic circle, and the derivatives of the function $f$; see Fig.~\ref{fig:IsotropicCircle}. The remaining equations~\eqref{eq-conj-characterization2}--\eqref{eq-conj-characterization3} together with~\eqref{eq-conj-characterization1} give a nontrivial restriction on the function $f$ itself.

 \begin{figure}[!tb]
\vrule width0pt \hfill
\begin{overpic}[width=0.55\columnwidth]{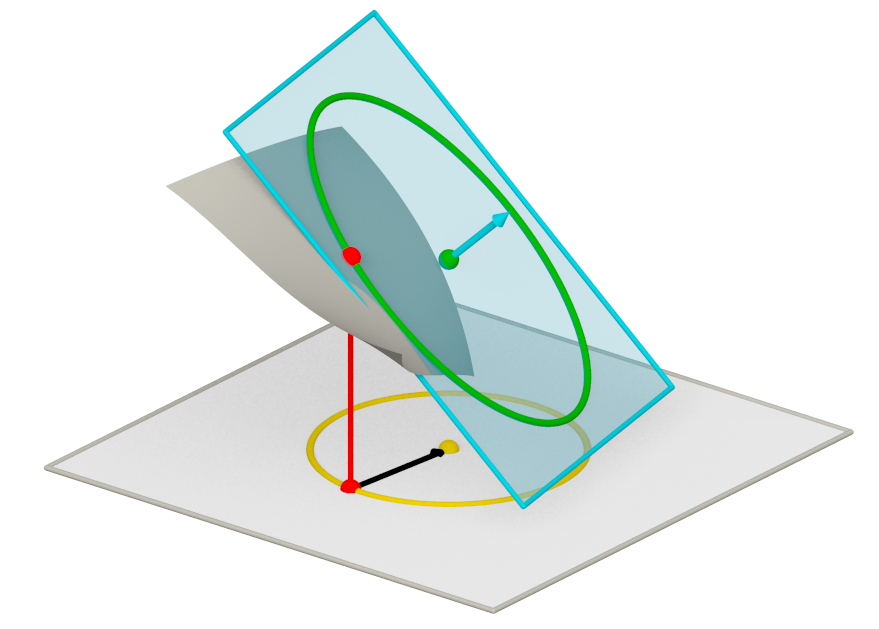}
{\small
	\put(15,40){$z=f(x,y)$}
	\put(35,12){$(x,y,0)$}
	\put(45,15){$(u,v)$}
	\put(80,20){$z=0$}
	\put(48,50){$C_o^i$}
  }
\end{overpic}
\hfill\vrule width0pt\\
\vspace{-12pt}
\caption{Osculating isotropic circle in a given direction. The graph of a function $f(x,y)$ (gray) is osculated by an isotropic circle $C_o^i$ (green) at a point $(x,y,f(x,y))$. The top view of the osculating isotropic circle is a Euclidean circle (yellow) that passes through $(x,y,0)$ and is centered at $(x+u,y+v,0)$. The constraints for osculation between $f$ and $C_o^i$ are given in Eq.~\eqref{eq-prop-tangency-order2}.}
\label{fig:IsotropicCircle}
\end{figure}

\begin{corollary}\label{conj-characterization}
Let $f$ be a $C^4$ function in a disk $D\subset\mathbb{R}^2$.
If through each point of the surface $z=f(x,y)$ there passes an arc of a conic satisfying condition~$(\Theta)$ and completely contained in the surface, then for each $(x,y)\in D$ three equations~\eqref{eq-conj-characterization1},\eqref{eq-conj-characterization2},\eqref{eq-conj-characterization3} have a common real solution $(u,v)$.
\end{corollary}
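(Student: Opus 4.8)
The plan is to deduce the statement directly from Propositions~\ref{prop-conic} and~\ref{prop-tangency-order}, with no new computation. Fix a point $(x_0,y_0)\in D$ and set $z_0:=f(x_0,y_0)$. By hypothesis there is an arc $\gamma$ of a conic satisfying condition~$(\Theta)$ that is completely contained in the graph of $f$ and passes through $(x_0,y_0,z_0)$.

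First I would apply Proposition~\ref{prop-conic} to $\gamma$, taking the base point of the parametrization~\eqref{eq-conj-reciprocal} to be $(x_0,y_0,z_0)$. This produces constants $a,b,u,v\in\mathbb{R}$ such that, with $(x,y,z)=(x_0,y_0,z_0)$, the arc $\gamma$ is parametrized by~\eqref{eq-conj-reciprocal}, and these constants satisfy~\eqref{eq-conj-characterization1}. Observe that~\eqref{eq-conj-characterization1} forces $(u,v)\ne(0,0)$, since otherwise its left-hand side would equal $(x_0^2+y_0^2+1)^2>0$. Consequently $(\dot x(t),\dot y(t))=(v\cos t+u\sin t,\,-u\cos t+v\sin t)$ never vanishes, so $\gamma$ is a genuine regular $C^\infty$ curve and Definition~\ref{def-contact} applies to it.

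Second, since the whole arc $\gamma$ lies in the graph, the identity $z(t)=f(x(t),y(t))$ holds for all $t$ in a neighbourhood of $0$; thus $z(t)-f(x(t),y(t))\equiv 0$ and $\gamma$ has contact of order $4$ (indeed of infinite order) with the graph at $t=0$. Applying Proposition~\ref{prop-tangency-order} in the case of contact order $4$, we conclude that equations~\eqref{eq-prop-tangency-order2}, \eqref{eq-conj-characterization2} and~\eqref{eq-conj-characterization3} all hold at $(x_0,y_0)$. The first line of~\eqref{eq-prop-tangency-order2} is merely $z_0=f(x_0,y_0)$, which holds by choice of base point, and its remaining two lines only express the auxiliary parameters $a,b$ in terms of $u,v$ and the derivatives of $f$; these play no further role. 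What remains is that~\eqref{eq-conj-characterization2} and~\eqref{eq-conj-characterization3} involve only $u,v$ and the partial derivatives of $f$ at $(x_0,y_0)$, and together with~\eqref{eq-conj-characterization1} they are satisfied by the real pair $(u,v)$. Hence $(u,v)$ is a common real solution of~\eqref{eq-conj-characterization1}, \eqref{eq-conj-characterization2}, \eqref{eq-conj-characterization3}, as required; since $(x_0,y_0)$ was arbitrary, this holds for all $(x,y)\in D$.

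Because both propositions invoked are already proved, there is no substantial obstacle here; the step most prone to slips is the bookkeeping of which equations in Proposition~\ref{prop-tangency-order} genuinely constrain $f$ (namely~\eqref{eq-conj-characterization2}–\eqref{eq-conj-characterization3}, supplemented by~\eqref{eq-conj-characterization1} coming from Proposition~\ref{prop-conic}) as opposed to those that merely fix the conic's auxiliary data $a,b,z$. One should also note that the degenerate sub-case of condition~$(\Theta)$ in which the top view is a line — a parabola rather than a circle — is already absorbed into Proposition~\ref{prop-conic} as a limit of~\eqref{eq-conj-reciprocal}, so it needs no separate argument here.
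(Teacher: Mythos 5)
Your proof is correct and is essentially the argument the paper leaves implicit: Proposition~\ref{prop-conic} supplies the parametrization~\eqref{eq-conj-reciprocal} together with~\eqref{eq-conj-characterization1}, and containment of the arc in the graph gives contact of order at least $4$, so Proposition~\ref{prop-tangency-order} yields \eqref{eq-conj-characterization2} and \eqref{eq-conj-characterization3} for the same real pair $(u,v)$. Only your closing remark is slightly off: condition~$(\Theta)$ requires the spherical circle to avoid the projection center $(0,0,-1)$, so the line/parabola top-view case simply cannot occur, rather than being ``absorbed as a limit'' of~\eqref{eq-conj-reciprocal}; this does not affect the argument.
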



Let us discuss this result from a computational viewpoint: The existence of a common real solution implies (but is not equivalent to) vanishing of the resultant of the $3$ polynomials in $(u,v)$ in the left-hand sides of the equations. However, one ends up with a huge expression that is hardly useful. In contrast, it is computationally more economical to first solve the system of first \emph{two} equations~\eqref{eq-conj-characterization1} and~\eqref{eq-conj-characterization2}, and then verify if the third one~\eqref{eq-conj-characterization3} is satisfied. 

To solve the system of \emph{two} equations~\eqref{eq-conj-characterization1} and~\eqref{eq-conj-characterization2} (under additional general position assumptions), one first finds the real roots $t$ of the polynomial
\begin{multline}\label{eq-simplified-system}
    (x^2+y^2+1)\left(\left(t^2-1\right)^3f_{xxx}-6 \left(t^2-1\right)^2 t f_{xxy}+12 \left(t^2-1\right) t^2 f_{xyy}-8 t^3 f_{yyy}\right)\\
    +
    6 \left(2 \left(t^3-t\right) (f_{xx}-f_{yy})+\left(t^4-6 t^2+1\right) f_{xy}\right) \left(t^2\tan\theta-t^2y-2 t x+y+\tan\theta\right)=0,
\end{multline}
which has degree $6$  unless $(x^2+y^2+1)f_{xxx}+6f_{xy}(\tan\theta-y)=0$,
and then comes up with
\begin{equation*}
u=\frac{t(x^2+y^2+1)}{t^2\tan\theta-t^2y-2 t x+y+\tan\theta},\qquad
v=\frac{(t^2-1)(x^2+y^2+1)}{2(t^2\tan\theta-t^2y-2 t x+y+\tan\theta)},
\end{equation*}
unless the denominators vanish. For the numerical stability of this approach, 
\eqref{eq-simplified-system} should not have multiple roots.



\subsection*{Reconstruction of conics}

We are able to prove the reciprocal assertion of Corollary~\ref{conj-characterization} under the minor restrictions that the common solution $(u,v)$ is \emph{not a multiple} root of  \eqref{eq-conj-characterization1} and \eqref{eq-conj-characterization2}, and continuously depends on the point $(x,y)$. We say that conic~\eqref{eq-conj-reciprocal} is \emph{multiple}, if $(u,v)$ is a common real multiple root of \eqref{eq-conj-characterization1} and \eqref{eq-conj-characterization2}.



\begin{theorem}\label{conj-reciprocal}
Let $f$ be a $C^4$ function in a disk $D\subset\mathbb{R}^2$.
Suppose that through each point $(x,y,z)$ of the graph of $f$,
there passes an arc of a nonmultiple conic $C_{x,y}$ having contact order~$4$  at $(x,y,z)$ with the graph, continuously depending on $(x,y)$, and such that the top view of $C_{x,y}$ is the stereographic projection of a circular arc of intrinsic radius $\frac{\pi}{2}-\theta$ (not passing through the projection center). Then an arc of the conic $C_{x,y}$ is contained in the graph.
\end{theorem}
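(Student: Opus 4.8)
The plan is to mirror the reconstruction arguments used for developable and ruled surfaces (Sections~\ref{sec-developable} and~\ref{sec-ruled}): extract from the algebraic data a smooth vector field of candidate top-view tangents, integrate it to get curves, and then prove these curves are genuinely the top views of the desired conics and that $f$ restricted to them is the prescribed linear function. Concretely, at each point $(x,y)$ the hypotheses give a nonmultiple common real root $(u(x,y),v(x,y))$ of~\eqref{eq-conj-characterization1} and~\eqref{eq-conj-characterization2}; because the root is simple and depends continuously on $(x,y)$, the implicit function theorem upgrades it to a $C^1$ (in fact $C^2$, since $f$ is $C^4$ and the equations involve only third derivatives) selection $(u,v)$ on a neighborhood of a generic point. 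Using Proposition~\ref{prop-conic} and~\eqref{eq-conj-reciprocal}, the top view of the candidate conic through $(x,y)$ is the circle of radius $\sqrt{u^2+v^2}$ centered at $(x+u,y+v)$; its tangent at $(x,y)$ has direction $(v,-u)$. So I would integrate the vector field $(\dot x,\dot y)=(v(x,y),-u(x,y))$, obtaining via Picard--Lindel\"of a regular curve $(x(t),y(t))$ through the given point.

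The core of the proof is then to show $(x(t),y(t))$ traces a Euclidean \emph{circle} of the correct radius, and that the full space curve $(x(t),y(t),f(x(t),y(t)))$ is an \emph{arc of the conic}~\eqref{eq-conj-reciprocal}. The strategy is the same differentiate-the-constraint trick used for ruled surfaces: along the integral curve the functions $a(t),b(t)$ should be defined by~\eqref{eq-prop-tangency-order2}, and the equations~\eqref{eq-conj-characterization2} and~\eqref{eq-conj-characterization3} hold identically. Differentiating~\eqref{eq-conj-characterization2} (a third-order identity) with respect to $t$ and substituting $\dot x=v$, $\dot y=-u$, the fourth-order terms that appear must be exactly cancelled by~\eqref{eq-conj-characterization3} plus the ``correction'' terms coming from $\dot u,\dot v$, leaving a linear relation between $(\dot u,\dot v)$ and $(u,v)$. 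The aim is to conclude $(\dot u,\dot v)$ is proportional to $(u,v)$ (so that the center $(x+u,y+v)$ moves along $(v,-u)+(\dot u,\dot v)$... — here one must be careful, since for a circle the center is \emph{fixed}, i.e.\ $(1+\frac{\partial}{\partial t})(x+u,y+v)$ must vanish, which forces $\dot u=-\dot x=-v$, $\dot v=-\dot y=u$). So the right target identity is $\dot u=-v,\ \dot v=u$, equivalently the center $(x+u,y+v)$ is constant and $u^2+v^2$ is constant; this is what pins the curve to be a circle of the prescribed radius, and condition~$(\Theta)$ via~\eqref{eq-conj-characterization1} then fixes the radius so that the inverse stereographic image has intrinsic radius $\frac{\pi}{2}-\theta$.

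Once the top view is known to be the correct circle, it remains to check that $(x(t),y(t),f(x(t),y(t)))$ coincides with~\eqref{eq-conj-reciprocal}, i.e.\ that $f$ along this circle equals $z+a\sin t+b(1-\cos t)$ with $a,b$ from~\eqref{eq-prop-tangency-order2}. Set $g(t)=f(x(t),y(t))-z(t)$ where $z(t)$ is the right-hand side of the third line of~\eqref{eq-conj-reciprocal}. We have $g(0)=0$ and, by Proposition~\ref{prop-tangency-order} applied at $t=0$, $g'(0)=g''(0)=g'''(0)=g''''(0)=0$. Then I would differentiate the identity ``conic has $4$th-order contact at $(x(t),y(t))$'' — which is~\eqref{eq-conj-characterization2} together with~\eqref{eq-conj-characterization3}, now holding at every $t$ along the curve — to show $g^{(5)}$, and inductively $g^{(k)}$, can be expressed through lower derivatives of $g$ and hence $g\equiv 0$; alternatively, observe that $g$ satisfies a linear ODE with $g$ and its first four derivatives vanishing at $t=0$, forcing $g\equiv 0$. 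Finally I would invoke the genericity/negligibility conventions from Section~\ref{sec-developable} to pass from a generic point to the statement as phrased, and deal with the technical caveats (staying in the region where the simple root persists, $u^2+v^2\neq 0$, denominators in the parametrization nonzero) by restricting to an appropriate open dense subset, exactly as done for ruled surfaces.

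\medskip
\noindent\textbf{Main obstacle.} The hard part is the bookkeeping in the differentiation step: verifying that differentiating the third-order equation~\eqref{eq-conj-characterization2} along the field, after substituting $\dot x=v,\dot y=-u$, produces precisely the fourth-order combination in~\eqref{eq-conj-characterization3} plus terms linear in $(\dot u,\dot v)$ whose coefficient matrix is nonsingular exactly when the conic is nonmultiple — this nondegeneracy is where the ``nonmultiple root'' hypothesis is consumed, and identifying the right invariant combination (center constant, i.e.\ $\dot u=-v,\dot v=u$) rather than merely $(\dot u,\dot v)\parallel(u,v)$ is the conceptual subtlety that distinguishes the circle case from the line case.
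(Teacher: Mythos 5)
Your overall strategy coincides with the paper's: obtain a $C^1$ selection $(u,v)$ via the implicit function theorem (nonmultiplicity being nonvanishing of the Jacobian of \eqref{eq-conj-characterization1} and \eqref{eq-conj-characterization2}), integrate $(\dot x,\dot y)=(v,-u)$, show $(\dot u,\dot v)=(-v,u)$ so that the top view is a fixed circle, then handle the $z$-coordinate. However, as written the key differentiation step does not close. Differentiating \eqref{eq-conj-characterization2} along the field and subtracting \eqref{eq-conj-characterization3} produces a \emph{single} scalar relation in the two unknowns $\dot u,\dot v$, which cannot by itself force the two equalities $\dot u=-v$, $\dot v=u$; the $2\times 2$ ``coefficient matrix'' you want to be nonsingular never materializes if only one constraint is differentiated. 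The missing ingredient is to differentiate \eqref{eq-conj-characterization1} along the integral curve as well: the paper does this first, writes the result as $(v+\dot u)\tilde u=(u-\dot v)\tilde v$ with $(\tilde u,\tilde v)$ half the $(u,v)$-gradient of \eqref{eq-conj-characterization1} as in \eqref{eq-conj-reciprocal-tildes}, deduces $\dot u=-v-g\tilde v$, $\dot v=u+g\tilde u$ for some function $g(t)$, and only then substitutes into the differentiated \eqref{eq-conj-characterization2} minus \eqref{eq-conj-characterization3} to get $gJ=0$, where $J$ is the Jacobian; nonmultiplicity gives $J\neq0$, hence $g\equiv0$ and $(\dot u,\dot v)=(-v,u)$. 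Without the differentiated \eqref{eq-conj-characterization1} the equation count does not match and the nonmultiplicity hypothesis cannot be consumed in the way you describe.

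The vertical part also needs repair. Your primary route --- expressing $g^{(5)}$ and inductively $g^{(k)}$ through lower derivatives --- is unavailable, since $f$ is only $C^4$ (so $g^{(5)}$ need not exist), and in any case such expressions do not yield $g\equiv0$ without a genuine ODE uniqueness argument; your alternative ``$g$ satisfies a linear ODE'' is the right idea, but the ODE must be identified. The paper's route: first one shows that every conic $C_{x(t),y(t)}$ has the \emph{same} top-view circle as $C_{x(0),y(0)}$ (this requires rerunning the circle argument at each point of the integral curve), and since the $z$-coordinate of any conic of form \eqref{eq-conj-reciprocal} over that circle satisfies $\frac{d^3z}{dt^3}+\frac{dz}{dt}=0$, third-order contact at \emph{every} $t$ gives $\frac{d^3}{dt^3}f(x(t),y(t))+\frac{d}{dt}f(x(t),y(t))=0$; hence $f(x(t),y(t))=z'+a'\sin t+b'(1-\cos t)$, and second-order contact at $t=0$ matches the constants (three vanishing derivatives suffice, not four). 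Two minor points: the detour through generic points is unnecessary here --- the hypotheses hold at every point, so the argument applies everywhere --- and the claim that $(u,v)$ is $C^2$ is unjustified, since the coefficients of \eqref{eq-conj-characterization1}--\eqref{eq-conj-characterization2} are only $C^1$; but $C^1$ is all that Picard--Lindel\"of requires.
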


\begin{remark} By Proposition~\ref{prop-tangency-order},
the assumptions of Theorem~\ref{conj-reciprocal} are equivalent to equations~\eqref{eq-conj-characterization1},\eqref{eq-conj-characterization2},\eqref{eq-conj-characterization3} having a common real solution $(u,v)$ \emph{nowhere} satisfying the equation (where the left-hand side is the Jacobian of
\eqref{eq-conj-characterization1} and \eqref{eq-conj-characterization2})
\begin{multline}\label{eq-conj-reciprocal-obstruction}
  f_{xxx}v^2\tilde u+f_{xxy}v(v\tilde v-2u\tilde u)+f_{xyy}u(u\tilde u-2v\tilde v)+f_{yyy}u^2\tilde v
  +(f_{xx}-f_{yy})(u\tilde u-v\tilde v)+2f_{xy}(u\tilde v+v\tilde u)=0,
\end{multline}
where
\begin{equation}\label{eq-conj-reciprocal-tildes}
\tilde u=x(x^2+y^2+1+2xu+2yv)- 4u\tan^2\theta,\qquad
\tilde v=y(x^2+y^2+1+2xu+2yv)- 4v\tan^2\theta.
\end{equation}
\end{remark}


The restriction that the conic continuously depends on the point seems inessential; it is imposed to bypass technical issues discussed in Sections~\ref{sec-developable}--\ref{sec-ruled}. But dropping the restriction that the conic is nonmultiple would require new ideas, just like developable surfaces require special treatment in characterization of ruled surfaces in~Section~\ref{sec-ruled}.

\begin{problem} Prove the reciprocal assertion in Corollary~\ref{conj-characterization} in the case when \eqref{eq-conj-reciprocal-obstruction} holds identically, i.e. the conic is multiple.
Is it true that in this case the surface $z=f(x,y)$ is the envelope of a one-parametric family of rotational paraboloids~\eqref{iso-sphere} such that each characteristic is 
a conic satisfying condition~$(\Theta)$?
\end{problem}

\begin{proof}[Proof of Theorem~\ref{conj-reciprocal}]

First let us show that the conic $C_{x,y}$ smoothly depends on $x$ and $y$, more precisely, that it is parametrized by~\eqref{eq-conj-reciprocal} with the coefficients being $C^1$ functions in $x$ and $y$. Indeed, by Proposition~\ref{prop-tangency-order} the conic $C_{x,y}$ is given by~\eqref{eq-conj-reciprocal} for some $(u,v,a,b,z)=(u(x,y),v(x,y),a(x,y),b(x,y),z(x,y))$
continuously depending on $(x,y)$ and satisfying four equations~\eqref{eq-conj-characterization1}--\eqref{eq-conj-characterization3}.
Since $C_{x,y}$ is nonmultiple, it follows that $(u,v)$ is not a multiple solution of the system of equations~\eqref{eq-conj-characterization1} and~\eqref{eq-conj-characterization2}. Hence by the implicit function theorem, it follows that $u(x,y)$ and $v(x,y)$ are $C^1$. By~\eqref{eq-prop-tangency-order2} the remaining coefficients are $C^1$ as well.

Notice that the vector $(v,-u)$ is tangent to the top view of conic~\eqref{eq-conj-reciprocal} at the point $t=0$. Integrate the resulting vector field:
by the Picard-Lindel\"of theorem for some $\epsilon>0$ there is a regular curve $(x(t),y(t))$ such that $\dot{x}(t)=v(x(t),y(t))$ and $\dot{y}(t)=-u(x(t),y(t))$ for each $t\in (-\epsilon,\epsilon)$. 

Let us prove that $(x(t),y(t))$ is a circular arc, and  moreover $(x(t),y(t),f(x(t),y(t)))$ is contained in the conic $C_{x(0),y(0)}$. Hereafter all equations are understood as holding for each $t\in (-\epsilon',\epsilon')$, where possibly $\epsilon'<\epsilon$, and all the functions $u=u(x(t),y(t)), v=v(x(t),y(t)), f=f(x(t),y(t))$ and their derivatives are evaluated at the point $(x(t),y(t))$.

Start with equation~\eqref{eq-conj-characterization1}.
Differentiating it with respect to $t$ and substituting $\dot{x}=v$, $\dot{y}=-u$, we get
  $$
  2(x(v+\dot{u})+y(\dot{v}-u))(x^2+y^2+1+2xu+2yv)- 8\tan^2\theta (u\dot{u}+v\dot{v})=0.
  $$
  This is equivalent to (recall notation~\eqref{eq-conj-reciprocal-tildes})
  $$
  (v+\dot{u})\underbrace{\left( x(x^2+y^2+1+2xu+2yv)- 4u\tan^2\theta \right)}_{\tilde u}=
  (u-\dot{v})\underbrace{\left( y(x^2+y^2+1+2xu+2yv)- 4v\tan^2\theta\right)}_{\tilde v}.
  $$
  Here $(\tilde u,\tilde v)$ is half of the gradient of the left-hand side of~\eqref{eq-conj-characterization1} considered as a function in $(u,v)$. Thus $(\tilde u,\tilde v)\ne(0,0)$ because otherwise $(u,v)$ would be a multiple solution of the system of equations~\eqref{eq-conj-characterization1} and~\eqref{eq-conj-characterization2}. This implies that there is a function $g=g(t)$ (e.g., $g=(\dot{v}-u)/\tilde{u}$ for $\tilde{u}\ne 0$) such that
  \begin{equation}\label{eq-tilde}
  \begin{cases}
    \dot{u} &= -v - g\tilde{v}, \\
    \dot{v} &=  u + g\tilde{u}.
  \end{cases}
  \end{equation}
  Here we have essentially relied on a particular form of the constraint~~\eqref{eq-conj-characterization1}.

  Let us now switch to~\eqref{eq-conj-characterization2}. Differentiating with respect to $t$, substituting~\eqref{eq-tilde} and subtracting~\eqref{eq-conj-characterization3}, we get
  \begin{multline*}
  f_{xxxx}v^4-4f_{xxxy}v^3u+6f_{xxyy}v^2u^2-4f_{xyyy}vu^3+f_{yyyy}u^4 
 +3 u v^2 f_{xxx}+3v(v^2-2u^2)f_{xxy}+3u(u^2-2v^2)f_{xyy}+3 u^2 v f_{yyy}\\
 +3 v^2 \dot{v} f_{xxx} -3v(2u\dot{v}+v\dot{u})f_{xxy}+3u(2v\dot{u}+u\dot{v})f_{xyy}-3 u^2 \dot{u} f_{yyy} 
 +3 (u\dot{v}+ v\dot{u})(f_{xx}-f_{yy})+6(v\dot{v}-u\dot{u})f_{xy} \\
= 3g\left(
f_{xxx}v^2\tilde u+f_{xxy}v(v\tilde v-2u\tilde u)+f_{xyy}u(u\tilde u-2v\tilde v)+f_{yyy}u^2\tilde v\right.
  +\left.(f_{xx}-f_{yy})(u\tilde u-v\tilde v)+2f_{xy}(u\tilde v+v\tilde u)
\right) 
=g J=0,
\end{multline*}
  where $J$ is the Jacobian of the system of equations~\eqref{eq-conj-characterization1} and~\eqref{eq-conj-characterization2} in $u$ and $v$.
  Here $J\ne 0$ because $(u,v)$ is not a multiple solution; thus $g(t)=0$ identically.

  (The expression $g J$ in the right-hand side is what one should actually expect: The left-hand side is obviously linear in $g$ and vanishes for $g=0$ because \eqref{eq-conj-characterization3} was obtained from~\eqref{eq-conj-characterization2} by differentiating  along a circle. The coefficient before $g$ is $J$ because $(\tilde u,\tilde v)$ is half of the gradient of the left-hand side of~\eqref{eq-conj-characterization1}.)

  Then by~\eqref{eq-tilde} we get $(\dot{u},\dot{v}) = (-v,u)$. Here $(-v,u)\ne 0$ by~\eqref{eq-conj-characterization1}. Thus the integral curve $(x(t),y(t))$ is a circular arc parametrized by the first two equations of~\eqref{eq-conj-reciprocal} with $x=x(0), y=y(0), u=u(x(0),y(0)), v=v(x(0),y(0))$.

  In particular, this means that the integral curve is contained in the top view of the conic $C_{x(0),y(0)}$. By a similar argument, the same curve is contained in the top view of each conic $C_{x(t),y(t)}$ (which a priori can be different from $C_{x(0),y(0)}$).

  Clearly, for each fixed $a,b,z$ and each $t$ the function $z(t)$ given by the third equation of~\eqref{eq-conj-reciprocal} satisfies
  $$
  \frac{d^3 z(t)}{dt^3}+\frac{d z(t)}{dt}=0.
  $$
  Since $C_{x(t),y(t)}$ has contact of order $3$ with the graph of $f$, it follows that for each $t$ sufficiently close to $0$ also
  $$
  \frac{d^3 f(x(t),y(t))}{dt^3}+\frac{d f(x(t),y(t))}{dt}=0.
  $$
  Thus $f(x(t),y(t))=z'+a'\sin t+b'(1-\cos t)$ for some $z',a',b'\in\mathbb{R}$. Since the graph of $f$  has contact  of order $2$ with $C_{x(0),y(0)}$, it must contain an arc of the conic $C_{x(0),y(0)}$ completely.
\end{proof}


\section{Surfaces enveloped by a family of cones: conclusion} \label{sec-cones}

Now we use the results of the previous two sections to complete the characterization of surfaces enveloped by a one-parametric family of congruent cones (Theorem~\ref{cor-cone-envelope} below). 
We show how to construct $\Phi^i$ from $\Phi$ and vice versa. Then we show how to reconstruct the positions of cones in the enveloping family. In our implementation, when constructing $\Phi^i$ locally at some point $\mathbf{r}$, we rotate $\Phi$ such that the normal of $\Phi$ at $\mathbf{r}$ coincides with $(0,0,1)$. Globally, when a whole surface is considered, the mean normal vector is computed and aligned with $(0,0,1)$. This preprocessing aims at minimizing the distortion of the mapping
Eq.~\eqref{eq-isotropic-model}.

\subsection*{Characterization of surfaces enveloped by a family of cones}

We summarize now the previous results. From Proposition~\ref{prop-translation}, Corollary~\ref{conj-characterization}, and Theorem~\ref{conj-reciprocal} together we get the following characterization.

\begin{theorem}[characterization of surfaces enveloped by a family of cones] \label{cor-cone-envelope}
  Assume (*).

  If through each point of $\Phi$ there passes a cone which is tangent to $\Phi$ along a curve (containing the point), has the opening angle $\theta$, and has no tangent planes orthogonal to $(0,0,-1)$, then for each $(x,y)\in D$ three equations~\eqref{eq-conj-characterization1},
  \eqref{eq-conj-characterization2},\eqref{eq-conj-characterization3} have a common nonzero real solution $(u,v)$.

  Conversely, if for each $(x,y)\in D$ three equations~\eqref{eq-conj-characterization1},
  \eqref{eq-conj-characterization2}, \eqref{eq-conj-characterization3} have a common real solution $(u,v)$ continuously depending on $(x,y)$ and nowhere satisfying~\eqref{eq-conj-reciprocal-obstruction}, then through each point of $\Phi$ there passes a cone which is tangent to $\Phi$ along a continuous curve (containing the point) and has the opening angle $\theta$.
 \end{theorem}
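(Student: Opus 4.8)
The plan is to assemble the theorem directly from the three results already proved, since each handles one link in the chain between the Euclidean envelope picture and the algebraic conditions on $f$. The bridge between the two worlds is the isotropic model: condition~(*) guarantees that $\Phi^i$ is the graph of a $C^4$ function $f\colon D\to\mathbb R$, and Proposition~\ref{prop-translation} translates ``a cone of opening angle $\theta$ is tangent to $\Phi$ along a curve through each point'' into ``an arc of a conic satisfying condition~$(\Theta)$ lies in $\Phi^i$ through each point''. Condition~$(\Theta)$ in turn is exactly the hypothesis of Proposition~\ref{prop-conic}, so such a conic is parametrized by~\eqref{eq-conj-reciprocal} with parameters constrained by~\eqref{eq-conj-characterization1}.

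\textbf{Forward direction.} First I would invoke Proposition~\ref{prop-translation}: the tangency hypothesis on $\Phi$ gives, through each point of $\Phi^i$, an arc of a conic satisfying $(\Theta)$ and contained in $\Phi^i$. Since a contained arc has contact of order $4$ (indeed infinite order) with the graph of $f$, Corollary~\ref{conj-characterization} applies verbatim and yields, for each $(x,y)\in D$, a common real solution $(u,v)$ of the three equations~\eqref{eq-conj-characterization1}, \eqref{eq-conj-characterization2}, \eqref{eq-conj-characterization3}. It remains only to check that this solution is \emph{nonzero}: if $(u,v)=(0,0)$ then~\eqref{eq-conj-characterization1} forces $(x^2+y^2+1)^2=0$, impossible over $\mathbb R$; so the solution is automatically nonzero, and the first half is complete.

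\textbf{Converse direction.} Here I would feed the algebraic hypothesis into Theorem~\ref{conj-reciprocal}. By the Remark following that theorem, a common real solution $(u,v)$ of the three equations, continuously depending on $(x,y)$ and nowhere satisfying the Jacobian condition~\eqref{eq-conj-reciprocal-obstruction}, is precisely the data needed: via Proposition~\ref{prop-tangency-order} it produces, through each point $(x,y,f(x,y))$ of the graph, a nonmultiple conic $C_{x,y}$ of contact order $4$ whose top view is the stereographic projection of a circular arc of intrinsic radius $\frac\pi2-\theta$, i.e.\ satisfying $(\Theta)$, and continuously depending on $(x,y)$. Theorem~\ref{conj-reciprocal} then guarantees that an arc of $C_{x,y}$ is actually contained in the graph of $f=\Phi^i$ through each point. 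Finally I apply Proposition~\ref{prop-translation} in the reverse direction: the existence of such contained conic arcs satisfying $(\Theta)$ through every point of $\Phi^i$ is equivalent to the existence, through every point of $\Phi$, of a cone of opening angle $\theta$ tangent to $\Phi$ along a continuous curve through that point. This is exactly the asserted conclusion.

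\textbf{Main obstacle.} Essentially all the analytic work has been done in the earlier sections, so the only real care needed is bookkeeping: one must verify that the hypotheses match across the three cited results without gaps — in particular that ``contained arc'' implies the ``contact order $4$'' hypothesis of Corollary~\ref{conj-characterization}, that the ``nonmultiple'' and continuity hypotheses of Theorem~\ref{conj-reciprocal} are faithfully encoded by the condition that~\eqref{eq-conj-reciprocal-obstruction} fails everywhere (this is the content of the Remark), and that condition~(*) legitimately lets us replace $\Phi$ by the graph of $f$. No new estimates or constructions are required; the subtlety is purely in confirming that the chain Proposition~\ref{prop-translation} $\Rightarrow$ Corollary~\ref{conj-characterization} (forward) and Theorem~\ref{conj-reciprocal} $\Rightarrow$ Proposition~\ref{prop-translation} (converse) closes up with compatible general-position assumptions.
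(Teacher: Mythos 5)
Your proposal is correct and follows essentially the same route as the paper, which obtains Theorem~\ref{cor-cone-envelope} precisely by combining Proposition~\ref{prop-translation}, Corollary~\ref{conj-characterization}, and Theorem~\ref{conj-reciprocal} (with the Remark after Theorem~\ref{conj-reciprocal} supplying the translation of the nonmultiplicity hypothesis via~\eqref{eq-conj-reciprocal-obstruction}). Your explicit check that any real solution of~\eqref{eq-conj-characterization1} is automatically nonzero is a minor detail the paper leaves implicit, but it does not change the argument.
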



We recall that among envelopes of cones, the ones with positive Gaussian curvature cannot be practically milled with a conical tool.

\begin{proposition}\label{prop-positive-curvature}
  If a surface is tangent to a cone along a curve and has positive Gaussian curvature, then it has common points with the interior of the cone (hence is not milable).
\end{proposition}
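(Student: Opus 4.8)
\textbf{Proof plan for Proposition~\ref{prop-positive-curvature}.}
The plan is to work locally at a contact point $p$ where the cone $C$ touches the surface $S$, and to exploit the mismatch between the cone's surface, which is developable (zero Gaussian curvature), and $S$, which has strictly positive Gaussian curvature. First I would set up convenient coordinates: translate $p$ to the origin, rotate so that the common tangent plane at $p$ is $z=0$, and write $S$ locally as the graph $z=f(x,y)$ with $f(0,0)=0$, $\nabla f(0,0)=0$. Since the Gaussian curvature of $S$ at $p$ is positive, the Hessian of $f$ at the origin is definite; after possibly flipping the $z$-axis we may assume it is positive definite, so $f(x,y)\ge c(x^2+y^2)$ for some $c>0$ in a neighborhood of the origin. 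Likewise write the cone $C$ locally near $p$ as a graph $z=g(x,y)$ with $g(0,0)=0$, $\nabla g(0,0)=0$, sharing the tangent plane $z=0$; the key point is that $C$ is developable, so its Hessian at the origin is only positive \emph{semi}definite --- it is singular, with a null direction along the ruling $r$ of $C$ through $p$. Concretely, after an additional rotation in the $(x,y)$-plane I would arrange that the ruling direction corresponds to the $x$-axis, so that $g(x,y)=\tfrac12\kappa y^2+o(x^2+y^2)$ with $\kappa>0$ (the normal curvature of the cone transverse to the ruling), and $g(x,0)\equiv 0$ along the ruling since the ruling is a straight line lying in the tangent plane $z=0$.

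Next I would compare the two graphs along the ruling. Along the line $y=0$ we have $g(x,0)=0$ exactly, while $f(x,0)\ge cx^2>0$ for $0<|x|$ small. The "interior of the cone" is, locally near $p$, the side of the surface $z=g(x,y)$ containing the axis; with the orientation chosen above this is the region $z<g(x,y)$ (the concave side of the cone). Thus it suffices to produce points of $S$ with $f(x,y)<g(x,y)$ arbitrarily close to $p$. Taking $y=0$ and $x\to 0$ this is immediate: $f(x,0)-g(x,0)=f(x,0)>0$, which is the \emph{wrong} sign, so I would instead have to be careful about which side is the interior. The correct reading: the interior of the cone lies on the side of $z=0$ toward the axis; a transverse section of $C$ by a plane $x=\text{const}\ne 0$ is a pair of rulings (two lines) and the interior is the wedge between them, i.e. the set of points whose distance to the axis is less than that of the cone's surface. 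In the graph picture this wedge is precisely $\{z \text{ between } 0 \text{ and } g(x,y)\}$ appropriately interpreted; along $x=\text{const}$ the interior is $|z|<g(x,y)$-type inequality relative to the two branches. So I would instead fix a small $x_0\ne 0$ and move slightly in $y$: there $g(x_0,y)=\tfrac12\kappa y^2+O(x_0^2+y^2)$ is a genuinely positive quadratic in $y$ opening upward, matching the two rulings emanating at $x=x_0$, whereas $f(x_0,y)$ is also a positive-definite quadratic but \emph{shifted and scaled} differently. The cleanest route is to compare \emph{second-order jets transverse to the ruling}: the cone's normal curvature in the direction conjugate to $r$ is the curvature $\kappa$ of the cross-section, while $S$ has, at $p$, positive normal curvature in \emph{every} direction, in particular in the direction of $r$ itself, where the cone's normal curvature is $0$. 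Hence along the ruling direction $S$ bulges away from the tangent plane to second order while $C$ stays in it, which places the nearby points of $S$ strictly on the far side of $C$'s tangent plane but, more to the point, strictly inside the solid cone: the solid cone contains a whole neighborhood of the ruling segment in the tangent plane $z=0$ (a flat slab), and $S$ lies just above that slab, hence inside the cone.

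To make that precise I would argue as follows. The solid cone (the convex region bounded by $C$, locally) contains the open segment of the ruling $r$ through $p$; since the solid region is (locally) the set $z\le g(x,y)$ near points with $z\ge 0$ is not quite right --- rather I would use that a neighborhood $U$ of the ruling segment inside the tangent plane $z=0$ lies in the \emph{closure} of the solid cone, and that the solid cone, being the region swept by the disk bounded by the directrix circle, actually contains points slightly off $z=0$ above $U$ as well: for each point $q\in U$ interior to the ruling segment, a small vertical interval $\{q+(0,0,s): 0<s<\delta(q)\}$ lies in the interior of the solid cone, because $q$ is an interior point of the base disk at its own cross-sectional height. Then, since $f(x,y)=O(x^2+y^2)$ near $p$, for $(x,y)$ close enough to an interior ruling point $(x_0,0)$ the surface point $(x,y,f(x,y))$ has $z$-coordinate in the interval $(0,\delta)$, hence lies in the interior of the solid cone. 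This gives the claim. The main obstacle I anticipate is purely bookkeeping: pinning down \emph{which} side of the developable cone is its interior and writing the local inequality for "point lies inside the solid cone" in the graph coordinates, including the degenerate possibility that the tangent plane at $p$ contains the ruling but the contact curve is tangent to the ruling (excluded here since $K>0$ forbids the contact curve from being a ruling, as already noted in the text). Once the orientation is fixed, the estimate $f\le C(x^2+y^2)$ versus the cone containing a two-sided neighborhood of the ruling in $z=0$ closes the argument with no further computation.
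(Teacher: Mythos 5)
Your overall strategy is the same as the paper's: in graph coordinates at the contact point $p$, the surface rises quadratically off its tangent plane in every direction (positive definite Hessian), while the cone contains the ruling through $p$ inside that tangent plane, and comparing the two along the ruling direction should produce surface points inside the solid cone. However, the step you use to close the argument is false. The tangent plane $z=0$ meets the closed solid cone only along the ruling: above a point $q=(x,y,0)$ with $y\neq 0$ the solid cone begins only at height $g(x,y)>0$. So it is not true that ``a neighborhood $U$ of the ruling segment inside the tangent plane $z=0$ lies in the closure of the solid cone,'' nor that a vertical interval $\{q+(0,0,s):0<s<\delta(q)\}$ lies in its interior for such $q$ (the points with $0<s<g(x,y)$ lie outside); the same ``flat slab'' claim already appears in your second paragraph. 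Consequently the final inference ``the surface point $(x,y,f(x,y))$ has $z$-coordinate in $(0,\delta)$, hence lies in the interior of the solid cone'' is invalid for $y\neq 0$: being inside the cone above $(x,y,0)$ means $f(x,y)>g(x,y)$, not merely $f(x,y)>0$, and nothing in your argument controls the sign of $f-g$ off the ruling (along the contact curve $f-g$ even vanishes to first order, so this is genuinely delicate). A smaller omission: the positivity of your transverse curvature $\kappa$, i.e.\ that the cone opens toward $z\geq 0$ rather than $z\leq 0$, needs the observation that the cone lies on one side of its tangent plane and shares with $S$ the contact-curve points, which have $z>0$.

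The repair is exactly the restriction you almost make, and it is the paper's proof: stay in the plane $y=0$ spanned by the ruling and the surface normal at $p$ (the normal section through the ruling direction). There $g(x,0)\equiv 0$, while $f(x,0)\geq c\,x^{2}>0$ for small $x\neq 0$; and a point $(x,0,s)$ with $s>0$ small is interior to the convex solid cone, because at a non-vertex point of the ruling the tangent plane is the unique supporting plane of the solid cone and the direction $(0,0,1)$ points strictly into the corresponding half-space (this can also be checked by a one-line computation in coordinates adapted to the vertex and axis). Hence the normal-section points $\bigl(x,0,f(x,0)\bigr)$ with $x\neq 0$ small lie in the interior of the cone, which is all the proposition asks; no claim about surface points with $y\neq 0$ is needed.
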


\begin{proof}
  Let $\mathbf{r}$ be a point on the surface $\Phi$. Since the Gaussian curvature is positive, it follows that $\Phi$ locally is contained in one half-space with respect to the tangent plane $T_r\Phi$. Let $C$ be the cone tangent to $\Phi$ along a curve passing through $\mathbf{r}$. Thus $C$ must be locally contained in the same half-space with respect to the tangent plane $T_r\Phi$. Consider the normal section to $\Phi$ at $\mathbf{r}$ passing through the ruling of $C$ through $\mathbf{r}$. The points of the section belong to the interior of $C$.
  \end{proof}

\subsection*{Construction of the surface in the isotropic model}

To apply Theorem~\ref{cor-cone-envelope} in practice, one needs to construct surface $\Phi^i$ from $\Phi$ and vice versa. These constructions are given by the following proposition.

\begin{proposition}\label{cor0}\label{cor1} \textup{ (Cf.~\cite[Corollary~2]{pottmann-2009-lms})}
Assume (*). Let $(n_1,n_2,n_3)$ be the oriented unit normal at a point $(r_1,r_2,r_3)$ of $\Phi$. Then the function $f$ and its derivatives are given by
\begin{align}
  f  \left(\frac{n_1}{n_3+1},\frac{n_2}{n_3+1}\right) &=
  -\frac{n_1 r_1+n_2 r_2+n_3 r_3}{n_3+1},\label{eq-f}\\
  f_x\left(\frac{n_1}{n_3+1},\frac{n_2}{n_3+1}\right) &=
  \frac{n_1 r_3}{n_3+1} -r_1,\label{eq-fx} \\
  f_y\left(\frac{n_1}{n_3+1},\frac{n_2}{n_3+1}\right) &=
  \frac{n_2 r_3}{n_3+1} -r_2.\label{eq-fy}
\end{align}
Conversely, given the function $f$, the surface $\Phi$ can be parametrized as follows:
\begin{equation}\label{eq-cor1}
\mathbf{r}(x,y)=
\frac{1}{x^2+y^2+1}\left(
\text{\begin{tabular}{c}
$(x{}^2-y{}^2-1)f_x+2x y f_y-2x f$\\
$(y{}^2-x{}^2-1)f_y+2x y f_x-2y f$\\
$2x f_x+2y f_y-2 f$
\end{tabular}}
\right).
\end{equation}
Here the point $\mathbf{r}(x,y)$ is the tangency point of $\Phi$ and the plane $P$ such that $P^i=(x,y,f(x,y))$.
\end{proposition}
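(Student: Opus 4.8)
\textbf{Proof proposal for Proposition~\ref{cor1}.}

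The plan is to unwind the definitions on both sides and check that the two parametrizations are mutually inverse, using the isotropic coordinates~\eqref{eq-isotropic-model} as the bridge. First I would establish the forward direction~\eqref{eq-f}--\eqref{eq-fy}. Fix a point $(r_1,r_2,r_3)$ of $\Phi$ with oriented unit normal $(n_1,n_2,n_3)$. The oriented tangent plane $P$ at that point has equation $n_1(X-r_1)+n_2(Y-r_2)+n_3(Z-r_3)=0$, i.e.\ $h=-(n_1r_1+n_2r_2+n_3r_3)$, so by~\eqref{eq-isotropic-model} the point $P^i$ has coordinates $\bigl(\tfrac{n_1}{n_3+1},\tfrac{n_2}{n_3+1},\tfrac{h}{n_3+1}\bigr)$. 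Since $\Phi^i$ is by definition the graph of $f$ and $P^i\in\Phi^i$, the third coordinate equals $f$ evaluated at the first two, which is exactly~\eqref{eq-f}. For~\eqref{eq-fx}--\eqref{eq-fy}, the key fact is that the partial derivatives of $f$ at $P^i$ encode the contact point $\mathbf{r}$: this is the standard duality in the isotropic model (Laguerre geometry), where the point of tangency is recovered from the function value and its gradient. Concretely, I would differentiate the incidence relation ``$P^i=(x,y,f(x,y))$ is the image of the tangent plane at $\mathbf{r}(x,y)$'' — or, more elementarily, write $x=\tfrac{n_1}{n_3+1}$, $y=\tfrac{n_2}{n_3+1}$, note that a point lies on plane $P$ iff the corresponding paraboloid~\eqref{iso-sphere} passes through $P^i$, and match coefficients. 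Solving the resulting linear relations for $f_x, f_y$ in terms of $n_i, r_i$ yields~\eqref{eq-fx}--\eqref{eq-fy}; this is a short computation once one recalls that a point $\mathbf{m}=(r_1,r_2,r_3)$ (radius-zero sphere) maps to the plane (degenerate paraboloid) $z = r_3(x^2+y^2)/\,\cdots$, wait — more simply, the set of planes through $\mathbf{r}$ corresponds to an isotropic-linear condition, and $\Phi^i$ being tangent to that linear object at $P^i$ gives the gradient.

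Next I would verify the converse parametrization~\eqref{eq-cor1}. Given $f$, set $x,y$ free, put $(n_1,n_2,n_3) = \tfrac{1}{x^2+y^2+1}(2x,2y,1-x^2-y^2)$, which is the inverse stereographic projection from $(0,0,-1)$ and is therefore a unit vector with $n_3+1 = \tfrac{2}{x^2+y^2+1}$, so that $\tfrac{n_1}{n_3+1}=x$ and $\tfrac{n_2}{n_3+1}=y$ as required. The plane $P$ with $P^i=(x,y,f(x,y))$ then has $h = (n_3+1)f(x,y) = \tfrac{2f}{x^2+y^2+1}$ and unit normal $(n_1,n_2,n_3)$. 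The contact point $\mathbf{r}(x,y)$ is characterized by: (i) $\mathbf{r}$ lies on $P$, and (ii) moving $(x,y)$ infinitesimally, the family of planes $P(x,y)$ envelopes $\Phi$, so $\mathbf{r}$ also lies on the two derived planes $P_x, P_y$ (the plane-envelope condition, exactly as in Eq.~\eqref{dual-to-point}). Writing out these three linear equations in the unknown $\mathbf{r}=(r_1,r_2,r_3)$ — one from $P$ and two obtained by differentiating the plane equation $n_1(x,y)r_1+n_2(x,y)r_2+n_3(x,y)r_3+h(x,y)=0$ with respect to $x$ and $y$ — and solving the $3\times 3$ system gives~\eqref{eq-cor1}. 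Alternatively, and perhaps more cleanly, I would simply \emph{verify} that the $\mathbf{r}(x,y)$ defined by~\eqref{eq-cor1} satisfies~\eqref{eq-f}--\eqref{eq-fy} with the above $(n_i)$: substituting the displayed components into the right-hand sides of~\eqref{eq-f}--\eqref{eq-fy} and simplifying should reproduce $f, f_x, f_y$. Since the forward map is already shown to be well-defined and (under Condition~(*)) $\Phi^i$ is a graph, this verification plus a dimension/injectivity remark shows the two constructions are inverse to each other, and in particular $\mathbf{r}(x,y)$ is the stated tangency point.

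The main obstacle is purely bookkeeping: correctly setting up the $3\times 3$ linear system for $\mathbf{r}$ from the plane-and-its-two-derivatives conditions, and then carrying out the elimination to land precisely on the compact form~\eqref{eq-cor1} with the common denominator $x^2+y^2+1$. One has to be careful that the derivatives $P_x, P_y$ are taken of the plane \emph{equation} (coefficients depending on $x,y$ through both the unit normal and through $h=(n_3+1)f$), not of anything else, and that the resulting system is nonsingular — which is where Condition~(*) (nonvanishing Gaussian curvature, normals avoiding $(0,0,-1)$) enters, guaranteeing the intersection~\eqref{dual-to-point} is a well-defined finite point. Everything else is routine: the forward relations~\eqref{eq-f}--\eqref{eq-fy} follow immediately from the definition of $\Phi^i$ and of the isotropic coordinates, and the converse is either a direct solve or a direct substitution check. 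I would present the forward direction in two lines, then do the converse by the substitution-verification route to keep the algebra to a minimum, citing \cite[Corollary~2]{pottmann-2009-lms} for the parallel statement.
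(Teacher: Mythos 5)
Your proposal is correct and follows essentially the paper's own route: \eqref{eq-f} is read off from the definition of $P^i$, \eqref{eq-fx}--\eqref{eq-fy} are obtained by differentiating that incidence relation with respect to $x$ and $y$ and using $\mathbf{n}\cdot\partial_x\mathbf{r}=\mathbf{n}\cdot\partial_y\mathbf{r}=0$, and \eqref{eq-cor1} then comes from solving \eqref{eq-f}--\eqref{eq-fy} as a linear system in $r_1,r_2,r_3$ (whose determinant is $\tfrac12(x^2+y^2+1)\neq0$, so the nonsingularity needs no appeal to Condition~(*) or to the genericity of \eqref{dual-to-point}). One caution on your alternative ``duality'' remark: the planes through $\mathbf{r}$ correspond in the isotropic model not to a linear object but to the paraboloid $z=\tfrac{r_3}{2}(x^2+y^2)-r_1x-r_2y-\tfrac{r_3}{2}$ (the $R=0$ case of \eqref{iso-sphere}), and the tangency of this paraboloid with the graph of $f$ at $P^i$ is precisely the content of \eqref{eq-fx}--\eqref{eq-fy}, so invoking it as a known fact would be circular unless you separately justify that the isotropic map preserves oriented tangency; the differentiation argument you state first avoids this and is the one to carry out.
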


In what follows we use the formula for the inverse stereographic projection from $(0,0,-1)$:
\begin{equation}\label{eq-inverse-stereographic}
\mathbf{n}(x,y)=
\frac{\left(2x, 2y, 1-x^2-y^2 \right)}{x^2+y^2+1}.
\end{equation}

\begin{proof}
The oriented tangent plane $P$ to $\Phi$ at the point $(r_1,r_2,r_3)$ is given by $n_1x+n_2y+n_3z-n_1r_1-n_2r_2-n_3r_3=0$. By the definition of $P^i$ we get~\eqref{eq-f}. Now let $(x,y)=\left(\tfrac{n_1}{n_3+1},\tfrac{n_2}{n_3+1}\right)$ be the stereographic projection of $(n_1,n_2,n_3)$. Then $\tfrac{n_3}{n_3+1}=\tfrac{1}{2}(1-x^2-y^2)$ by~\eqref{eq-inverse-stereographic}.
Substituting these expressions into~\eqref{eq-f}, differentiating with respect to $x$,
and using the condition $n_1\tfrac{\partial}{\partial x}r_1+n_2\tfrac{\partial}{\partial x}r_2+n_3\tfrac{\partial}{\partial x}r_3=0$ that $(n_1,n_2,n_3)$ is normal to~$\Phi$, we get~\eqref{eq-fx}.
Analogously we get~\eqref{eq-fy}. Solving ~\eqref{eq-f}--\eqref{eq-fy} as a linear system in $r_1,r_2,r_3$ we get~\eqref{eq-cor1}.
\end{proof}

\subsection*{Reconstruction of the cones}

To determine the position of a cone $C$ with a given opening angle and tangent to a given surface at a given point, it suffices to identify the position of the vertex and the side of the tangent plane which the cone borders upon at the tangency point (i.e. the halfspace containing a small neighborhood of the tangency point on the cone).

The vertex is reconstructed from the conic $C^i$ as follows.

\begin{proposition}\label{prop-vertex} Let $C$ be the cone such that the conic $C^i$ is parametrized by~\eqref{eq-conj-reciprocal}; then the vertex of $C$ is
\begin{multline}\label{eq-prop-vertex}
\mathbf{m}(x,y)=
\frac{1}{(u^2+v^2)(x^2+y^2+1+2 u x+2 v y)}\\
\times\left(
\text{\begin{tabular}{c}
$(x^2 - y^2 - 1) (a v + b u )  +
 2 x y (b v - a u)  -
 2 (u^2 + v^2) (u z + x z + a y)$\\
$( y^2 - x^2 - 1 ) (b v - a u) +
 2 x y (a v + b u )  -
 2 (u^2 + v^2) (v z + y z - a x)$\\
$2 x (a v + b u ) +
2 y (b v - a u) -
2 (u^2 + v^2) z$
\end{tabular}}
\right).
\end{multline}
\end{proposition}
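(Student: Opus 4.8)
The plan is to interpret the vertex of $C$ as a sphere of radius zero in the isotropic model. Recall from~\eqref{iso-sphere} that an oriented sphere is mapped to a paraboloid; specializing to radius $R=0$, the point $\mathbf m=(m_1,m_2,m_3)$ --- equivalently, the bundle of all planes through $\mathbf m$ --- corresponds to the paraboloid $z=\tfrac{m_3}{2}(x^2+y^2-1)-m_1 x-m_2 y$. (This is immediate from the definitions: a plane $n_1 x+n_2 y+n_3 z+h=0$ passes through $\mathbf m$ iff $h=-(n_1 m_1+n_2 m_2+n_3 m_3)$, and substituting $\tfrac{n_3}{n_3+1}=\tfrac12(1-x^2-y^2)$ from~\eqref{eq-inverse-stereographic} into the last coordinate $\tfrac{h}{n_3+1}$ of $P^i$ in~\eqref{eq-isotropic-model} yields the displayed equation.) Since every tangent plane of a cone of revolution contains the whole ruling through its contact point, it contains the vertex $\mathbf m$; hence $C^i$ lies on this paraboloid.

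Substituting the parametrization~\eqref{eq-conj-reciprocal} of $C^i$, we therefore get the identity $z(t)=\tfrac{m_3}{2}(x(t)^2+y(t)^2-1)-m_1 x(t)-m_2 y(t)$, valid for all $t$. I would differentiate it once and twice and evaluate at $t=0$. Reading off $(\dot x,\dot y,\dot z)|_{0}=(v,-u,a)$ and $(\ddot x,\ddot y,\ddot z)|_{0}=(u,v,b)$ from~\eqref{eq-conj-reciprocal}, this produces the linear system in $(m_1,m_2,m_3)$
\begin{align*}
z&=\tfrac{m_3}{2}(x^2+y^2-1)-m_1 x-m_2 y,\\
a&=m_3(xv-yu)-m_1 v+m_2 u,\\
b&=m_3(u^2+v^2+xu+yv)-m_1 u-m_2 v.
\end{align*}
Equivalently, this says that $\mathbf m$ lies on the plane $P(t)$ with $P(t)^i=(x(t),y(t),z(t))$ and on its first two $t$-derivatives, i.e. $\mathbf m$ is the common point of all tangent planes of the cone.

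It remains to solve this $3\times3$ system by Cramer's rule. A short computation gives its determinant as $-\tfrac12(u^2+v^2)(x^2+y^2+1+2ux+2vy)$, which is nonzero: the first factor because the top view of $C^i$ is a genuine circle, the second by~\eqref{eq-conj-characterization1} together with $\theta\ne0$. Hence $\mathbf m$ is the unique solution, and carrying out the elimination and simplifying yields exactly~\eqref{eq-prop-vertex}. The only genuinely geometric input is the observation that $C^i$ lies on the zero-radius paraboloid of its vertex; after that everything is linear algebra. I expect the main obstacle to be purely computational --- the explicit $3\times3$ elimination and simplification to match~\eqref{eq-prop-vertex} --- together with the one subtle point of verifying the nonvanishing of the coefficient determinant, where~\eqref{eq-conj-characterization1} is essential.
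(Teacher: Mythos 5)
Your proof is correct and follows essentially the same route as the paper's: the conic $C^i$ lies on the zero-radius paraboloid~\eqref{iso-sphere} of the vertex, and differentiating this identity twice at $t=0$ with $(\dot x,\dot y)|_0=(v,-u)$, $(\ddot x,\ddot y)|_0=(u,v)$ gives a $3\times 3$ linear system in $(m_1,m_2,m_3)$ whose solution is~\eqref{eq-prop-vertex}. Your explicit check that the determinant equals $-\tfrac12(u^2+v^2)(x^2+y^2+1+2ux+2vy)\ne 0$ via~\eqref{eq-conj-characterization1} is a small welcome addition that the paper leaves implicit.
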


\begin{proof}
  Let $(m_1,m_2,m_3)$ be the vertex of $C$. Then  conic~\eqref{eq-conj-reciprocal} must be contained in the surface~\eqref{iso-sphere} with $R=0$. Consecutively differentiating the left-hand side of~\eqref{iso-sphere} two times with respect to $t$, substituting $\dot{x}(0)=v$, $\dot{y}(0)=-u$, $\ddot{x}(0)=u$, $\ddot{y}(0)=v$, $R=0$, and solving the resulting system of $3$ linear equations in $m_1,m_2,m_3$, we get~\eqref{eq-prop-vertex}.
\end{proof}

The side which the cone $C$ borders the tangent plane $P$ upon at the point $\mathbf{r}$ can be identified as follows.

\begin{proposition}
Let a plane $P$ and a cone $C$ be such that $P^i=(x,y,z)$ and the conic $C^i$ is parametrized by~\eqref{eq-conj-reciprocal}. In particular, the vector $\mathbf{n}(x,y)$ given by~\eqref{eq-inverse-stereographic} is normal to $P$. Let $\mathbf{m}$ be the vertex of $C$ and $\mathbf{r}$ be a tangency point of $C$ and $P$. Take any point $(x',y')$ in the top view of $C^i$ distinct from $(x,y)$; e.g., $(x',y') =(x+2u, y+2v)$. Then $C$ borders upon $P$ at $\mathbf{r}$ from the side of the halfspace containing
$$
\begin{cases}
\mathbf{n}(x,y),
&\text{if }\mathbf{n}(x',y')\cdot (\mathbf{r}-\mathbf{m}) > 0;\\
-\mathbf{n}(x,y),
&\text{if }\mathbf{n}(x',y')\cdot (\mathbf{r}-\mathbf{m}) < 0.
\end{cases}
$$
\end{proposition}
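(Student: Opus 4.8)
The plan is to unwind the definitions so that the asserted sign condition becomes a statement about which of the two halfspaces bounded by $P$ contains a short arc of the cone $C$ near its tangency point $\mathbf{r}$. First I would recall that $C^i$ is the isotropic image of $C$, meaning it is the set of points $Q^i$ where $Q$ ranges over the oriented tangent planes of $C$; in particular, since $C^i$ is parametrized by~\eqref{eq-conj-reciprocal} with $(x(0),y(0),z(0))=(x,y,z)=P^i$, the plane $P$ is the tangent plane of $C$ along the ruling through $\mathbf{r}$, and the nearby tangent planes of $C$ are the $Q(s)$ with $Q(s)^i=(x(s),y(s),z(s))$. The top view of $C^i$ is the stereographic projection (from $(0,0,-1)$) of the Gaussian image of $C$, so a point $(x',y')$ in the top view distinct from $(x,y)$ corresponds, via~\eqref{eq-inverse-stereographic}, to a unit normal $\mathbf{n}(x',y')$ of a tangent plane of $C$ \emph{different} from $P$. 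The key geometric fact I would use: a tangent plane $Q$ of $C$ other than $P$ cuts the cone's ruling in direction of which one sees, from the vertex $\mathbf{m}$, the local arc of the cone lying on the side of $Q$ toward which $C$ bends; equivalently, the sign of $\mathbf{n}(x',y')\cdot(\mathbf{r}-\mathbf{m})$ tells us on which side of the plane $Q(x',y')$ the vertex $\mathbf{m}$ lies, and since near $\mathbf{r}$ the cone is on the vertex's side of $P$, comparing the two tangent planes fixes the bending direction.

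More concretely, the steps would be: (i) Identify $P$ as the tangent plane of $C$ through $\mathbf{r}$ and note $\mathbf{n}(x,y)$ is its oriented unit normal, by~\eqref{eq-inverse-stereographic} and the fact that $(x,y,z)=P^i$. (ii) Observe that the side of $P$ onto which $C$ borders at $\mathbf{r}$ is determined by the second-order behavior of the arc $t\mapsto\mathbf{r}(t)$ on $C$ through $\mathbf{r}$; this curve lies strictly on one side of $P$, and since $\mathbf{r}(t)$ is a point of the ruling family emanating from $\mathbf{m}$, that side is the same as the side of $P$ containing the small cone neighborhood, which I claim is detected by where the ``next'' tangent plane $Q'$ of $C$ places the vertex $\mathbf{m}$. (iii) Make this precise: the intersection line $P\cap Q'$ is a ruling of $C$ (namely the ruling along which $Q'$ touches $C$ is distinct, but the two tangent planes still share the vertex), so $\mathbf{m}\in P\cap Q'$ — wait, that is degenerate, so instead one should use that $Q'$ separates $\mathbf{r}$ from the far part of the cone: the cone locally lies on the side of $Q'$ away from which it curves, and since it touches $P$ along a whole ruling through $\mathbf{r}$, the pair of planes $P,Q'$ pins down the concave side. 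The clean formulation: $\mathbf{n}(x',y')\cdot(\mathbf{r}-\mathbf{m})$ has the sign of the signed distance from $\mathbf{r}$ to $Q'$ measured with $Q'$'s orientation, and because $\mathbf{r}$ lies on $C$ and $Q'$ is tangent to $C$, this signed distance is nonzero and its sign equals the side of $Q'$ on which the cone's interior (hence the ruling-neighborhood) lies; transporting this through the shared vertex to $P$ gives that the cone borders $P$ from the side of $\mathbf{n}(x,y)$ exactly when that sign is positive.

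The main obstacle I anticipate is getting the orientation bookkeeping exactly right: there are several sign conventions in play — the orientation of $P$ (inward vs outward normal, tied to the choice $(0,0,-1)$ being excluded and the formula~\eqref{eq-inverse-stereographic}), the orientation induced on $Q(x',y')$ by its position $(x',y',z')$ on $C^i$, and the convention for which halfspace ``the cone borders upon'' means (the one containing a small neighborhood of $\mathbf{r}$ on $C$). I would handle this by first verifying the claim on an explicit normalized example — say $C$ the standard cone $x^2+y^2=\tan^2\theta\,(z-z_0)^2$ with vertex at $(0,0,z_0)$, tangency point $\mathbf{r}$ on a ruling, $P$ its tangent plane there, and a concrete second tangent plane $Q'$ — checking both the left and right sides of the asserted dichotomy, and then arguing that the general case follows by a rigid motion, under which all the quantities $\mathbf{n}(x,y)$, $\mathbf{n}(x',y')$, $\mathbf{r}$, $\mathbf{m}$ transform equivariantly (note the isotropic model and the excluded normal direction are not rotation-invariant, but the \emph{statement} only involves Euclidean data $\mathbf{n},\mathbf{r},\mathbf{m}$ reconstructed via Propositions~\ref{cor1} and~\ref{prop-vertex}, so one may reduce to a convenient position). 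The computation in the normalized case is short: one evaluates $\mathbf{n}(x+2u,y+2v)$ from~\eqref{eq-inverse-stereographic}, $\mathbf{m}$ from~\eqref{eq-prop-vertex}, $\mathbf{r}$ from~\eqref{eq-cor1}, and checks the sign of the dot product against the known geometry of the standard cone — which is the routine part I would not grind through here.
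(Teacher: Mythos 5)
Your operative strategy is sound and, once the deferred computation is done, gives a correct proof by a genuinely different route from the paper. You reduce the claim to a purely Euclidean, rigid-motion-invariant statement about an oriented cone: vertex $\mathbf{m}$, tangency point $\mathbf{r}$, the oriented tangent plane $P$ with unit normal $\mathbf{n}(x,y)$, and a second oriented tangent plane $Q'$ of $C$ whose unit normal is $\mathbf{n}(x',y')$ (this translation is legitimate precisely because the top view of $C^i$ is the stereographic projection of the Gauss image of $C$, so inverse projection of any $(x',y')$ on it returns a consistently oriented normal of $C$). The ``routine part'' you skip really is short: for the normalized cone with rulings $t(\sin\theta\cos\phi,\sin\theta\sin\phi,\cos\theta)$ and normals $\pm(\cos\theta\cos\phi,\cos\theta\sin\phi,-\sin\theta)$, one finds $\mathbf{n}(x',y')\cdot(\mathbf{r}-\mathbf{m})=\pm s\sin\theta\cos\theta(\cos\phi'-1)$ for $\mathbf{r}=s(\sin\theta,0,\cos\theta)$, and the sign matches the asserted dichotomy for either nappe ($s\gtrless0$) and either orientation, and for \emph{every} $(x',y')\ne(x,y)$ at once. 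The paper argues differently: for the special antipodal choice $(x',y')=(x+2u,y+2v)$ it cuts $C$ by the meridian plane through $\mathbf{m}$ and $\mathbf{r}$ parallel to $\mathbf{n}(x,y)$ (which is also parallel to $\mathbf{n}(x',y')$), reducing to an obvious planar picture, and then extends to general $(x',y')$ by continuity of the sign. Your approach trades the paper's continuity step and computation-free planar reduction for a uniform verification plus a normalization argument; both are elementary. One caution for a write-up: several assertions in your intermediate narrative are false and should be deleted, since every tangent plane of a cone passes through the vertex --- so ``on which side of $Q(x',y')$ the vertex $\mathbf{m}$ lies'' and ``the cone is on the vertex's side of $P$'' are vacuous, and $P\cap Q'$ is a line through $\mathbf{m}$ lying outside the cone, not a ruling. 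The correct observation, which your step (iii) does reach, is that $\mathbf{m}\in Q'$, so $\mathbf{n}(x',y')\cdot(\mathbf{r}-\mathbf{m})$ is the signed distance from $\mathbf{r}$ to $Q'$ and is nonzero for $(x',y')\ne(x,y)$; the normalized computation then pins down which sign corresponds to which side of $P$.
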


\begin{proof}
For	$(x',y') =(x+2u, y+2v)$ cut the cone $C$ by the plane passing through $\mathbf{m}$ and $\mathbf{r}$ and being parallel to $\mathbf{n}(x,y)$. Then the plane is parallel to the vector $\mathbf{n}(x',y')$ as well, and the proposition reduces to an obvious planar problem. For other	$(x',y')\ne (x,y)$ the proposition follows by the continuity.
\end{proof}


\begin{remark}
  The distance between the vertex of the cone mapped to conic~\eqref{eq-conj-reciprocal} and the tangency point with the surface $\Phi$ mapped to $z=f(x,y)$ equals $|\mathbf{m}(x,y)-\mathbf{r}(x,y)|$,
  where $\mathbf{r}(x,y)$ and $\mathbf{m}(x,y)$ are given by \eqref{eq-cor1} and \eqref{eq-prop-vertex}.  Adding the inequality
  $$
  r\le |\mathbf{m}(x,y)-\mathbf{r}(x,y)|\le R
  $$
  to equations~\eqref{eq-conj-characterization1},\eqref{eq-conj-characterization2},\eqref{eq-conj-characterization3} we get a necessary condition of the surface $\Phi$ to be the envelope of one-parameter family of cones with the opening angle $\theta$, \emph{truncated} at distances $r$ and $R$ from the vertex.
\end{remark}

\section{Envelopes of congruent rotational cylinders} \label{sec:cylinder}

Cylinders are a limit case of cones, but this limit is not straightforward. This is so, since the limit of
cones with a constant opening angle are cones with vanishing opening angle, i.e., rotational cylinders. However,
these cylinders need not be congruent. Hence, we now discuss envelopes of congruent rotational cylinders, i.e. offsets of ruled surfaces,
which appear in flank CNC machining with a cylindrical tool.

The derivation of the PDE is analogous to Sections~\ref{sec-model}--\ref{sec-conics}. Passing to the isotropic model, we reduce the characterization of surfaces in question to the characterization of surfaces containing a special conic through each point.
We parametrize the conic by trigonometric functions and identify the particular conditions on the conic. Differentiation with respect to the parameter gives the required PDE.

\begin{proposition}\label{prop-offset-ruled}
Assume (*). Through each point of $\Phi$ there passes an oriented cylinder of fixed radius $R$ which is tangent to $\Phi$ along a continuous curve (containing the point), has inwards oriented normals, and the axis nonparallel to the plane $z=0$, if and only if through each point of the surface $\Phi^i$ there passes an arc of a conic satisfying the following condition:

$(R)$ the top view of the conic is the stereographic projection of a great circle (not passing through the projection center $(0,0,-1)$), and the plane of the conic passes through the point $(0,0,R)$.
\end{proposition}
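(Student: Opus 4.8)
The plan is to follow the template of Propositions~\ref{prop-cone-translation} and~\ref{prop-translation}: the only genuinely new ingredient is a description of which conics in the isotropic model arise as the images $C^i$ of rotational cylinders of radius $R$, and once that dictionary is in place the proposition follows from the same tangency argument as Proposition~\ref{prop-translation}. First I would establish the ``cylinder $\to$ conic'' half. Given a cylinder $C$ of radius $R$ with inwards normals and axis $\ell$ not parallel to $z=0$, view $C$ as the envelope of the one-parameter family of spheres of radius $R$ (with inwards normals) whose centers fill $\ell$. Every tangent plane of $C$ is tangent to each of these spheres, so $C^i$ lies in the intersection of the corresponding paraboloids~\eqref{iso-sphere}, all sharing the same value of $R$. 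Subtracting the equations~\eqref{iso-sphere} of two such paraboloids, the quadratic terms combine into $\tfrac{m_3-m_3'}{2}(x^2+y^2-1)$, where $(m_1-m_1',m_2-m_2',m_3-m_3')$ is a direction vector of $\ell$; since $\ell$ is not parallel to $z=0$ we have $m_3\ne m_3'$, and after dividing we obtain a top-view circle of the form $x^2+y^2-\alpha x-\beta y-1=0$. By~\eqref{eq-inverse-stereographic}, such circles are exactly the stereographic projections of great circles of the unit sphere not passing through $(0,0,-1)$ --- which is also clear geometrically, since the oriented normals of $C$ fill precisely the great circle orthogonal to $\ell$, and that great circle misses $(0,0,-1)$ exactly when $\ell$ is not horizontal. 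This gives the first half of condition~$(R)$.

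For the second half, substitute $x^2+y^2=1+\alpha x+\beta y$ into~\eqref{iso-sphere}: the two constant terms $\tfrac{R+m_3}{2}$ and $\tfrac{R-m_3}{2}$ add to $R$ and the $m_3$-dependence cancels, so along $C^i$ one has $z=R+(\text{linear in }x,y)$. Hence $C^i$ is a planar conic whose plane passes through $(0,0,R)$, which is the remaining half of~$(R)$. Conversely, I would reconstruct a cylinder from any conic satisfying~$(R)$: its top view is a circle $x^2+y^2-\alpha x-\beta y-1=0$, and (being the projection of a full circle) its plane is a graph $z=R+\lambda x+\mu y$. Matching this, restricted to the top-view circle, against~\eqref{iso-sphere} forces $m_1=\tfrac{(R+m_3)\alpha}{2}-\lambda$ and $m_2=\tfrac{(R+m_3)\beta}{2}-\mu$ for every value of $m_3$; so there is a whole line $\ell$ of centers, with direction $(\tfrac{\alpha}{2},\tfrac{\beta}{2},1)$ (not parallel to $z=0$), of spheres of radius $R$ with inwards normals whose paraboloids all contain the given conic. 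Distinct values of $m_3$ give paraboloids with distinct leading coefficients, hence genuinely distinct paraboloids, whose intersection is a conic; since it contains the given conic it equals it. Therefore the given conic is the set of common points of the whole paraboloid family, i.e. it is $C^i$ for the cylinder $C$ obtained as the envelope of this family of spheres (radius $R$, inwards normals, axis $\ell$).

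With this dictionary, the proposition reduces to the tangency correspondence exactly as in Proposition~\ref{prop-translation}: a cylinder $C$ is tangent to $\Phi$ along a curve if and only if $C$ and $\Phi$ share a one-parameter family of tangent planes, if and only if $C^i$ and $\Phi^i$ share an arc of a conic; because $\Phi$ has nowhere vanishing Gaussian curvature this contact curve is never a ruling of $C$, so the shared family of planes is genuinely one-parameter and the corresponding conic arc is nondegenerate and transversal to the rulings of $C$, as is needed in both directions. Applying this at each point of $\Phi$ (respectively, each point of $\Phi^i$) yields the stated equivalence.

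The main obstacle is the converse direction: one must verify that the reconstructed family of spheres really produces a nondegenerate cylinder whose isotropic image is the prescribed conic and nothing larger, that the sign convention of~\eqref{iso-sphere} delivers inwards normals, that the reconstructed axis is indeed not parallel to $z=0$, and --- as in the proof of Proposition~\ref{prop-translation} --- that contact of $C^i$ with $\Phi^i$ along an arc translates into tangency of $C$ with $\Phi$ along a curve rather than along a ruling, which is where the non-vanishing Gaussian curvature of $\Phi$ is used. The purely computational step, checking that~\eqref{iso-sphere} restricted to a great-circle top view has the form $z=R+(\text{linear})$, is short but is the crux that produces the point $(0,0,R)$ appearing in condition~$(R)$.
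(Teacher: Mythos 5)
Your proposal is correct and follows essentially the same route as the paper: the cylinder is encoded by the radius-$R$ spheres centered on its axis, their isotropic images are the paraboloids \eqref{iso-sphere}, their intersection is the conic $C^i$ whose top view is the stereographic projection of the Gaussian-image great circle, and tangency along a curve corresponds to containment of a conic arc exactly as in Proposition~\ref{prop-translation}. The only difference is cosmetic: the paper obtains the plane through $(0,0,R)$ by picking the one sphere of the family tangent to $z=0$ from below, whose isotropic image is already a plane through $(0,0,R)$ (that sphere being tangent to the oriented plane $z=-2R$ with normal $(0,0,1)$), whereas you get $z=R+\lambda x+\mu y$ by direct substitution and additionally spell out the converse reconstruction of the axis that the paper dismisses as ``analogous''.
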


\begin{remark} \label{rem-outwards} A similar propositions holds for a cylinder with outwards oriented normals, only $(0,0,R)$ is replaced by $(0,0,-R)$.
\end{remark}

\begin{proof}
Let $C$ be an oriented cylinder of radius $R$ with inwards oriented normals and the axis nonparallel to the plane $z=0$. The oriented tangent planes to $C$ are the common oriented tangent planes of some two oriented spheres $S_1$ and $S_2$ of radius $R$ with inwards oriented normals. Then $C^i$ is the intersection of $S_1^i$ and $S_2^i$. Assume that $S_1$ is contained in the halfspace $z\le 0$ and tangent to the plane $z=0$. Then $S_2^i$ is a paraboloid of form~\eqref{iso-sphere}, and $S_1^i$ is a plane. Hence $C^i$ is a conic.
Since the oriented sphere $S_1$ is tangent to the oriented plane $P$ given by $z=-2R$ with the normal $(0,0,1)$, by~\eqref{eq-isotropic-model} it follows that the plane $S_1^i$ of the conic passes through the point
$P^i=(0,0,R)$. The top view of $C^i$ is the stereographic projection of the Gaussian spherical image of $C$, i.e., the projection of a great circle. Now if $C$ is tangent to $\Phi$ along a curve (which cannot be a ruling because by (*) $\Phi$ has nonvanishing Gaussian curvature), then $C^i$ is contained in $\Phi^i$. The proof of the reciprocal implication is analogous.
\end{proof}

\begin{proposition}\label{prop-conic-plane}
  Consider conic~\eqref{eq-conj-reciprocal}, where $a,b,z$
  are given by~\eqref{eq-prop-tangency-order2} for some $C^2$ function $f\colon D\to \mathbb{R}$. Then the conic satisfies condition~$(R)$, if and only if the following two equations hold:
  \begin{gather}
  x^2+y^2+1+2xu+2yv=0,\label{eq-conj-characterization1zero}
  \\
  2(u^2+v^2)(f-xf_x-yf_y-R)+(x^2+y^2+1)(f_{xx} v^2 - 2f_{xy}uv + f_{yy}u^2)=0.
  \label{eq-prop-conic-plane}
  \end{gather}
\end{proposition}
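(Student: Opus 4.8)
The plan is to characterize condition~$(R)$ directly in terms of the coefficients $u,v,x,y$ of the parametrization~\eqref{eq-conj-reciprocal}, and then translate those two conditions into equations involving $f$ via~\eqref{eq-prop-tangency-order2}. Condition~$(R)$ has two parts: (i) the top view of the conic is the stereographic projection of a \emph{great} circle (not through $(0,0,-1)$), and (ii) the plane of the conic passes through $(0,0,R)$. I will handle these two parts one at a time.

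First I would treat part~(i). The top view of conic~\eqref{eq-conj-reciprocal} is the Euclidean circle through $(x,y)$ centered at $(x+u,y+v)$, hence of radius $\sqrt{u^2+v^2}$. A circle in the plane $z=0$ is the stereographic projection from $(0,0,-1)$ of a great circle of the unit sphere precisely when the great-circle plane passes through the origin, equivalently when the image circle, together with the point $(0,0,-1)$, lies on a sphere centered at the origin — i.e., when the circle is "orthogonal" to the stereographic image of the equator in the appropriate sense. Concretely, the inverse stereographic image of the top-view circle is a circle on the unit sphere, and it is a great circle iff its plane contains the center of the sphere; computing that plane (the radical-plane-type condition) gives the power-of-the-point relation $\mathrm{pow}(O) = -1$, where $O$ is the origin and the power is taken with respect to the top-view circle. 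Since the power of $O$ with respect to the circle of center $(x+u,y+v)$ and radius $\sqrt{u^2+v^2}$ equals $(x+u)^2+(y+v)^2-(u^2+v^2) = x^2+y^2+2xu+2yv$, the great-circle condition is exactly $x^2+y^2+2xu+2yv = -1$, which is~\eqref{eq-conj-characterization1zero}. (This also matches the limit $\theta\to\pi/2$, i.e. $\tan\theta\to\infty$, of~\eqref{eq-conj-characterization1}: dividing~\eqref{eq-conj-characterization1} by $4\tan^2\theta$ and letting $\tan\theta\to\infty$ forces $u^2+v^2\to\infty$ unless $x^2+y^2+1+2xu+2yv\to 0$, which is the cylinder normalization.) I would verify the sign/center details carefully, but the identity is clean.

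Next, part~(ii): the plane of the conic must pass through $(0,0,R)$. Conic~\eqref{eq-conj-reciprocal} lies in the plane through $(x,y,z)$ that contains the two tangent vectors obtained by differentiating~\eqref{eq-conj-reciprocal} at $t=0$, namely $(\dot x,\dot y,\dot z)|_0 = (v,-u,a)$ and $(\ddot x,\ddot y,\ddot z)|_0 = (u,v,b)$. So the plane has normal direction $(v,-u,a)\times(u,v,b) = (-ub - av,\; ub - av? \ldots)$ — more precisely the normal is $(-ub - av,\ va - ub,\ u^2+v^2)$ up to sign; I would compute this cross product once. The point $(0,0,R)$ lies in this plane iff $\bigl((0,0,R)-(x,y,z)\bigr)\cdot N = 0$, which after substituting $a,b,z$ from~\eqref{eq-prop-tangency-order2} and clearing denominators (using~\eqref{eq-conj-characterization1zero} to simplify) should collapse to~\eqref{eq-prop-conic-plane}. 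The main bookkeeping obstacle is exactly this substitution-and-simplification: one has $a = f_x v - f_y u$, $b = f_x u + f_y v + f_{xx}v^2 - 2f_{xy}uv + f_{yy}u^2$, and $z = f$, so the plane condition becomes a polynomial relation in $u,v,x,y$ and the first and second derivatives of $f$; I expect the terms linear in the first derivatives to organize into $(u^2+v^2)(f - x f_x - y f_y)$ (using $x^2+y^2+2xu+2yv = -1$, i.e. the coefficient of $f_x$ combining $x$-type and $u$-type contributions), the $R$-term into $-(u^2+v^2)R$ — wait, into $+(u^2+v^2)R$ with the opposite sign, which accounts for the stated form $2(u^2+v^2)(f - x f_x - y f_y - R)$ after multiplying by $2$ — and the second-derivative terms into $(x^2+y^2+1)(f_{xx}v^2 - 2f_{xy}uv + f_{yy}u^2)$, the factor $x^2+y^2+1$ appearing because $\frac{n_3}{n_3+1} = \frac12(1-x^2-y^2)$ and $\frac{1}{n_3+1} = \frac12(x^2+y^2+1)$ enter through the relationship between $z = f$ at the point $(x,y)$ and the actual height $R$ of the target point in $\mathbb{R}^3$-coordinates. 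For the outwards-oriented case (Remark~\ref{rem-outwards}) the only change is that $S_1$ is tangent to $z = 2R$ rather than $z = -2R$, so $P^i = (0,0,-R)$ and $R$ is replaced by $-R$ throughout, giving the sign flip claimed.

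The hard part will be keeping track of signs and of the factor-of-two normalizations in passing between "coordinates in the isotropic model" $(x,y,z)$, the Euclidean data $(a,b)$ describing the tilt of the conic's plane, and the Euclidean height $R$ of the prescribed point $(0,0,R)$; the cross-product computation and the algebraic collapse are routine but error-prone, and it is worth double-checking~\eqref{eq-prop-conic-plane} against a simple explicit example (e.g. a cylinder of revolution in a generic position, for which $f$ is quadratic as in~\eqref{iso-sphere} with $R+m_3 = R - m_3$ forced, i.e. $m_3 = 0$) before declaring the proof complete.
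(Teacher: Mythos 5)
Your proposal is correct and essentially the paper's proof: the power-of-the-origin computation yields exactly \eqref{eq-conj-characterization1zero} (the paper obtains it by setting $\theta=0$, i.e.\ $\tan\theta=0$, in \eqref{eq-conj-characterization1}), your coplanarity condition $\det\bigl(\begin{smallmatrix} x & y & z-R\\ v & -u & a\\ u & v & b\end{smallmatrix}\bigr)=0$ is the same determinant the paper reaches by eliminating $A,B$ from $z(t)=Ax(t)+By(t)+R$, and after substituting \eqref{eq-prop-tangency-order2} it does collapse to \eqref{eq-prop-conic-plane}, the factor $x^2+y^2+1$ arising because \eqref{eq-conj-characterization1zero} gives $xu+yv=-\tfrac12(x^2+y^2+1)$. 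Two small slips to fix in the write-up: the cylinder case corresponds to $\theta=0$, not $\theta\to\pi/2$ (so no limiting argument is needed --- \eqref{eq-conj-characterization1} literally becomes the square of \eqref{eq-conj-characterization1zero}), and the normal of the conic's plane is $(-ub-av,\ au-vb,\ u^2+v^2)$, though only the determinant, which you set up correctly, actually enters the argument.
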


\begin{proof} Since a great circle in the unit sphere has intrinsic radius $\pi/2$,
substituting $\theta=0$ into~\eqref{eq-conj-characterization1}, we get~\eqref{eq-conj-characterization1zero}. If the plane of the conic passes through $(0,0,R)$, we get $z(t)=Ax(t)+By(t)+R$ for some constants $A,B\in\mathbb{R}$. Hence
$$
\begin{cases}
z=Ax+By+R, & \\
a=Av-Bu,   & \\
b=Au+Bv.   &
\end{cases}
$$
The latter two equations in $A$ and $B$ are linearly independent because $(u,v)\ne (0,0)$ by~\eqref{eq-conj-characterization1zero}.
Thus the system has a solution $(A,B)$, if and only if
$\det\left(\begin{smallmatrix}
x & y  & z-R\\
v & -u & a  \\
u & v  & b
\end{smallmatrix}\right)=0$.
Using~\eqref{eq-prop-tangency-order2} and~\eqref{eq-conj-characterization1zero}, we get~\eqref{eq-prop-conic-plane}.
\end{proof}

Combining Propositions~\ref{prop-offset-ruled}, \ref{prop-tangency-order}, and~\ref{prop-conic-plane} we get the following result.

\begin{corollary}[recognition of ruled surface offsets] \label{cor-offset-ruled}
Assume (*). If through each point of $\Phi$ there passes an oriented cylinder of fixed radius $R$ which is tangent to $\Phi$ along a continuous curve (containing the point), has inwards oriented normals and the axis nonparallel to the plane $z=0$,
then for each $(x,y)\in D$ the three equations~\eqref{eq-conj-characterization2}, \eqref{eq-conj-characterization1zero},
\eqref{eq-prop-conic-plane} have a common real solution $(u,v)$.
\end{corollary}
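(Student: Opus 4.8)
The plan is to chain together the three cited propositions, exactly as the paper indicates; since only the forward implication is asserted, no reconstruction of conics or cylinders is needed. First I would invoke Proposition~\ref{prop-offset-ruled}: under assumption~(*), the hypothesis that through each point of $\Phi$ there passes an oriented cylinder of radius $R$ with inwards normals and axis nonparallel to $z=0$, tangent to $\Phi$ along a curve through the point, is equivalent to the statement that through each point of $\Phi^i$ (the graph of $f$) there passes an arc of a conic $C_{x,y}$, contained in $\Phi^i$ and satisfying condition~$(R)$. From here on we work with such an arc.

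Second, I would put $C_{x,y}$ into the standard trigonometric form~\eqref{eq-conj-reciprocal}. The parametrization argument in the proof of Proposition~\ref{prop-conic} uses only that a conic is a planar curve and that its top view is a Euclidean circle through the contact point; for a condition-$(R)$ conic the top view is the stereographic image of a great circle, hence a genuine circle of positive radius (the great circle misses the projection center $(0,0,-1)$). So the same argument applies and $C_{x,y}$ is given by~\eqref{eq-conj-reciprocal} for some real parameters $u,v,a,b,z$ with $(u,v)\neq(0,0)$, where $(x,y,z)$ is the contact point and $(x+u,y+v)$ is the center of the top-view circle.

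Third, because the whole arc lies in the graph of $f$ we have $z(t)=f(x(t),y(t))$ identically, i.e.\ the conic has contact of infinite order with the graph. Applying Proposition~\ref{prop-tangency-order}: contact of order~$2$ forces the relations~\eqref{eq-prop-tangency-order2} expressing $z,a,b$ through $f$ and its first and second derivatives, and contact of order~$3$ forces~\eqref{eq-conj-characterization2}. Since $a,b,z$ are now exactly the quantities appearing in~\eqref{eq-prop-tangency-order2}, I can feed $C_{x,y}$ into Proposition~\ref{prop-conic-plane}: condition~$(R)$ then becomes equivalent to the pair~\eqref{eq-conj-characterization1zero} (the great-circle condition, which is simply the case $\theta=0$ of~\eqref{eq-conj-characterization1}) together with~\eqref{eq-prop-conic-plane} (the plane-through-$(0,0,R)$ condition). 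Collecting everything, $(u,v)$ is a common real solution of~\eqref{eq-conj-characterization2},~\eqref{eq-conj-characterization1zero} and~\eqref{eq-prop-conic-plane}, which is the assertion; it is automatically nonzero since~\eqref{eq-conj-characterization1zero} admits no solution with $(u,v)=(0,0)$.

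I do not expect a genuine obstacle: the corollary is a bookkeeping combination of Propositions~\ref{prop-offset-ruled},~\ref{prop-tangency-order} and~\ref{prop-conic-plane}. The only points requiring a sentence of care are that the parametrization~\eqref{eq-conj-reciprocal}, originally derived for condition-$(\Theta)$ conics, transfers verbatim to condition-$(R)$ conics because its derivation never used the opening angle, and that the hypotheses of Proposition~\ref{prop-offset-ruled} --- inwards normals, axis nonparallel to $z=0$, assumption~(*) --- match exactly those of the corollary.
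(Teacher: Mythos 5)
Your proposal is correct and follows essentially the same route as the paper, which proves the corollary precisely by combining Propositions~\ref{prop-offset-ruled}, \ref{prop-tangency-order}, and~\ref{prop-conic-plane}; your extra remarks (that the parametrization~\eqref{eq-conj-reciprocal} transfers to condition-$(R)$ conics and that~\eqref{eq-conj-characterization1zero} forces $(u,v)\neq(0,0)$) are accurate and only make explicit what the paper leaves implicit.
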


We keep just $3$ equations in $2$ variables $u$ and $v$ because it is already a nontrivial restriction on the function~$f$.

\section{Channel surfaces and pipe surfaces} \label{sec:sphere}

For the sake of completeness and due to the similarity with our previous results, we also address
envelopes of spheres. We first consider the case in which the spheres may have varying radius,
leading to channel surfaces as envelopes. We then specialize to constant radius and pipe
surfaces.

\begin{proposition}\label{prop-channel}
Assume~(*). Then the following three conditions are equivalent:
\begin{itemize}
  \item through each point of $\Phi$ there passes an oriented sphere which is tangent to $\Phi$ along a circular arc (containing the point) but is not tangent to the oriented plane $z=0$ with the normal $(0,0,-1)$, and all the oriented unit normals to the sphere on the circle are distinct from $(0,0,-1)$;
  \item through each point of $\Phi^i$ there passes a rotational paraboloid with vertical axis which is tangent to $\Phi^i$ along an arc of a conic (containing the point) distinct from a parabola;
  \item through each point of $\Phi^i$ there passes an arc of a conic contained in $\Phi^i$ and satisfying the following condition:

      $(T)$ the top view of the conic is a circle, the tangent planes to $\Phi^i$ along the arc of the conic have a unique common point, and the top view of the resulting point is the center of the circle.
\end{itemize}
\end{proposition}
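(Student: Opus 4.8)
The plan is to prove the two equivalences ``first condition $\Leftrightarrow$ second'' and ``second $\Leftrightarrow$ third'' separately, translating everything through the isotropic model. First I would record three facts. By~\eqref{iso-sphere} an oriented sphere corresponds to a rotational paraboloid with vertical axis, which degenerates to a plane precisely when $R+m_3=0$, i.e.\ when the sphere is tangent to the excluded oriented plane $z=0$ with normal $(0,0,-1)$; so that exclusion matches ``genuine paraboloid'' on the model side. A planar conic lying on such a paraboloid is cut by a non-vertical plane unless it is a parabola, hence its top view is a Euclidean circle (a line in the parabola case); so it is an isotropic circle in the sense of the paper, i.e.\ the image $C^i$ of a cone of revolution, the parabola case corresponding to a cylinder (handled separately in Proposition~\ref{prop-offset-ruled}). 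Finally, from~\eqref{eq-f}--\eqref{eq-fy} the $1$-jet of $f$ at $(x,y)$, i.e.\ the tangent plane of $\Phi^i$ at $P^i$, is equivalent data to the pair consisting of the contact point $\mathbf{r}$ of the plane $P$ with $\Phi$ and the normal of $P$. Consequently, if two oriented surfaces are tangent to each other along a curve $\gamma\subset\mathbb{R}^3$ (sharing oriented tangent planes along $\gamma$), their isotropic images both contain the curve $\gamma^i=\{P^i: P \text{ a common oriented tangent plane along }\gamma\}$ and are tangent to each other along $\gamma^i$, and conversely; I will use this tangency-transfer principle throughout (note that opposite orientations along $\gamma$ would already make the two images fail to share a curve).

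For the equivalence of the first two conditions: if an oriented sphere $S$ is tangent to $\Phi$ along a circular arc $c$ which is not part of a great circle, then the common oriented tangent planes of $S$ and $\Phi$ along $c$ are exactly the tangent planes of the rotational cone circumscribed to $S$ along $c$; hence $c^i$ is an arc of an isotropic circle, i.e.\ of a conic distinct from a parabola, and by the tangency-transfer principle the paraboloid $S^i$ is tangent to $\Phi^i$ along $c^i$. Conversely, a rotational paraboloid with vertical axis tangent to $\Phi^i$ along an arc of a conic distinct from a parabola is $S^i$ for an oriented sphere $S$; the conic is an isotropic circle $C^i$ for a rotational cone circumscribed to $S$ along some circle $c$, and the tangency-transfer principle gives that $S$ is tangent to $\Phi$ along $c$. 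The excluded degenerate configurations correspond on the two sides by construction.

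For the equivalence of the second and third conditions: let $c\subset\Phi^i$ be an arc of a planar conic with circular top view, with plane $z=\alpha x+\beta y+\gamma$ and top-view circle $(x-x_c)^2+(y-y_c)^2=\rho^2$. If a vertical-axis paraboloid $Q=S^i$ is tangent to $\Phi^i$ along $c$, then by the polarity of quadrics all tangent planes to $Q$ along the plane section $c$ pass through one point, the pole of the plane of $c$ with respect to $Q$; writing $Q\colon z=\tfrac a2(x^2+y^2)+bx+cy+d$ and using that $Q$ contains $c$, a short computation shows this pole is $\bigl(x_c,\,y_c,\ \alpha x_c+\beta y_c+\gamma-a\rho^2\bigr)$, whose top view is the center of the top-view circle. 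By the tangency-transfer principle these are also the tangent planes of $\Phi^i$ along $c$, so condition~$(T)$ holds. Conversely, assume $(T)$ with common point $M=(x_c,y_c,z_M)$. The vertical-axis paraboloids through $c$ form a one-parameter family $Q_a$, $a\in\mathbb{R}$: the conditions $ax_c+b=\alpha$, $ay_c+c=\beta$, $\tfrac a2(\rho^2-x_c^2-y_c^2)+d=\gamma$ express $b,c,d$ through $a$. By the previous computation the tangent planes of $Q_a$ along $c$ meet in the point with top view $(x_c,y_c)$ and height $\alpha x_c+\beta y_c+\gamma-a\rho^2$, so the choice $a=\bigl(\alpha x_c+\beta y_c+\gamma-z_M\bigr)/\rho^2$ makes that point equal $M$. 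For this $a$, at every $P^i\in c$ the tangent planes of $\Phi^i$ and of $Q_a$ both contain the tangent line of $c$ at $P^i$ and pass through $M$; as $M$ does not lie on that tangent line (its top view, the center, is not on the tangent line of the top-view circle), the two tangent planes coincide, so $Q_a$ is tangent to $\Phi^i$ along $c$; and in general position $M$ is not in the plane of $c$, so $a\ne 0$ and $Q_a=S^i$ for an oriented sphere $S$, which proves ``third $\Rightarrow$ second''.

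The main obstacle is making the tangency-transfer principle precise and airtight --- that oriented contact along a curve in $\mathbb{R}^3$ corresponds exactly to tangency along the image curve in the isotropic model --- and keeping track of the degeneracies that must be excluded consistently on both sides: oriented planes with normal $(0,0,-1)$, great circles versus parabolas and their cylinder counterparts, a top-view circle of radius $\rho=0$, and the general-position requirement $M\notin(\text{plane of }c)$ that prevents the reconstructed paraboloid from collapsing to a plane.
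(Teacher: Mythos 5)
Your proof is correct in substance and follows the same overall decomposition as the paper: both split the statement into the equivalences (1)$\Leftrightarrow$(2) and (2)$\Leftrightarrow$(3), both rest on the fact that the map $\Phi\mapsto\Phi^i$ preserves oriented tangency (your ``tangency-transfer principle'', which the paper likewise invokes without proof as a standard fact of Laguerre geometry), and both identify spheres with vertical-axis rotational paraboloids via~\eqref{iso-sphere}. Where you genuinely diverge is in (2)$\Leftrightarrow$(3): the paper reduces a nonvertical plane section of the paraboloid to a horizontal one by the shear $(x,y,z)\mapsto(x,y,z-ax-by)$ and then appeals to rotational symmetry, while you compute the pole of the plane of the conic with respect to $Q\colon z=\tfrac a2(x^2+y^2)+bx+cy+d$ explicitly, obtaining $\bigl(x_c,y_c,\alpha x_c+\beta y_c+\gamma-a\rho^2\bigr)$ (I checked this; it is correct). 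Your computation buys something concrete for the converse (3)$\Rightarrow$(2), which the paper dismisses as ``analogous'': you actually exhibit the one-parameter family $Q_a$ through the conic and solve for the unique $a$ placing the apex at the given common point $M$, and your observation that condition $(T)$ with a \emph{unique} common point automatically forces $M$ off the plane of the conic (so $a\ne 0$) makes the ``general position'' caveat unnecessary. This is more explicit than the paper and would be a useful addition.

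One slip to repair: your dichotomy ``conic with circular top view $=$ image of a cone of revolution, parabola $=$ image of a cylinder'' is not correct. By Proposition~\ref{prop-offset-ruled}, a cylinder whose axis is \emph{not} parallel to $z=0$ also maps to a conic with circular top view (the stereographic projection of a great circle not through the projection center); the parabola arises precisely when the cone or cylinder has a tangent plane with normal $(0,0,-1)$. Consequently, in your (1)$\Rightarrow$(2) argument you restrict to circular arcs ``not part of a great circle'' and never handle the great-circle case, in which the circumscribed developable along $c$ is a cylinder rather than a cone. The first condition does not exclude this case; it only excludes normals equal to $(0,0,-1)$, which (since the Gaussian image of the circle then avoids the south pole) is exactly what guarantees the image is still a conic with circular top view rather than a parabola. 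The fix is one sentence --- the paper's phrase ``form a cone or a cylinder'' together with Propositions~\ref{prop-cone-translation}, \ref{prop-offset-ruled} and Remark~\ref{rem-outwards} covers both cases --- but as written your argument leaves that case open, and the same remark applies to your (2)$\Rightarrow$(1) step, where the plane section of $S^i$ may correspond to a circumscribed cylinder (when the auxiliary sphere has the same signed radius as $S$) rather than a cone; the tangency transfer works identically there.
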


\begin{proof}
The first condition implies the second one as follows. Indeed,  by~\eqref{iso-sphere} the oriented sphere is represented by a rotational paraboloid with a vertical axis in the isotropic model. The tangent planes along a circular arc on a sphere form a cone or a cylinder, hence are represented by a conic (actually an isotropic circle) on the paraboloid, by Propositions~\ref{prop-cone-translation}, \ref{prop-offset-ruled}, and Remark~\ref{rem-outwards}. Since the map $\Phi\mapsto \Phi^i$ preserves oriented tangency, the paraboloid is tangent to $\Phi^i$ along an arc of the conic.

The second condition implies the third one because each conic (distinct from a parabola) on a rotational paraboloid with vertical axis satisfies condition~(T) (with $\Phi^i$ replaced by the paraboloid). Indeed, any nonvertical planar section of such paraboloid can be made horizontal by an affine map of the form $(x,y,z)\mapsto (x,y,z-ax-by)$, and for a horizontal section condition~(T) obviously holds by rotational symmetry.

The proofs of reciprocal implications are analogous (one cannot end up with a sphere of radius zero because $\Phi$ is smooth by (*)).
\end{proof}

\begin{proposition}\label{prop-conic-cone}
  Consider conic~\eqref{eq-conj-reciprocal}, where $a,b,z$
  are given by~\eqref{eq-prop-tangency-order2} for some $C^2$ function $f\colon D\to \mathbb{R}$. If  an arc of the conic parametrized by $t\in(-\epsilon,\epsilon)$ is contained in the graph $\Phi^i$ of $f$ and satisfies condition~~$(T)$, then
  \begin{equation}\label{eq-prop-conic-cone}
  (f_{xx}-f_{yy}) uv + f_{xy}(v^2-u^2)=0.
  \end{equation}
\end{proposition}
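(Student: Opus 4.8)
The plan is to deduce~\eqref{eq-prop-conic-cone} from the single incidence contained in condition~$(T)$, via one differentiation. First I would observe that the top view of conic~\eqref{eq-conj-reciprocal} is \emph{automatically} the circle of radius $\sqrt{u^2+v^2}$ centered at $(x+u,y+v)$: indeed $(x(t)-x-u)^2+(y(t)-y-v)^2=(v\sin t-u\cos t)^2+(u\sin t+v\cos t)^2=u^2+v^2$. Consequently, using also that the arc lies on the graph $\Phi^i$ of $f$, the content of $(T)$ is exactly this: there is a point $\mathbf p$, with top view $(x+u,y+v)$, lying on every tangent plane of the graph of $f$ at the points $(x(t),y(t),f(x(t),y(t)))$ of the conic.

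Next I would write this incidence out. The tangent plane to the graph at parameter $t$ is $Z=f_x\,(X-x(t))+f_y\,(Y-y(t))+f$, with $f,f_x,f_y$ evaluated at $(x(t),y(t))$. Substituting $\mathbf p=(x+u,\,y+v,\,p_3)$ for $(X,Y,Z)$ and using $x+u-x(t)=u\cos t-v\sin t$ and $y+v-y(t)=u\sin t+v\cos t$ (read off from~\eqref{eq-conj-reciprocal}), I obtain the identity, valid for all $t$ in the arc,
\[p_3=f_x\,(u\cos t-v\sin t)+f_y\,(u\sin t+v\cos t)+f,\]
where $f,f_x,f_y$ are taken along the curve $(x(t),y(t))$.

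Then I would differentiate this identity in $t$; since $p_3$ is constant, the left side becomes $0$. Evaluating at $t=0$, where $x(0)=x$, $y(0)=y$, $\dot x(0)=v$, $\dot y(0)=-u$, the chain rule gives $\tfrac{d}{dt}f_x|_0=f_{xx}v-f_{xy}u$, $\tfrac{d}{dt}f_y|_0=f_{xy}v-f_{yy}u$, $\tfrac{d}{dt}f|_0=f_xv-f_yu$, while the explicit trigonometric factors have derivatives $-v$ and $u$ at $t=0$. Collecting terms, the contributions $-f_xv$ and $+f_xv$, as well as $+f_yu$ and $-f_yu$, cancel, and what remains is precisely $(f_{xx}-f_{yy})uv+f_{xy}(v^2-u^2)=0$, i.e.~\eqref{eq-prop-conic-cone}.

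I do not expect a real obstacle here; the whole argument is a short differentiation. The two points needing minor care are: (i) the reduction of condition~$(T)$ to the incidence of the surface tangent planes with a fixed point over the center of the top-view circle — for this one must use that the conic truly lies on $\Phi^i$, so that its tangent planes are honest tangent planes of the graph of $f$; and (ii) the degenerate case $(u,v)=(0,0)$, where the top-view circle and~\eqref{eq-prop-conic-cone} both degenerate and there is nothing to prove. Note also that only the \emph{first} derivative at $t=0$ of the identity is used; the vanishing of the higher derivatives, although true, is not needed.
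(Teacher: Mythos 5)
Your proof is correct and follows essentially the same route as the paper: both express condition~$(T)$ as the incidence of a fixed point with top view $(x+u,y+v)$ (the center of the top-view circle) with the tangent planes of the graph along the arc, and then differentiate that identity once in $t$ and evaluate at $t=0$ using $\dot x(0)=v$, $\dot y(0)=-u$. The only cosmetic difference is that the paper writes the incidence via $z(t)-z_0$ and uses $\dot z(0)=a=f_xv-f_yu$ from Proposition~\ref{prop-tangency-order}, while you substitute $z(t)=f(x(t),y(t))$ directly; the computation is the same.
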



\begin{proof}
  Let $(x+u,y+v,z_0)$ be the common point of the tangent planes to the graph of $f\colon D\to \mathbb{R}$ along the arc of the conic~\eqref{eq-conj-reciprocal}. Then  the line through the variable point $(x(t),y(t),z(t))$ on the arc and the common point lies on the tangent plane at the variable point. Hence
  $$
  z(t)-z_0=f_x\cdot(x(t)-x-u)+f_y\cdot(y(t)-y-v).
  $$
  Differentiating with respect to $t$ and substituting $t=0$,
  ${x}(0)=x$, $y(0)=y$, $\dot{x}(0)=v$, $\dot{y}(0)=-u$, $\dot{z}(0)=a=f_x v-f_y u$, given by Proposition~\ref{prop-tangency-order}, we arrive at~\eqref{eq-prop-conic-cone}.
\end{proof}

\begin{corollary}[recognition of channel surfaces] \label{cor-channel}
  Assume (*). If through each point of $\Phi$ there passes an oriented sphere which is tangent to $\Phi$ along a circular arc (containing the point) but is not tangent to the oriented plane $z=0$ with the normal $(0,0,-1)$, and the oriented unit normals to the sphere on the circle are distinct from $(0,0,-1)$, then for each $(x,y)\in D$ the following two equations have a common nonzero real solution~$(u,v)$:
  \begin{equation}\label{eq-cor-channel}
  \begin{cases}
    (f_{xx}-f_{yy}) uv + f_{xy}(v^2-u^2)=0, &  \\
    f_{xxx}v^3-3f_{xxy}v^2u+3f_{xyy}vu^2-f_{yyy}u^3=0. &
  \end{cases}
  \end{equation}
\end{corollary}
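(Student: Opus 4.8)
The plan is to obtain Corollary~\ref{cor-channel} as a bookkeeping consequence of three results already established: Proposition~\ref{prop-channel} to transport the hypothesis into the isotropic model, Proposition~\ref{prop-tangency-order} to extract the third‑order contact equation, and Proposition~\ref{prop-conic-cone} to extract the equation coming from condition~$(T)$; the two desired equations~\eqref{eq-cor-channel} then appear after a single subtraction. First I would apply Proposition~\ref{prop-channel}: under assumption~(*), $\Phi^i$ is the graph of a $C^4$ function $f\colon D\to\mathbb{R}$, and the hypothesis is equivalent to the statement that through each point $(x,y,z)\in\Phi^i$ there passes an arc of a conic contained in $\Phi^i$ and satisfying condition~$(T)$. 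Since condition~$(T)$ forces the top view of this conic to be a nondegenerate circle and a conic is a planar curve, the opening part of the proof of Proposition~\ref{prop-conic} applies verbatim (that part never invokes condition~$(\Theta)$): the conic is parametrized by~\eqref{eq-conj-reciprocal} for suitable $u,v,a,b\in\mathbb{R}$, with $(x+u,y+v)$ the center of the top-view circle. As the arc actually lies in the graph of $f$, it has contact of order $\ge 2$ (indeed infinite) with the graph at $(x,y,z)$, so by Proposition~\ref{prop-tangency-order} the coefficients $a,b,z$ are given by~\eqref{eq-prop-tangency-order2}; in particular the hypotheses of both Propositions~\ref{prop-tangency-order} and~\ref{prop-conic-cone} are satisfied.

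Next I would use contact of order $3$ (again because the arc lies in the graph): Proposition~\ref{prop-tangency-order} yields equation~\eqref{eq-conj-characterization2}, i.e.
$$f_{xxx}v^3-3f_{xxy}v^2u+3f_{xyy}vu^2-f_{yyy}u^3+3(f_{xx}-f_{yy})uv+3f_{xy}(v^2-u^2)=0.$$
Since the conic also satisfies condition~$(T)$ and its arc lies in $\Phi^i$, Proposition~\ref{prop-conic-cone} gives~\eqref{eq-prop-conic-cone}, namely $(f_{xx}-f_{yy})uv+f_{xy}(v^2-u^2)=0$. Subtracting three times the latter from the former cancels the second‑order terms and leaves exactly $f_{xxx}v^3-3f_{xxy}v^2u+3f_{xyy}vu^2-f_{yyy}u^3=0$. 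Together with $(f_{xx}-f_{yy})uv+f_{xy}(v^2-u^2)=0$ these are precisely the two equations~\eqref{eq-cor-channel}, and $(u,v)$ is a common solution. Finally $(u,v)\ne(0,0)$, because $(u,v)$ is the radius vector of the top-view circle from the point $(x,y)$ to its center, and that circle is nondegenerate, so $\sqrt{u^2+v^2}>0$.

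There is no deep obstacle here; the statement is a one‑directional necessary condition, so — in contrast to the reciprocal statements of Sections~\ref{sec-developable}--\ref{sec-conics} — no genericity reduction, no choice of a smooth branch of solutions, and no reconstruction of conics is required. The only points demanding a moment's care are the two observations just used: that the trigonometric parametrization~\eqref{eq-conj-reciprocal} of Proposition~\ref{prop-conic} is available for condition‑$(T)$ conics and not only for condition‑$(\Theta)$ ones (its derivation uses solely that the top view is a circle and that conics are planar), and the routine check that $(u,v)$ cannot vanish.
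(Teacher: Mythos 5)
Your proposal is correct and follows essentially the same route as the paper's own proof: Proposition~\ref{prop-channel} transfers the hypothesis to a condition-$(T)$ conic in $\Phi^i$, Proposition~\ref{prop-conic-cone} gives the first equation of~\eqref{eq-cor-channel}, Proposition~\ref{prop-tangency-order} gives~\eqref{eq-conj-characterization2}, and subtracting three times~\eqref{eq-prop-conic-cone} yields the second equation. The extra details you supply (applicability of the parametrization~\eqref{eq-conj-reciprocal} and the check that $(u,v)\ne(0,0)$) are consistent with, and merely make explicit, what the paper leaves implicit.
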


\begin{proof}
  By Propositions~\ref{prop-channel} and \ref{prop-conic-cone} the first equation in~\eqref{eq-cor-channel} follows.
  By Propositions~\ref{prop-channel} and~\ref{prop-tangency-order} we have~\eqref{eq-conj-characterization2}. Subtracting \eqref{eq-prop-conic-cone} multiplied by $3$, we get the second equation in~\eqref{eq-cor-channel}.
\end{proof}

Since each pipe surface is also an offset of a ruled surface, by Propositions~\ref{prop-offset-ruled}--\ref{prop-conic-cone} we get the following corollary.

\begin{corollary}[recognition of pipe surfaces] \label{cor-pipe}
  Assume (*). If through each point of $\Phi$ there passes an oriented sphere of fixed radius $R$ with inwards oriented normals, which is tangent to $\Phi$ along a circular arc (containing the point) of the same radius $R$ but is not tangent to the oriented plane $z=0$ with the normal $(0,0,-1)$, and the plane of the circle is nonparallel to $(0,0,-1)$, then for each $(x,y)\in D$ two equations~\eqref{eq-prop-conic-plane} and~\eqref{eq-prop-conic-cone} have a common nonzero real solution $(u,v)$.
\end{corollary}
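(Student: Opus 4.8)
The plan is to exploit the fact that a pipe surface carries two envelope structures simultaneously: by hypothesis it is an envelope of congruent spheres of radius $R$, and — being an offset of a ruled surface — it is also an envelope of congruent rotational cylinders of radius $R$. From the cylinder structure I would read off condition~$(R)$ on the isotropic image, and from the sphere structure condition~$(T)$; feeding these into Propositions~\ref{prop-conic-plane} and~\ref{prop-conic-cone} then produces the two equations~\eqref{eq-prop-conic-plane} and~\eqref{eq-prop-conic-cone}. The only step that is not pure bookkeeping is manufacturing the tangent cylinder out of the given tangent sphere, so that is where I would be most careful.

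First I would establish the underlying geometric lemma. Fix a point $\mathbf{r}\in\Phi$ and let $S$ be the oriented sphere of radius $R$ through it, with inwards normals, tangent to $\Phi$ along a circular arc $k$ of radius $R$; write $\mathbf{m}$ for the center of $S$. Since the radius of $k$ equals the radius of $S$, the arc $k$ lies on a great circle of $S$, so its supporting plane $\pi$ passes through $\mathbf{m}$ and $\mathbf{p}-\mathbf{m}\in\pi$ for every $\mathbf{p}\in k$. Let $Z$ be the oriented cylinder of radius $R$ with inwards normals whose axis is the line through $\mathbf{m}$ perpendicular to $\pi$. Then $Z\cap\pi=k$, and for every $\mathbf{p}\in k$ the radial direction of $Z$ at $\mathbf{p}$ is the orthogonal projection of $\mathbf{p}-\mathbf{m}$ onto $\pi$, which is $\mathbf{p}-\mathbf{m}$ itself; hence $Z$, $S$ and $\Phi$ share the same oriented tangent plane at each $\mathbf{p}\in k$, so $Z$ is tangent to $\Phi$ along $k$. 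Finally, the axis of $Z$ is perpendicular to $\pi$, and since $\pi$ is nonparallel to $(0,0,-1)$ the axis is nonparallel to the plane $z=0$; the same hypothesis also shows that no oriented unit normal of $S$ along $k$ equals $(0,0,-1)$, since such a normal lies in the direction space of $\pi$.

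Then I would assemble the conclusion. The oriented tangent planes of $\Phi$ along $k$ form a one-parameter family whose isotropic image is an arc of a conic $C^i\subset\Phi^i$ containing the point that corresponds to the tangent plane of $\Phi$ at $\mathbf{r}$; because these planes are simultaneously the tangent planes of $Z$ and of $S$ along $k$, the same arc $C^i$ arises from both. Applying Proposition~\ref{prop-offset-ruled} to $Z$ shows that $C^i$ satisfies condition~$(R)$, and applying Proposition~\ref{prop-channel} to $S$ shows that $C^i$ satisfies condition~$(T)$. Parametrize $C^i$ by~\eqref{eq-conj-reciprocal} with coefficients $a,b,u,v,x,y,z$; since an arc of $C^i$ lies in the graph of $f$ it has contact order at least $2$ there, so by Proposition~\ref{prop-tangency-order} the coefficients $a,b,z$ are given by~\eqref{eq-prop-tangency-order2}. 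Now Proposition~\ref{prop-conic-plane} (using condition~$(R)$) yields~\eqref{eq-conj-characterization1zero} and~\eqref{eq-prop-conic-plane}, while Proposition~\ref{prop-conic-cone} (using condition~$(T)$ together with $C^i\subset\Phi^i$) yields~\eqref{eq-prop-conic-cone}. Thus~\eqref{eq-prop-conic-plane} and~\eqref{eq-prop-conic-cone} have the common real solution $(u,v)$, and $(u,v)\ne(0,0)$ because $(0,0)$ is incompatible with~\eqref{eq-conj-characterization1zero}. As noted, the main obstacle is the cylinder construction in the second step, with the orientation and axis-direction checks needed to invoke Proposition~\ref{prop-offset-ruled} in its ``inwards normals, non-horizontal axis'' form; everything afterwards is a direct combination of the previously established statements.
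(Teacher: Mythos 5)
Your proposal is correct and follows exactly the route the paper intends: the paper derives Corollary~\ref{cor-pipe} from the single observation that a pipe surface is also an offset of a ruled surface, combined with Propositions~\ref{prop-offset-ruled}--\ref{prop-conic-cone}. Your explicit construction of the tangent cylinder from the great circle of tangency (including the orientation and axis-direction checks) is precisely the detail the paper leaves implicit, and it is carried out correctly.
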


\begin{problem} \label{prob-widely-open} Are the reciprocal assertions in Corollaries~\ref{cor-offset-ruled}, \ref{cor-channel}, \ref{cor-pipe} true (if ``each point'' is replaced by ``a generic point'')?
\end{problem}

\section{Results and applications in CNC machining}\label{sec-results}

In this section, we show how the proposed analysis of third order contact can be used in the context of 5-axis flank CNC machining with
conical tools. First, we test our algorithm on an exact envelope, showing that we reconstruct the generators of the envelope.

\begin{example} \emph{Reconstruction of an exact generator}.
In the isotropic space, consider the graph of the  function $f(x,y)=\frac{y^2}{x^2+y^2}$. The graph contains a family of isotropic circles whose top views are Euclidean circles passing through the origin $(0,0)$ and having radius  $\frac{1}{2}\cot\theta$. They are the stereographic projections of circles of intrinsic radius $\frac{\pi}{2} - \theta$ passing through $(0,0,1)$ on the unit sphere.
In the design space, they correspond to a motion of a cone with the opening angle $\theta=30^{\circ}$, see Fig.~\ref{fig:IsotropicF}. We validated our approach by reconstructing the exact generator.
Observe that there are two positions of the generating cone (as there are two isotropic circles passing through the point of the graph). One of the generating cones is shown in yellow in Fig.~\ref{fig:IsotropicF}.

 \begin{figure}[!tb]
\vrule width0pt \hfill
\begin{overpic}[width=0.32\columnwidth]{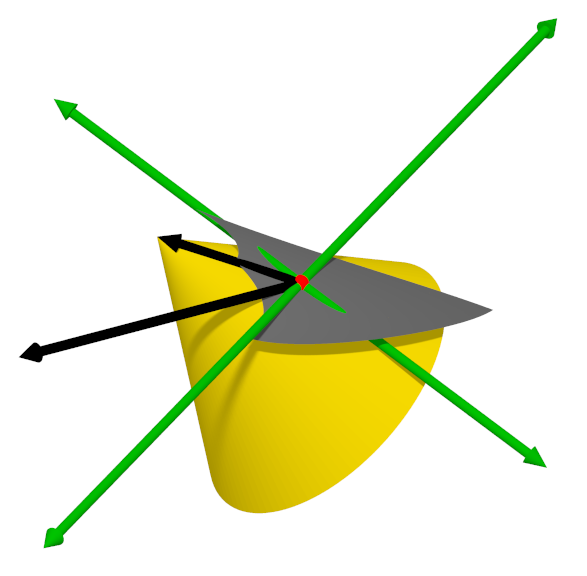}
{\small
	\put(0,-2){(a)}
	\put(23,57){$v$}
	\put(80,48){$\Phi$}
  }
\end{overpic}
\hfill
\begin{overpic}[width=0.32\columnwidth]{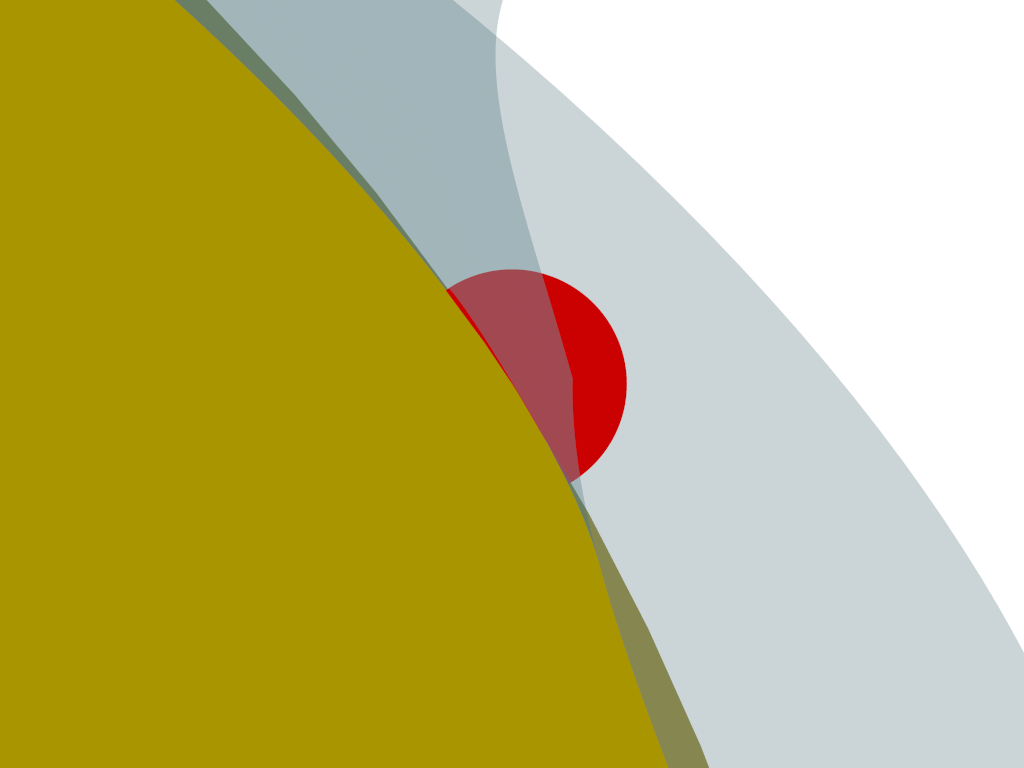}
{\small
  \put(-8,-2){(b)}
  }
\end{overpic}
\hfill\vrule width0pt\\
\vspace{-12pt}
\caption{Reconstruction of an exact envelope. (a) An exact envelope $\Phi$ (dark) is generated from the function $f(x,y)=\frac{y^2}{x^2+y^2}$ by applying \eqref{eq-cor1}. The candidate tangent directions, i.e., the rulings, where third order contact with osculating cones occurs, are computed via~\eqref{eq-simplified-system},  \eqref{eq-cor1}, and~\eqref{eq-prop-vertex}. The endpoints of the vectors correspond to the vertices of the hyperosculating cones. Our algorithm detects six positions with third order contact (green), including two rulings (black) that correspond to the exact generators. One generating cone (yellow) with the opening angle $\theta=30^{\circ}$ is shown.
(b) A zoom-in view from the vertex $v$ of the cone. Observe the perfect local match between the cone and the surface (rendered in transparent) in a neighborhood of the contact point (red).}
\label{fig:IsotropicF}
\end{figure}

\end{example}

To further validate our approach, we tested to what extent one may have inexact data, and yet reconstruct the exact solution.

\begin{example}
\emph{Stability}. We conducted a stability test as follows. We took the exact envelope generated from the function $f(x,y)=\frac{y^2}{x^2+y^2}$ by applying~\eqref{eq-cor1}, perturbed the sampled tangent planes, and mapped these planes back to the isotropic space, see Fig.~\ref{fig:Stability}. The tangent plane perturbation was achieved by adding a random noise to the surface normals as follows. Let $\lbrace\bd_1,\bd_2,\bn\rbrace$ be an orthonormal frame at a contact point, $\bn$ being the unit normal. We define $\bv = \alpha_1 \bd_1 + \alpha_2 \bd_2$ and the perturbed normal as $\bnt=\frac{\bn+\bv}{\Vert \bn+\bv \Vert}$, where $\alpha_1 = r \cos(\phi)$, $\alpha_2 = r \sin(\phi)$. The angle $\phi$ is randomly sampled from $[-\pi, \pi]$ and the random deviation is controlled via the parameter $r$ which is set to $r=0.1$ in the example shown in Fig.~\ref{fig:Stability}(a). The reconstruction of the isotropic circles from the exact and perturbed data are shown Fig.~\ref{fig:Stability}(b) and reconstruction of the hyperosculating cones is shown in Fig.~\ref{fig:Stability}(c).

 \begin{figure}[!tb]
\vrule width0pt \hfill
\begin{overpic}[width=0.39\columnwidth]{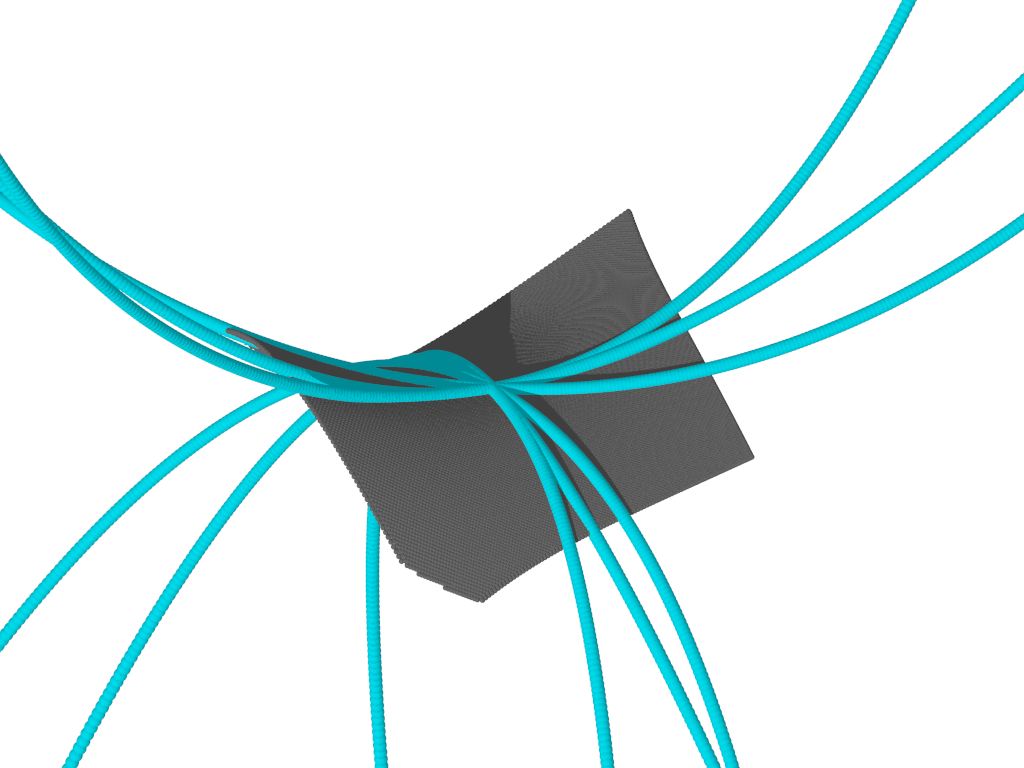}
{\small
\put(20,50){\fcolorbox{gray}{white}{\includegraphics[width=0.15\textwidth]{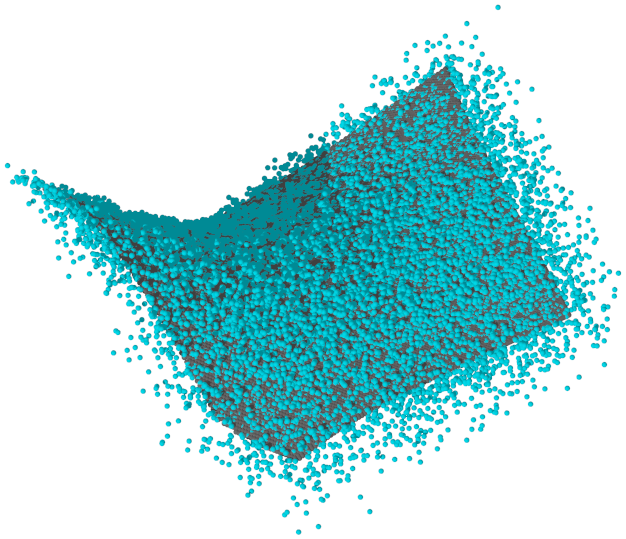}}}
\put(73,25){$\Phi^i$}
  \put(0,-2){(a)}
  }
\end{overpic}
\hfill
\begin{overpic}[width=0.17\columnwidth]{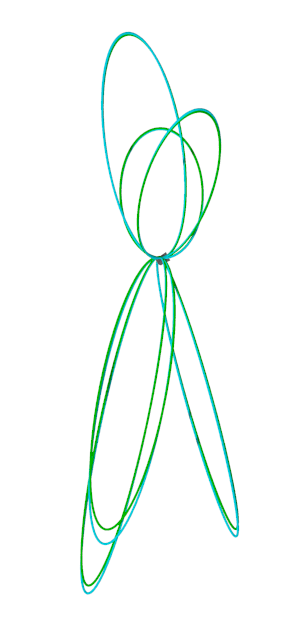}
{\small
	\put(0,-2){(b)}
	\put(30,55){$\Phi^i$}
  }
\end{overpic}
\hfill
\begin{overpic}[width=0.35\columnwidth]{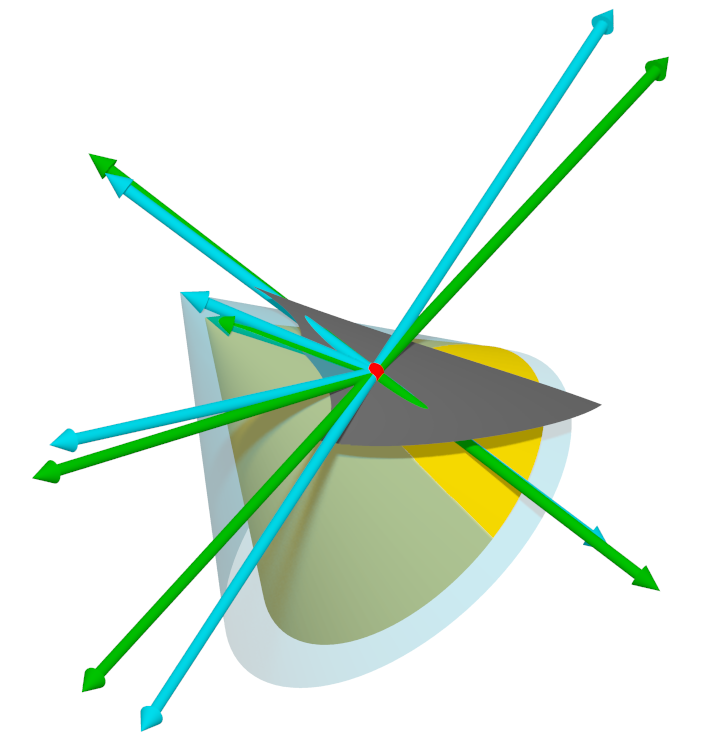}
{\small
  \put(0,-2){(c)}
  }
\end{overpic}
\hfill\vrule width0pt\\
\vspace{-12pt}
\caption{Stability. (a) The tangent planes
of the surface $\Phi$ were generated by the function  $f(x,y)=\frac{y^2}{x^2+y^2}$,  perturbed, and mapped back to the isotropic space, returning a noisy point cloud (top framed). The six hyperosculating isotropic circles arising from the noisy data are shown. (b) A zoom-out of the isotropic circles arising from the exact data (green) and from the noisy one (blue). (c) The situation in the design space: the hyperosculating tangent directions from exact (green) and perturbed (blue) data. Two hyperosculating cones that correspond to the surface generator (yellow) and its approximation from noisy data (transparent) are shown.}
\label{fig:Stability}
\end{figure}

\end{example}

\begin{example}
\emph{Industrial benchmark}. The hyperosculating configurations, see Fig.~\ref{fig:Impeller}, can be used for initialization of path-planning algorithm of 5-axis flank CNC machining with conical milling tools \cite{Calleja-2018-FlankMillConicalTools}. Observe that only some hyperosculating cones can be used as candidates for the tool position due penetration of the cone with the neighboring blades. A sequence of positions at several hyperbolic contact points is shown in Fig.~\ref{fig:Impeller}(h). However, the distance between the vertex of the cone and the contact point, i.e., the tool size, varies. These issues as well as selection of a suitable opening angle go beyond this paper.

 \begin{figure}[!tb]
\vrule width0pt \hfill
\begin{overpic}[width=0.31\columnwidth]{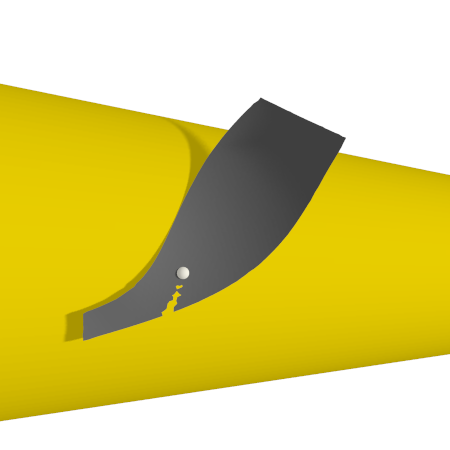}
{\small
\put(5,55){\fcolorbox{gray}{white}{\includegraphics[width=0.12\textwidth]{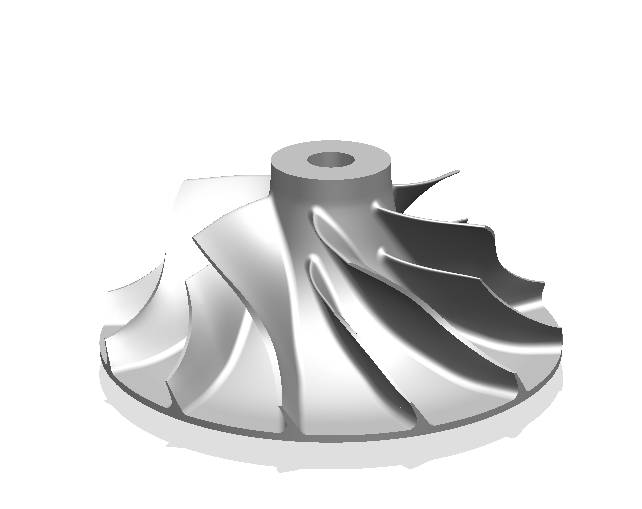}}}
  \put(0,2){(a)}
  }
\end{overpic}
\hfill
\begin{overpic}[width=0.31\columnwidth]{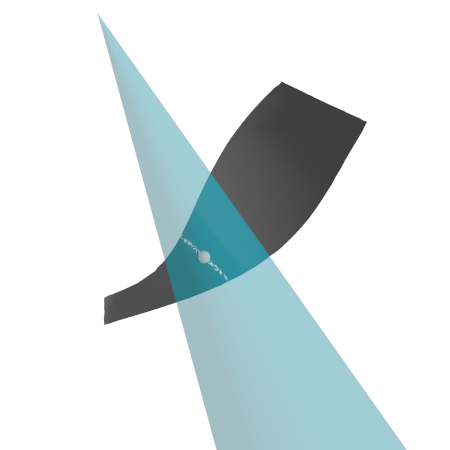}
{\small
	\put(0,2){(b)}
  }
\end{overpic}
\hfill
\begin{overpic}[width=0.31\columnwidth]{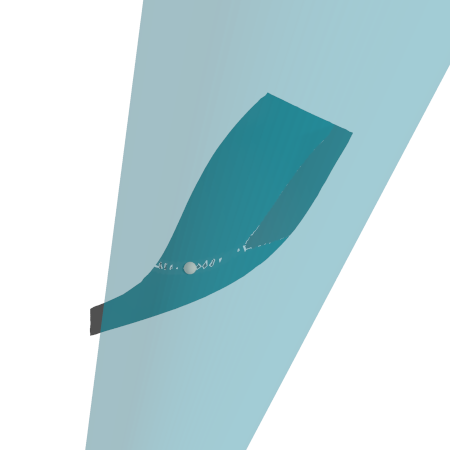}
{\small
  \put(0,2){(c)}
  }
\end{overpic}
\hfill\vrule width0pt\\
\vrule width0pt \hfill
\begin{overpic}[width=0.31\columnwidth]{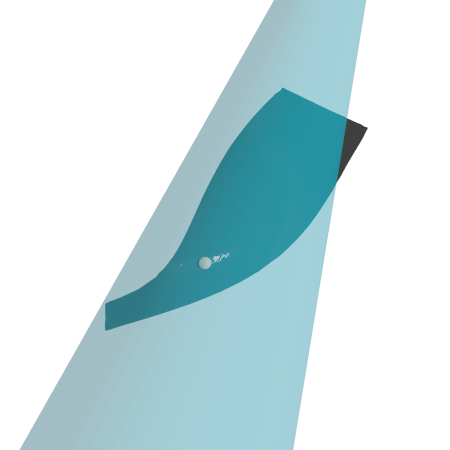}
{\small
  \put(0,2){(d)}
  }
\end{overpic}
\hfill
\begin{overpic}[width=0.31\columnwidth]{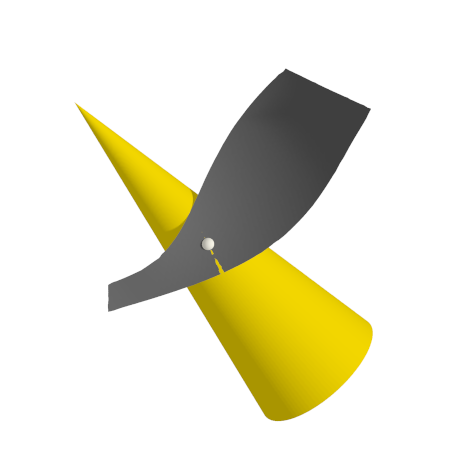}
{\small
	\put(0,2){(e)}
  }
\end{overpic}
\hfill
\begin{overpic}[width=0.31\columnwidth]{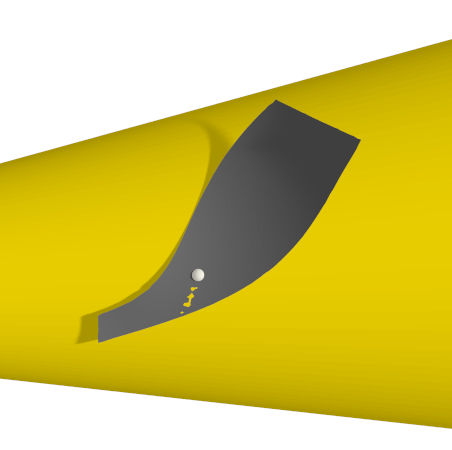}
{\small
  \put(0,2){(f)}
  }
\end{overpic}
\hfill\vrule width0pt\\
\vrule width0pt \hfill
\begin{overpic}[width=0.53\columnwidth]{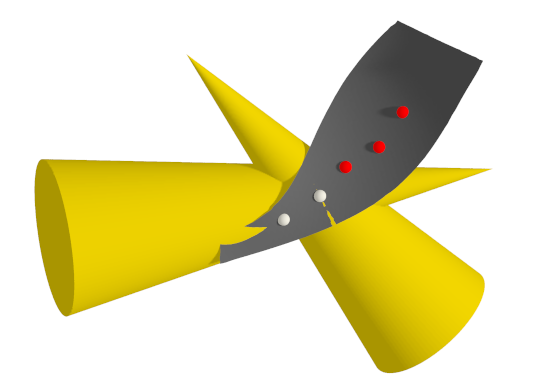}
{\small
  \put(0,2){(g)}
  }
\end{overpic}
\hfill
\begin{overpic}[width=0.42\columnwidth]{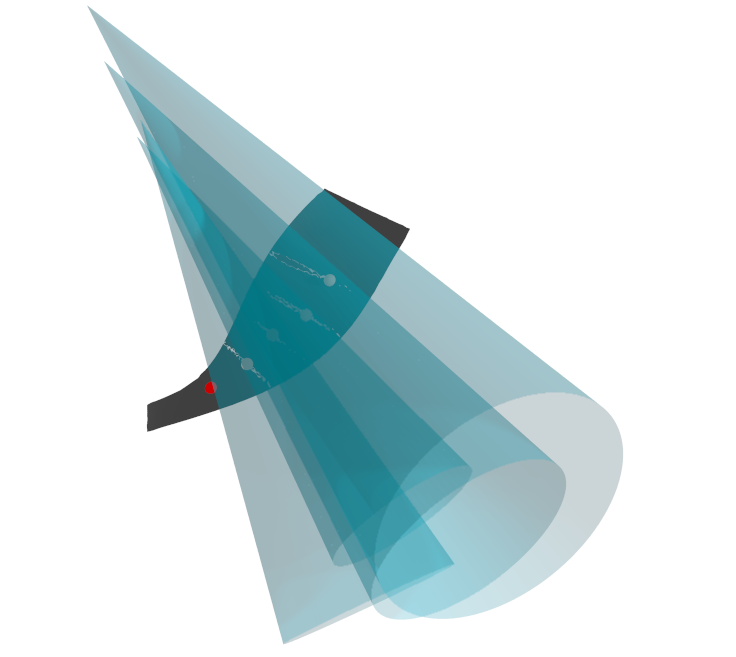}
{\small
	\put(0,2){(h)}
  }
\end{overpic}
\hfill\vrule width0pt\\
\vspace{-12pt}
\caption{Industrial data. (a-f) Six positions of a cone with a specific opening angle $\theta=10^{\circ}$ that hyperosculates a blade of the impeller (framed) at contact point (white) are shown. The yellow cones approximate the blade ``from-below'', the transparent cones ``from-above''. Due to collision with other blades, only configurations (b) and (e) can be considered for flank CNC machining. (g) At five, user-selected, contact points, the hyperosculating cones are computed and suitable cones are shown. The red points indicate that there are no suitable hyperosculating cones from-below. (h) The sequence of hyperosculating cones from-above.}
\label{fig:Impeller}
\end{figure}

\end{example}

\section{Conclusion and future research}\label{sec-conclu}

We have derived necessary and sufficient conditions on a surface to be an  envelope of a one-parameter family of
congruent rotational cones. Such a surface can be milled by flank CNC machining with an appropriate conical tool
in a single trace (provided that the motion is collision free and technical constraints on available tool sizes,
machine workspace etc. are fulfilled as well).
This characterization comes in form of nonlinear PDEs.
On our way towards that, we discussed similar PDEs for developables surfaces and ruled surfaces, and for completeness,
we addressed envelopes of cylinders and spheres as well.

The derivation of our results uses contact order in the space of planes and the isotropic model of Laguerre
geometry. It also led to conditions for higher order contact between rotational cones and a general surface,
but contact is meant here in the space of planes. Contact in the standard point sense would not be useful anyway.
We have shown (Section \ref{ssec-contact-order}) that contact order in the space of planes is the right
concept to get higher order contact between a surface generated by a conical (or cylindrical) tool and the
target surface to be machined.

Finally, we provided some initial computational results which indicate that the concepts are useful for the
development of algorithms for path planning in 5-axis flank CNC machining with conical tools. This is the
part where future research could continue. The high order contact positions found according to our results
should serve as appropriate initial tool positions for further optimization. A main research goal to be addressed
is a complete coverage of a given design surface by well fitting envelopes of a moving tool, keeping the
so-called scallop heights between neighboring machined strips as small as possible. Ideally, one could try
to obtain scallop height free results in the sense that neighboring machined strips join smoothly.
 Even direct surface design could be guided by the fabrication with a certain technology, especially when
 very high accuracy is required. This would amount to the design of surfaces composed of surface strips which
 can be produced precisely with a certain technology.

\section*{Acknowledgements}
Three of the four coauthors are grateful to King Abdullah University of Science and Technology, where they met altogether and started this project.
The authors are also grateful to R.~Bryant, S.~Ivanov, and A.~Skopenkov for useful discussions.
The first author has been supported within the framework of the Academic Fund Program at the National Research University Higher School of Economics (HSE) in 2018-2019 (grant N18-01-0023) and by the Russian Academic Excellence Project ``5-100''.
The second author has been partially supported by the National Natural Science Foundation of China (61672187) and the Shandong Provincial Key R$\&$D Program (2018GGX103038).
The third author has been partially supported by
Spanish Ministry of Science, Innovation and Universities: Ram\'{o}n y Cajal with reference RYC-2017-22649 and the European Union’s Horizon 2020 research and innovation programme under agreement No. 862025.




\bibliographystyle{amsplain}
\bibliography{envelopes,Machining}

\end{document}